\title{Auslander–Yorke Dichotomy and Its Generalizations for Non-Autonomous Dynamical Systems}
\author{%
  Saksham Malik\thanks{Department of Mathematics, University of Delhi, Delhi-110007, India.
    \newline\href{mailto:sakshamalik@gmail.com}{sakshamalik@gmail.com}}%
  \and
  Mohammad Salman\footnote{Corresponding author} \thanks{Shyama Prasad Mukherji College for Women, Department of Mathematics,  University of Delhi, Delhi-110026, India.
    \newline\href{mailto:salman25july@gmail.com}{salman25july@gmail.com}}%
  \and
  Ruchi Das\thanks{Department of Mathematics, University of Delhi, Delhi-110007, India.
    \newline\href{mailto:rdasmsu@gmail.com}{rdasmsu@gmail.com}}%
}
\date{}
\begin{document}
\maketitle
\begin{abstract}
    We investigate the dynamics of periodic non-autonomous discrete dynamical systems on uniform spaces and topological spaces, focusing on the extension of the classical Auslander–Yorke dichotomy to these settings. We prove various dichotomy theorems in the uniform-space framework, showing that a minimal periodic non-autonomous system is either sensitive or equicontinuous, and prove some more refined versions involving syndetic equicontinuity and thick sensitivity and eventual sensitivity versus equicontinuity on compact uniform spaces. We further introduce topological analogues like topological equicontinuity, Hausdorff sensitivity, and their syndetic and multi-sensitive variants and prove corresponding Auslander–Yorke–type dichotomies on \(T_3\) spaces.
\end{abstract}

\newtheorem{theorem}{Theorem}[section]
\newtheorem{lemma}[theorem]{Lemma}
\newtheorem{proposition}[theorem]{Proposition}
\newtheorem{corollary}[theorem]{Corollary}
\theoremstyle{definition}
\newtheorem{remark}[theorem]{Remark}
\newtheorem{definition}[theorem]{Definition}
\newtheorem{example}[theorem]{Example}

\newenvironment{proofsketch}
{\noindent\textbf{Proof:}}{\hfill$\square$\\[2mm]}
\newenvironment{construction}
  {\begin{proof}[Construction]}
  {\end{proof}}

\section{Introduction}
Dynamical systems theory provides a powerful framework to model and predict time-varying phenomena across various fields like epidemiology, climate science, economics and engineering\cite{SkiadasSkiadas2016,smith_thieme_2011,strogatz2018nonlinear}. In 1996, Kolyada and Snoha \cite{kolyada1996topological} introduced non-autonomous discrete dynamical systems. Non-autonomous dynamical systems are systems where the evolution depends on the current state as well as time and are therefore ideal for modeling real-world phenomena influenced by time-varying external forces or inputs. Non-autonomous dynamical systems have been widely applied in life sciences, population dynamics, and ecology \cite{caraballo_han_2016,KloedenPoeitzsche2013,  smith_thieme_2011} and in recent years, they have gained considerable traction in machine learning and deep learning \cite{NEURIPS2018_69386f6b,li_lin_shen_2023}.

 Properties of non-autonomous dynamical systems like chaos, stronger forms of sensitivity, topological transitivity, topological mixing, equicontinuity, entropy, etc. have been extensively explored in metric setting \cite{miralles2018sensitive,raghav2018finitely,salman2019dynamics,zhu2016devaney}. Recently, these properties have been explored in the more general  setting of uniform spaces \cite{Salman2021Sensitivity, shao2023chaos, wu2019rigidity}. In this paper, we consider a non-autonomous discrete dynamical system defined on a topological space $X$ without isolated points. Let $ \{f_n \}_{n \in \mathbb N}$ be a sequence of continuous functions $ f_n: X \rightarrow X, n \in \mathbb N$. The system evolves according to the difference equation:
\[
x_{n+1} = f_n(x_n), \quad n \in \mathbb{N}. \tag{1}
\]
Here, the right-hand side depends explicitly on the current time step. The trajectory of a point \(x \in X\) is obtained by successively applying the time-dependent maps \(f_n\). If \(f_n = f\) for all \(n \geq 1\), equation (1) reduces to the classical autonomous discrete dynamical system \((X, f)\).

The Auslander–Yorke dichotomy, introduced by Joseph Auslander and James A. Yorke in \cite{auslander1980interval}, is an important result in classical autonomous dynamical systems which shows that  minimal systems are either sensitive or equicontinuous. There have been several generalizations and extensions of Auslander-Yorke dichotomy for autonomous systems. Auslander, Akin and Berg \cite{akin1997transitive} proved a similar dichotomy for transitive systems showing that transitive autonomous systems are either sensitive or almost equicontinuous, and Huang, Kolyada and Zhang \cite{huang2018analogues} proved a dichotomy for minimal autonomous systems based on thick sensitivity and syndetic equicontinuity. Some more Auslander-Yorke dichotomy type theorems are obtained in \cite{ye2016sensitivity} involving \(\mathcal{F}\)-sensitivity and \(\mathcal{F}\)-equicontinuity for various Furstenberg families \(\mathcal{F}\). Recently, there have been efforts to generalize uniform properties of dynamical systems like sensitivity and equicontinuity to the more general topological space setting through properties like Hausdorff sensitivity, topological sensitivity and topological equicontinuity \cite{good2020equicontinuity,good2018what}. Some Auslander-Yorke dichotomy type theorems based on these topological generalizations are obtained for autonomous systems in \cite{good2020equicontinuity}.

Due to inherent complexities in general non-autonomous dynamical systems, researchers have focused on specific subclasses, particularly periodic non-autonomous dynamical systems to study properties like minimality,  and various forms of sensitivity \cite{raghav2018finitely,Salman2021Sensitivity}. Though considerable progress has been made on this front, transitive points have received comparatively less attention despite its important connections with various forms of sensitivity and equicontinuity. This motivated us to investigate the structure of transitive points in periodic non-autonomous systems to deepen our understanding of equicontinuity and sensitivity and thus establish various Auslander–Yorke dichotomy type theorems. Periodic non-autonomous dynamical systems effectively capture real-world processes with intrinsic periodicity and are therefore widely employed in climate forecasting, mechanical systems under cyclic forcing, seasonal financial time series, and seasonal population dynamics. The results derived in this paper may have direct applications in these fields.

This paper is organized into 4 sections. In Section 2, we present basic definitions and concepts used throughout the paper. In Section 3, we show that a periodic non-autonomous dynamical system has the same transitive points as its induced autonomous system under the condition that the set of transitive points of the induced autonomous system is non-empty. Under the same condition, we show that the uniform space analogues of the Auslander-Yorke dichotomy hold (Theorem \ref{T3}, Theorem \ref{T6} and Theorem \ref{ES}). We provide examples to show that without the aforementioned condition the above theorems need not hold. We also construct an example to show that even on a connected space $X$, it is possible that a periodic non-autonomous dynamical system $(X,f_{1,\infty})$ is minimal while its induced autonomous dynamical system $(X,g)$ has no transitive points, disproving the result \cite[Proposition 2]{raghav2018finitely} which claimed that a periodic non-autonomous system $(X,f_{1,\infty})$ is minimal if and only if its induced autonomous system $(X,g)$ is minimal, given $X$ is connected. In Section 4, we define topological equicontinuity, syndetic topological equicontinuity, Hausdorff sensitivity, thick Hausdorff sensitivity and multi-Hausdorff sensitivity for non-autonomous dynamical systems. Analogues of the Auslander-Yorke dichotomy type theorems proved in Section 3 are shown to hold in the more general setting of topological spaces (Theorem \ref{TAY1} and Theorem \ref{TAY2}).   

\section{Preliminaries}
In this section, we present some definitions that are required for the rest of the paper. Let $\mathbb N$ denote the set of natural numbers and $\mathbb Z^+=\{0\} \cup \mathbb N$. Throughout the paper $X$ is assumed to be a topological space without isolated points, and additional conditions on the space $X$ are explicitly stated whenever required. A topological space $X$ is called regular if for every closed set $F \subseteq X$ and every point $x \in X$ with $x \notin F$, there exist non-empty open sets $U,V$ of $X$ such that $x \in U, 
\ 
F \subseteq V, 
\
\text{and} 
\ 
U \cap V = \emptyset$. A regular Hausdorff space is called a $T_3$ space. For $x\in X$, let $\mathcal{N}_x$ denote the neighborhood basis of $x$. Let us denote $f_{1,\infty} := \{f_n\}_{n=1}^{\infty}$ and $f_i^k=f_{i+k-1} \circ \cdots \circ f_{i+1} \ \circ f_i$. A non-autonomous discrete system $(X, f_{1,\infty})$ is said to be periodic if there exists an $l \in \mathbb{N}$ such that $f_{n+lm}(x) = f_n(x)$, for all $x \in X,\ m \in \mathbb{N}$, and $1 \leq n \leq l$ and the least such $l$ is said to be the period of $(X, f_{1,\infty})$. For a periodic non-autonomous discrete system $(X, f_{1,\infty})$ with period $p$, the induced autonomous system is $(X, g)$, where $g:=f^{\,p}_{1}=f_{p}\circ f_{p-1}\circ\cdots\circ f_{1}$. For any $x \in X$, the orbit of $x$, denoted by $\mathcal{O}_{f_{1,\infty}}(x)$, is the set $\mathcal{O}_{f_{1,\infty}}(x) := \left\{ f^n_1(x) : n \in \mathbb Z^+ \right\}$, where  $f^0_1 = \text{id}$ is the identity map on $X$. A point $x\in X$ is called a transitive point if $\overline{\mathcal{O}_{f_{1,\infty}}(x)}=X$ and the set of transitive points of $X$ is denoted by $ \mathrm{Trans}(X,f_{1,\infty})$. The $\omega$-limit set of $x \in X$ is defined as $\omega_{f_{1,\infty}}(x) := \bigcap_{N \in \mathbb{N}} \overline{ \{ f^n_1(x) : n > N \} }$. We denote the non-wandering set of $x\in X$ by $\Omega_{f_{1,\infty}}(x)$ and $y \in \Omega_{f_{1,\infty}}(x)$ if and only if for any $U \in \mathcal{N}_x$,  $V \in \mathcal{N}_y$ and $N \in \mathbb{N}$, there exist an $n>N$ such that $f_1^n(U)\cap V\neq \emptyset$.\\

\begin{definition}\cite{kelley1955general}
A uniform structure on a set $X$ is a non-empty collection $\mathcal{U}$ of subsets of $X \times X$ (called \emph{entourages}) satisfying:

\begin{enumerate}
  \item $\mathcal{U} \subseteq \mathcal{D}_X$, where $\mathcal{D}_X$ denotes all subsets of $X \times X$ containing the diagonal $\Delta_X = \{(x, x) : x \in X\}$;
  \item If $D_1 \in \mathcal{U}$ and $D_1 \subseteq D_2 \in \mathcal{D}_X$, then $D_2 \in \mathcal{U}$;
  \item For all $D_1, D_2 \in \mathcal{U}$, we have $D_1 \cap D_2 \in \mathcal{U}$;
  \item For every $D \in \mathcal{U}$, there exists an $E \in \mathcal{U}$ such that $E \circ E \subseteq D$, where $E \circ E := \{(x, z) \in X \times X : \text{there exists a } y \in X \text{ with } (x, y) \in E \text{ and } (y, z) \in E \}$.
\end{enumerate}

The pair $(X, \mathcal{U})$ is called a \emph{uniform space}. The \emph{uniform topology} $|\mathcal{U}|$ on $X$ is generated by basic neighborhoods of the form $D[x] := \{ y \in X : (x, y) \in D \}$, for $D \in \mathcal{U}$. A uniform space $(X, \mathcal{U})$ is said to be Hausdorff if and only if the intersection of all entourages is the diagonal i.e., $\bigcap \mathcal{U} = \Delta_X$.
\end{definition}

\begin{definition}\cite{tian2006chaos}
Let $(X, f_{1,\infty})$ be a non-autonomous dynamical system. Then  $(X, f_{1,\infty})$ is topologically transitive if, for every pair of non-empty open sets $U,V$ of $X$, there exists an $n \in \mathbb{N}$ such that $f^n_1(U) \cap V \neq \emptyset$.
\end{definition}

\begin{definition}\cite{Willard1970}
Let $(X, \mathcal{U})$ and $(Y, \mathcal{V})$ be uniform spaces, and let $\mathcal{F} $ be a family of maps from $X$ to $Y$. The family $\mathcal{F}$ is said to be \emph{equicontinuous} at a point $x \in X$ if for every entourage $D \in \mathcal{V}$, there exists an entourage $E \in \mathcal{U}$ such that for all $y \in X$ with $(x, y) \in E$ and for all $f \in \mathcal{F}$, we have $(f(x), f(y)) \in D.$ The family is \emph{equicontinuous on $X$} if it is equicontinuous at every point $x \in X$.
\end{definition}

\begin{definition}\cite{LuChen2017}
\leavevmode
\begin{itemize}
  \item A set $T \subseteq \mathbb{N}$ is \emph{thick} if for all $ k \in \mathbb{N}$, there exists an  $ n \in \mathbb{N}$ such that $\{n, n+1, \dots, n+k\} \subseteq T$.

  \item A set $S \subseteq \mathbb{N}$ is \emph{syndetic} if there exists an $M \in \mathbb{N}$ such that for all $ n \in \mathbb{N}$, $\{n,n+1,..., n+M\} \cap S \neq \emptyset$.
\end{itemize}
\end{definition}

\begin{definition}\cite{Salman2021Sensitivity}
Let $(X, f_{1,\infty})$ be a non-autonomous dynamical system on a uniform space $(X, \mathcal{U})$. For an entourage $D \subseteq X \times X$ and a subset $U \subseteq X$, define 
\[N_{f_{1,\infty}}(U, D) := \left\{ n \in \mathbb{N} : \text{there are }\, x, y \in U \text{ satisfying } (f_1^{n}(x), f_1^{n}(y)) \notin D \right\}.\] Then:
\begin{enumerate}
  \item The system is \emph{sensitive} if there exists an entourage $D \in \mathcal{U}$ (called a \emph{sensitive entourage}) such that for every non-empty open set $U \subseteq X$, we have $N_{f_{1,\infty}}(U, D) \neq \emptyset$.
  \item It is \emph{syndetically sensitive} if there exists an entourage $D \in \mathcal{U}$ such that for every non-empty open set $U \subseteq X$, the set $N_{f_{1,\infty}}(U, D)$ is \emph{syndetic}.
  \item It is \emph{multi-sensitive} if there exists an entourage $D \in \mathcal{U}$ such that for any $k \in \mathbb{N}$ and any collection of non-empty open sets $U_1, \dots, U_k \subseteq X$, we have $\bigcap_{i=1}^k N_{f_{1,\infty}}(U_i, D) \neq \emptyset.$
  
\end{enumerate}
\end{definition}

\begin{definition}\cite{salman2025topological}
Let $(X, f_{1,\infty})$ be a non-autonomous dynamical system, and let $\mathcal{U}$ be an open cover of $X$. For any subset $V \subseteq X$, define:
\[
N_{f_{1,\infty}}(V, \mathcal{U}) := \left\{ n \in \mathbb{N} : \text{there are }\, u, v \in V \text{ satisfying } (u, v) \notin \bigcup_{U \in \mathcal{U}} f_1^{-n}(U) \times f_1^{-n}(U) \right\}.
\]

Then:
\begin{enumerate}
  \item $(X, f_{1,\infty})$ is called \emph{topologically sensitive} if there exists an open cover $\mathcal{U}$ of $X$, called a \emph{sensitivity cover} (\emph{s-cover}),  such that for every non-empty open set $V$ of $X$, $N_{f_{1,\infty}}(V, \mathcal{U}) \neq \emptyset.$ 

  \item $(X, f_{1,\infty})$ is called \emph{syndetically topologically sensitive} if there exists an open cover $\mathcal{U}$ of $X$, called a \emph{syndetic sensitivity cover} (\emph{ss-cover}), such that for every non-empty open set $V$ of $X$, $N_{f_{1,\infty}}(V, \mathcal{U})$ is syndetic.
  
  \item $(X, f_{1,\infty})$ is called \emph{thickly  topologically sensitive} if there exists an open cover $\mathcal{U}$ of $X$, called a \emph{thick sensitivity cover} (\emph{ts-cover}), such that for every non-empty open set $V$ of $X$, $N_{f_{1,\infty}}(V, \mathcal{U})$ is thick.

  \item $(X, f_{1,\infty})$ is called \emph{multi-topologically sensitive} if there exists an open cover $\mathcal{U}$ of $X$, called a \emph{multi-sensitivity cover} (\emph{ms-cover}), such that for every collection of non-empty open sets $V_1, \dots, V_m$ of $X$, we have $\bigcap_{i=1}^m N_{f_{1,\infty}}(V_i, \mathcal{U}) \neq \emptyset.$
  
\end{enumerate}
\end{definition}

\section{Auslander-Yorke Dichotomy and its Analogues on Uniform Spaces.}

In this section, we introduce the notion of syndetic equicontinuity and eventual sensitivity for non-autonomous dynamical systems. We show that under some conditions various forms of Auslander-Yorke dichotomy hold for a periodic non-autonomous dynamical system, characterizing minimal periodic non-autonomous system in terms of properties like sensitivity and equicontinuity, thick sensitivity and syndetic equicontinuity, etc. We also present examples to show that these conditions are important.

\begin{definition}
Let \(\bigl(X, f_{1,\infty}\bigr)\) be a non-autonomous dynamical system on a uniform space \((X,\mathcal U)\).  \(\bigl(X, f_{1,\infty}\bigr)\)  is called \emph{syndetically equicontinuous} at a point $ x\in X$, if for every entourage \(E\in\mathcal U\) there exists a neighborhood $U$ of $x$ such that \(J_{f_{1,\infty}}(U,E)=\{n \in \mathbb N :\text{for all } x_1,x_2\in U , 
\bigl(f_1^n(x_1),\,f_1^n(x_2)\bigr)\;\in\;E\}\) is syndetic.\\
We denote by \(\mathrm{Eq}_{syn}\bigl(X,f_{1,\infty}\bigr)\) the set of all syndetically equicontinuous points.  The system \((X,f_{1,\infty})\) is \emph{syndetically equicontinuous} if
\(\mathrm{Eq}_{syn}(X,f_{1,\infty})=X\).
\end{definition}

\begin{remark}
    Clearly, if \(\bigl(X, f_{1,\infty}\bigr)\) is  equicontinuous then \(\bigl(X, f_{1,\infty}\bigr)\) is syndetically equicontinuous. In Example \ref{E4}, we show that the Sturmian subshif $(X_\alpha,\sigma)$ is syndetically equicontinuous but not equicontinuous which shows that the syndetic equicontinuity need not imply equicontinuity.
\end{remark}

\begin{definition}
Let \(\bigl(X,f_{1,\infty}\bigr)\) be a non-autonomous dynamical system on a uniform space \((X,\mathcal U)\).  We say that the system \(\bigl(X,f_{1,\infty}\bigr)\) is \emph{eventually sensitive} if there exists an \emph{eventual-sensitivity entourage} \(D\in\mathcal U\) such that
for all $x\in X$ and  $E\in\mathcal U$, there exist
$n,k\in\mathbb N$ and $y\in E\bigl[f_1^n(x)\bigr]$
 with $
\bigl(f_1^{\,n+k}(x),\,f_{1}^k(y)\bigr)\;\notin\;D.$
\end{definition}
\begin{remark}
    Clearly if \(\bigl(X, f_{1,\infty}\bigr)\) is sensitive then it is eventually sensitive. But eventual sensitivity need not imply sensitivity \cite[Example 5.2]{good2020equicontinuity}.
\end{remark}

The following theorem characterizes the transitive points of a periodic non-autonomous system whenever its induced autonomous map has a nonempty transitive set, providing the essential link needed to extend Auslander-Yorke dichotomy and its variations into the non-autonomous setting.

\begin{theorem}\label{T1}
    let \((X,f_{1,\infty})\) be a periodic non‑autonomous system such that \(\mathrm{Trans}(X,g)\neq\emptyset\), then $\mathrm{Trans}(X,g)\;=\;\mathrm{Trans}(X,f_{1,\infty})$.
\end{theorem}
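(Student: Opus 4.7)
The plan is to split the equality into two inclusions. The first, $\mathrm{Trans}(X,g)\subseteq\mathrm{Trans}(X,f_{1,\infty})$, is essentially free and does not use the hypothesis: since $g=f_1^p$ by definition, $f_1^{mp}(x)=g^m(x)$, so $\mathcal{O}_g(x)\subseteq\mathcal{O}_{f_{1,\infty}}(x)$ and denseness of the $g$-orbit forces denseness of the $f_{1,\infty}$-orbit.

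For the reverse inclusion, I would first extract the orbit decomposition that falls out of periodicity. Writing $n=mp+r$ with $0\le r<p$ and unfolding $f_1^n$ while repeatedly applying $f_{n+p}=f_n$, one checks $f_1^{mp+r}=f_1^r\circ g^m$, so
\[
\mathcal{O}_{f_{1,\infty}}(x)\;=\;\bigcup_{r=0}^{p-1}f_1^r(\mathcal{O}_g(x)),
\qquad\text{hence}\qquad
X\;=\;\bigcup_{r=0}^{p-1}\overline{f_1^r(\mathcal{O}_g(x))}
\]
whenever $x\in\mathrm{Trans}(X,f_{1,\infty})$. Setting $A:=\overline{\mathcal{O}_g(x)}$, which is closed and $g$-invariant, it will be enough to locate a single $y\in\mathrm{Trans}(X,g)\cap A$: once such a $y$ is found, $\overline{\mathcal{O}_g(y)}\subseteq A$ gives $A=X$, i.e., $x\in\mathrm{Trans}(X,g)$.

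Pick any $y\in\mathrm{Trans}(X,g)$ supplied by the hypothesis; the decomposition places $y\in\overline{f_1^{r^*}(\mathcal{O}_g(x))}$ for some $r^*\in\{0,\dots,p-1\}$. The case $r^*=0$ is immediate. For $r^*\ge1$, I would exploit a cyclic semiconjugacy: the map $B:=f_p\circ f_{p-1}\circ\cdots\circ f_{r^*+1}$ satisfies $B\circ f_1^{r^*}=g$, and a short periodicity computation shows $g\circ B=B\circ g_{r^*}$, where $g_{r^*}:=f_{r^*+1}^{\,p}$ is the induced autonomous map based at phase $r^*$. Taking any net $m_i$ with $f_1^{r^*}(g^{m_i}(x))\to y$ and pushing it through the continuous $B$ yields $B(y)=\lim g^{m_i+1}(x)\in A$. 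Iterating $g\circ B=B\circ g_{r^*}$ gives $\mathcal{O}_g(B(y))=B(\mathcal{O}_{g_{r^*}}(y))$, and since $B(X)\supseteq g(X)$ is dense (as $(X,g)$ is transitive), $B(y)\in\mathrm{Trans}(X,g)$ provided $y$ is also $g_{r^*}$-transitive. With such a $y$, we obtain $B(y)\in\mathrm{Trans}(X,g)\cap A$, closing the argument by contradiction with $A\ne X$.

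The main obstacle is producing a $y$ that is transitive for $g$ and simultaneously for every shifted map $g_r$, $r=1,\dots,p-1$. Each $(X,g_r)$ is topologically transitive because the dense-image semiconjugacy $f_1^r:(X,g)\to(X,g_r)$ carries $\mathrm{Trans}(X,g)$ into $\mathrm{Trans}(X,g_r)$. Under the mild topological regularity appropriate to the paper's uniform-space framework (Hausdorffness, no isolated points, and a Baire-type property that holds on every complete uniform space), each $\mathrm{Trans}(X,g_r)$ is a dense $G_\delta$ by a Birkhoff-type argument, so $\bigcap_{r=0}^{p-1}\mathrm{Trans}(X,g_r)$ is nonempty and any $y$ drawn from this intersection supplies the required contradiction.
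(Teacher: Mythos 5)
Your first half --- the easy inclusion, the decomposition $X=\bigcup_{r=0}^{p-1}\overline{f_1^r(\mathcal O_g(x))}$, and the reduction to finding a point of $\mathrm{Trans}(X,g)$ inside the closed, forward $g$-invariant set $A=\overline{\mathcal O_g(x)}$ --- agrees with the paper. The second half, however, has a genuine gap. In the case $r^*\ge 1$ your argument hinges on producing a point that is transitive for the shifted map $g_{r^*}=f_{r^*+1}^{\,p}$, which you justify by claiming that the semiconjugacy $f_1^{r}\colon(X,g)\to(X,g_{r})$ has dense image, so that each $(X,g_r)$ is transitive, and then invoking a Birkhoff/Baire argument to intersect the sets $\mathrm{Trans}(X,g_r)$. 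Neither step is available. The image $f_1^r(X)$ need not be dense, and $\mathrm{Trans}(X,g_r)$ can be empty even when $\mathrm{Trans}(X,g)\neq\emptyset$: take $X=[0,1]/(0\sim 1)$, $f_1(x)=\tfrac12\bigl(1-|2x-1|\bigr)$ (whose image is the arc $[0,1/2]$) and $f_2(x)=2x \bmod 1$; then $g=f_2\circ f_1$ is the tent map on the circle, which has transitive points, while $g_1=f_1\circ f_2$ maps $X$ into $[0,1/2]$, so every $g_1$-orbit closure misses an open arc, $\mathrm{Trans}(X,g_1)=\emptyset$, and $(X,g_1)$ is not even topologically transitive. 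So when your chosen $y$ happens to lie only in some $\overline{f_1^{r^*}(\mathcal O_g(x))}$ with $r^*\ge1$, there may be no admissible candidate at all, and the contradiction never gets off the ground.

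Moreover, even where the shifted systems are transitive, the dense-$G_\delta$ step requires a countable base and the Baire property, neither of which is assumed: the theorem is stated for an arbitrary topological space without isolated points, and completeness of a uniformity does not by itself yield a Baire space. The paper closes the argument by purely elementary means, which you could adopt for your case $r^*\ge1$: since $X$ has no isolated points, $\mathrm{Trans}(X,g)$ is dense as soon as it is non-empty; if $\mathrm{int}(\overline A)\neq\emptyset$, a transitive point inside $\overline A$ together with forward invariance forces $\overline A=X$; otherwise some $\overline{f_1^r(A)}$ has non-empty interior (finitely many closed nowhere dense sets cannot cover $X$), and pulling a small open subset of it back under $f_1^r$ yields a non-empty open $V$ with $g^n(V)\subseteq\overline A$ for all $n\ge1$, which contradicts the density of $\mathrm{Trans}(X,g)$ when $\mathrm{int}(\overline A)=\emptyset$.
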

\begin{proof}
 Let \emph{p} be the period of \((X,f_{1,\infty})\), then \(g:=f^{\,p}_{1}=f_{p}\circ f_{p-1}\circ\cdots\circ f_{1}\).
 Clearly, \(\mathrm{Trans}(X, g)\subseteq\mathrm{Trans}(X,f_{1,\infty})\)
as \(\mathcal{O}_ g(y)\subseteq\mathcal{O}_{f_{1,\infty}}(y)\), for each $y \in X$.  Let \(x\in\mathrm{Trans}(X,f_{1,\infty})\), then for each \(n\in \mathbb N\), writing \(n=p\,k+r\) with \(0\le r<p\), we have
\[
f_1^n(x)
= f_1^r\bigl(f_1^{p\,k}(x)\bigr)
= f_1^r\bigl( g^k(x)\bigr) 
\;\in\;
f_1^r(\mathcal{O}_ g(x)) \subseteq f_1^r(\overline{\mathcal{O}_ g(x)}), 
\]
and hence
\[
\mathcal{O}_{f_{1,\infty}}(x)
\;\subseteq\;
\bigcup_{r=0}^{p-1}{f_1^r\bigl(\mathcal{O}_ g(x)\bigr)}.
\]
Since \(x\in\mathrm{Trans}(X,f_{1,\infty})\), 
\[
X
\;=\;
\overline{\mathcal{O}_{f_{1,\infty}}(x)}
\;\ = \;
\bigcup_{r=0}^{p-1}\overline{f_1^r\bigl(\mathcal{O}_g(x)\bigr)}
.
\]
Set \(A=\mathcal{O}_ g(x)\), then \(\overline{A}\) is non-empty and closed.  

\medskip\noindent
\emph{Case 1: \(\mathrm{int}(\overline{A})\neq\emptyset\).}  Since \(\mathrm{Trans}(X, g)\neq\emptyset\), \(\mathrm{Trans}(X, g)\) is dense in X implying there exists \(z\in \overline{A}\ \cap \mathrm{Trans}(X, g) \). Therefore
\(\overline{\mathcal{O}_g(z)}=X\subseteq \overline{A}\), forcing \(\overline{A}=X\). Hence \(x\in\mathrm{Trans}(X, g)\).

\medskip\noindent
\emph{Case 2: \(\mathrm{int}(\overline{A})=\emptyset\).}  Then \(A\) is nowhere dense.  If possible assume that $\overline{f_1^r(A)}$ is nowhere dense for $r \in \{0,1,....,p-1\}$. Since $\overline{f_1^{p-1}(A)}$ is nowhere dense, $X\setminus\overline{f_1^{p-1}(A)}$ is dense and contained in $\bigcup_{r=0}^{p-2}\overline{f_1^r(A)}$ as $X\;=\;\bigcup_{r=0}^{p-1}\overline{f_1^r(A)}$ so we have 
\[X\;=\;\bigcup_{r=0}^{p-2}\overline{f_1^r(A)}.\] Continuing similarly we get $X=\overline{A}$, which is a contradiction as \emph{A} is nowhere dense.\\
So, there exists an $r \in \{0,1,....,p-1\}$, such that $\overline{f_1^r(A)}\neq\emptyset$. Let $U \subseteq X$ be a non-empty open set with $U\;\subseteq\;\operatorname{int}\bigl(\overline{f_{1}^{r}(A)}\bigr).$\\
By continuity of $f_{1}^{r}$, there is a non-empty open set $V\subseteq X$ such that $f_{1}^{r}(V)\;\subseteq\;U\;\subseteq\;\overline{f_{1}^{r}(A)}.$\\ Consequently, we get that
\[
g(V)
\;=\;
\bigl(f_{p}\circ\cdots\circ f_{r+1}\bigr)\bigl(f_{1}^{r}(V)\bigr)
\;\subseteq\;
\bigl(f_{p}\circ\cdots\circ f_{r+1}\bigr)\bigl(\overline{f_{1}^{r}(A)}\bigr)
\;\subseteq\;\overline{\bigl(f_{p}\circ\cdots\circ f_{1}\bigr)\bigl(A\bigr)}=
\overline{\,g(A)\,}
\;\subseteq\;
\overline{A}
\]
and $g^2(V) \;=\; g\bigl(g(V)\bigr)
\;\subseteq\;
g\bigl(\overline{A}\bigr)
\;\subseteq\;
\overline{g(A)}
\;\subseteq\;
A$. Hence for any $n\in \mathbb N$, we have $g^n(V)\subseteq \overline{A}$, but since \emph{V} is open in X, there exists $w\in V\cap\mathrm{Trans}(X,g)$ which is a contradiction as $\mathcal{O}_g(w)\subseteq \ \bigcup_{i=0}^\infty g^i(V)\subseteq \overline{A}$.\\
So we conclude that \(\mathrm{int}(\overline{A})\neq\emptyset\) and \(\overline{A}=X\), implying \(\overline{\mathcal{O}_g(x)}=X\) and \(x\in\mathrm{Trans}(X,g)\). Thus \(\mathrm{Trans}(X,f_{1,\infty})\subseteq\mathrm{Trans}(X,g)\), and equality follows. 
 
\end{proof}

\begin{example}\label{E1}
    If $X$ has isolated points then there exists a periodic non-autonomous system $(X,f_{1,\infty})$ such that $
\mathrm{Trans}(X,g)\subsetneq \mathrm{Trans}(X,f_{1,\infty})$ as constructed below.
\end{example}

Let $X=S^1 \cup  \{ (2,0), (3,0)\}$, where $S^1=\{e^{i\theta}: \theta \in [0,2\pi)\}$. We define $f_1: X \rightarrow X $ as follows  
\[f_1(x)=\begin{cases}
    e^{i(\theta+\alpha/2)}, & x=e^{i\theta} \in S^1,\\
    (3,0), & x=(2,0),\\
    (2,0), & x=(3,0).
\end{cases}\] 
and $f_2:X \rightarrow X$ as follows
\[f_2(x)=\begin{cases}
    e^{i(\theta+\alpha/2)}, & x=e^{i\theta} \in S^1,\\
    (1,0), & x=(2,0),\\
    (3,0), & x=(3,0).
\end{cases}\]
where $\alpha$ is an irrational multiple of $\pi$.\\
Clearly $f_1$, $f_2$ are continuous. Now, consider the 2-periodic non-autonomous dynamical system $(X,f_{1,\infty})$ generated by $\{f_1,f_2\}$, then $(X, g)$, where $g=f_2 \circ f_1$, is the induced autonomous dynamical system.\\
Observe that, $g|_{S^1}: e^{i\theta} \rightarrow e^{i(\theta +\alpha)}$ is the irrational rotation by $\alpha$, and $g((2,0))=(3,0)$ and $g((3,0))=(1,0)$. Now, $\overline{(\mathcal{O}_g((1,0)))}=S^1$, we know that $g((2,0))=(3,0)$ and $g^2((2,0))=(1,0)$ implying that $\overline{(\mathcal{O}_g((1,0)))}\subseteq \overline{(\mathcal{O}_g((2,0)))}$ and therefore $\overline{(\mathcal{O}_g((2,0)))}= S^1 \cup \{(2,0), (3,0)\}=X$ which implies that $(2,0)\in \mathrm{Trans}(X,g) $. \\Also,  $\overline{(\mathcal{O}_g((3,0)))}=S^1 \cup \{(3,0)\}$ implying $(3,0) \not \in \mathrm{Trans}(X,g) $. But we know that $f_1((3,0))=(2,0)$ and $\overline{(\mathcal{O}_g((3,0)))} \subseteq \overline{(\mathcal{O}_{f_{1,\infty}}((3,0)))}, $ so $\overline{(\mathcal{O}_{f_{1,\infty}}((3,0)))}=S^1 \cup \{(2,0), (3,0)\}=X $, hence $(3,0) \in \mathrm{Trans}(X,f_{1,\infty}) $. Thus, we have $
\mathrm{Trans}(X,g)\subsetneq \mathrm{Trans}(X,f_{1,\infty})$.

\begin{example}\label{E2}
There exists a periodic non-autonomous dynamical system \((X, f_{1,\infty})\) such that \((X, f_{1,\infty})\) is minimal but the induced autonomous dynamical system \((X, g)\) is not minimal, in fact $\mathrm{Trans}(X,g)=\emptyset$ as discussed below.
\end{example}

Define
$A=\{e^{i\theta}:\theta\in[0,2\pi)\},\;B=\{2+e^{i\theta}:\theta\in[0,2\pi)\}$ $,\;X=A\cup B$   with the usual subspace topology of $\mathbb C,\;p=1=e^{i0}=2+e^{i\pi}$ $,\;\alpha/\pi\notin\mathbb{Q},\;g_1:A\to B,\;g_1(e^{i\theta})=2+e^{i(\theta+\alpha)},\;g_2:B\to B,\;g_2(2+e^{i\theta})=2+e^{i(\theta+\alpha+\pi)}$
and paste them to obtain
\[
f_1 \colon X \to X, \qquad
f_1(x) = 
\begin{cases}
g_1(x), & x \in A,\\
g_2(x), & x \in B.
\end{cases}
\]
Similarly, define
$g_3: A \to A,\; g_3(e^{i\theta}) = e^{i(\theta+\pi)},\quad\text{and}\quad g_4: B \to A,\; g_4(2 + e^{i\theta}) = e^{i\theta}$
and paste to obtain
\[
f_2 \colon X \to X, \qquad
f_2(x) = 
\begin{cases}
g_3(x), & x \in A,\\
g_4(x), & x \in B.
\end{cases}
\]

By construction, \(g_1(p) = g_2(p)\) and \(g_3(p) = g_4(p)\), so \(f_1\) and \(f_2\) are continuous on \(X\). Let \((X, f_{1,\infty})\) be the 2-periodic non-autonomous system generated by \(\{f_1, f_2\}\). Consider the induced autonomous system $(X,g)$,  where $g=f_2 \circ f_1$.

We now show that \((X, f_{1,\infty})\) is minimal but \((X, g)\) is not.

\medskip
\noindent\textbf{Case 1:} Let \(x \in A\), so \(x = e^{i\theta}\) for some \(\theta \in [0, 2\pi)\). Then
\[
g \big|_A \colon e^{i\theta} \mapsto e^{i(\theta+\alpha)},
\]
is an irrational rotation on \(A\) and the orbit \(\mathcal{O}_{g}(x) = \{g^m(x) : m \ge 0\}\) is dense in \(A\). Let \(U \subseteq B\) be any open arc, so that $U = \{2 + e^{i\phi} : \phi \in (\phi_1, \phi_2) , \ \  \phi_1, \phi_2 \in [0, 2\pi)\}$, then $U' = \{e^{i\psi} : \psi \in (\phi_1 - \alpha, \phi_2 - \alpha)\} \subseteq A$ is an open arc in $A$.

By density of \(\mathcal{O}_{g}(x)\), there exists an \(m \in \mathbb{N}\) such that \(g^m(x) \in U'\), hence
\[
f_1(g^m(x)) = f_1((f_2 \circ f_1)^m(x)) = f_1^{2m+1}(x) \in U.
\]
Since \(U\) was arbitrary in \(B\), \(\mathcal{O}_{f_{1,\infty}}(x)\) is dense in both \(A\) and \(B\).

\medskip
\noindent\textbf{Case 2:} Let \(x \in B\), so \(x = 2 + e^{i\theta}\). Set \(y = f_1(x) = 2 + e^{i(\theta + \alpha + \pi)}\). Consider the following map on \(B\)
\[
h = (f_1 \circ f_2) \big|_B \colon 2 + e^{i\psi} \mapsto 2 + e^{i(\psi + \alpha)},
\]
which is again an irrational rotation with orbit \(\mathcal{O}_h(y)\) dense in \(B\). We have the following subcases: 

\medskip
\noindent\textbf{(a) Visiting an arc in \(A\):} Let $W = \{e^{i\phi} : \phi \in (\phi_3, \phi_4), \ \phi_3, \phi_4 \in [0, 2\pi) \} \subseteq A$ be any arc in $A$, then $W' = \{2 + e^{i\phi} : \phi \in (\phi_3, \phi_4)\} \subseteq B $ is an arc in $B$. By minimality of $h = (f_1 \circ f_2) \big|_B$, there exists an \(n\in \mathbb N\) such that \(h^n(y) \in W'\), then
\[
f_2(h^n(y)) \in W \quad \Rightarrow \quad f_2 \circ (f_1 \circ f_2)^n (f_1(x)) = (f_2 \circ f_1)^{n+1}(x) = f_1^{2n+2}(x) \in W.
\]

\medskip
\noindent\textbf{(b) Visiting an arc in \(B\):} Let $V = \{2 + e^{i\phi} : \phi \in (\phi_5, \phi_6), \ \phi_5, \phi_6 \in [0,2\pi)\} \subseteq B.$ By density of \(\mathcal{O}_h(x)\), there exists a \(k\in \mathbb N\) such that \(h^k(f_1(x)) \in V\). Then
\[
f_1^{2k+1}(x) = f_1 \circ (f_2 \circ f_1)^k(x) = h^k(f_1(x)) \in V.
\]

\medskip
\noindent In both subcases, the orbit of \(x \in B\) meets every open set in \(X = A \cup B\), proving minimality.

\medskip
\noindent However, note that \(g(X) \subseteq A\), so the autonomous system \((X, g)\) is not minimal.

\begin{remark}
    Example \ref{E2} shows that even on a connected space $X$ it is possible for a periodic non-autonomous system $(X,f_{1,\infty})$ to be minimal while the induced autonomous system is not minimal. This also disproves result \cite[Proposition 2]{raghav2018finitely}.
\end{remark}

Theorem \ref{T3} extends the classical Auslander–Yorke theorem to periodic non-autonomous dynamical systems.  

\begin{theorem}\label{T3}
Let \(\bigl(X,f_{1,\infty}\bigr)\) be a periodic non-autonomous dynamical system on a Hausdorff uniform space $(X, \mathcal{U})$ such that $\mathrm{Trans}(X,g)\neq\emptyset$. If
\( \ 
\mathrm{Eq}\bigl(X,f_{1,\infty}\bigr)\neq\emptyset  ,
\)
then,
\(
\mathrm{Trans}\bigl(X,f_{1,\infty}\bigr)\;\subseteq\;\mathrm{Eq}\bigl(X,f_{1,\infty}\bigr).
\)
Further, if \( \bigl(X,f_{1,\infty}\bigr)\) is minimal then it is either sensitive or equicontinuous.
\end{theorem}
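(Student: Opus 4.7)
The argument rests on Theorem \ref{T1}, which under the hypothesis $\mathrm{Trans}(X,g)\neq\emptyset$ identifies $\mathrm{Trans}(X,f_{1,\infty})$ with $\mathrm{Trans}(X,g)$. This identification is the bridge between the two systems: it lets me substitute $g$-orbits for $f_{1,\infty}$-orbits and thereby push an iterate of a transitive point into any prescribed non-empty open set, which is the one ingredient needed to transport equicontinuity-type control from one place to another.

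For the first assertion, fix $y\in\mathrm{Eq}(X,f_{1,\infty})$ and $x\in\mathrm{Trans}(X,f_{1,\infty})$; I will show $x\in\mathrm{Eq}(X,f_{1,\infty})$. Given an entourage $D$, choose a symmetric $D_1$ with $D_1\circ D_1\subseteq D$. Equicontinuity at $y$ applied to $D_1$ provides an open neighborhood $V$ of $y$ on which $(f_1^n(v_1),f_1^n(v_2))\in D_1$ for all $v_1,v_2\in V$ and all $n\in\mathbb N$. By Theorem \ref{T1} and $g$-transitivity of $x$, there exists $k$ with $f_1^{pk}(x)\in V$; set $m=pk$, and use continuity of $f_1^m$ to select an open neighborhood $U_0\ni x$ with $f_1^m(U_0)\subseteq V$. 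For $n\ge m$, write $n=m+j$: periodicity gives $f_{m+i}=f_i$ for all $i\ge 1$ (because $p\mid m$), hence the factorization $f_1^n=f_1^j\circ f_1^m$, and therefore $(f_1^n(x),f_1^n(z))=(f_1^j(f_1^m(x)),f_1^j(f_1^m(z)))\in D_1\subseteq D$ for every $z\in U_0$. For each of the finitely many $0\le n<m$, continuity of $f_1^n$ at $x$ yields a neighborhood $U_n$ with $f_1^n(U_n)\subseteq D_1[f_1^n(x)]$. Intersecting $U_0,\dots,U_{m-1}$ produces a neighborhood of $x$ witnessing $D$-equicontinuity; since $D$ was arbitrary, $x\in\mathrm{Eq}(X,f_{1,\infty})$.

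For the dichotomy, I would argue by contrapositive: assume $(X,f_{1,\infty})$ is minimal and not sensitive, and deduce equicontinuity. Non-sensitivity says that for every entourage $D$ there is a non-empty open set $U^*$ on which $(f_1^n(u),f_1^n(v))\in D_1$ for every pair $u,v\in U^*$ and every $n$ (again choosing $D_1$ with $D_1\circ D_1\subseteq D$); this $U^*$ now plays exactly the role that $V$ played above. For arbitrary $x\in X$, minimality combined with Theorem \ref{T1} gives $x\in\mathrm{Trans}(X,g)$, so the same transfer argument (land an iterate of $x$ in $U^*$ via $g$-transitivity, propagate $D$-control to all $n\ge m$ via the periodicity factorization, and patch on the finitely many initial iterates via continuity) shows $x$ is $D$-equicontinuous. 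Varying $D$ gives $x\in\mathrm{Eq}(X,f_{1,\infty})$, and since $x$ was arbitrary the system is equicontinuous.

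The main technical point is the periodicity factorization $f_1^{m+j}=f_1^j\circ f_1^m$ when $p\mid m$; this is the only place the periodic (as opposed to general non-autonomous) structure is essential, and it reduces control of the entire tail $\{f_1^n\}_{n\ge m}$ to control of $\{f_1^j\}_{j\ge 0}$ on a single neighborhood reached by one application of $f_1^m$. The remaining work is standard entourage bookkeeping (symmetric refinements, composing $D_1$ with itself inside $D$) and a finite patching argument to handle the initial segment $0\le n<m$.
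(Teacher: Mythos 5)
Your proof is correct and follows essentially the same route as the paper for the inclusion $\mathrm{Trans}(X,f_{1,\infty})\subseteq\mathrm{Eq}(X,f_{1,\infty})$: anchor at an equicontinuity point, use Theorem \ref{T1} to land an iterate $f_1^{Np}(x)=g^N(x)$ of the transitive point in its controlling neighborhood, exploit the periodic factorization $f_1^{Np+j}=f_1^j\circ f_1^{Np}$ for the tail, and patch the finitely many initial iterates by continuity. One harmless bookkeeping slip: equicontinuity at $y$ applied to the symmetric $D_1$ controls orbits of points of $V$ relative to the orbit of $y$, so for a pair $v_1,v_2\in V$ you only get $(f_1^n(v_1),f_1^n(v_2))\in D_1\circ D_1\subseteq D$, not $\in D_1$; since your final target is $D$, the argument still closes exactly as written. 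For the dichotomy your treatment is in fact more complete than the paper's: where the paper simply asserts that $\mathrm{Eq}(X,f_{1,\infty})=\emptyset$ ``clearly'' yields sensitivity, you prove the contrapositive by running the same transfer argument with the open set $U^*$ furnished by non-sensitivity playing the role of the equicontinuity neighborhood, which is the honest justification of that step and uses only the stated hypotheses ($\mathrm{Trans}(X,g)\neq\emptyset$ plus minimality).
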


\begin{proof}
Let $p$ be the period of \((X,f_{1,\infty})\), then \(g:=f^{\,p}_{1}=f_{p}\circ f_{p-1}\circ\cdots\circ f_{1}\). Suppose \(x\in \mathrm{Trans}\bigl(X,f_{1,\infty}\bigr)\) and $E \in \mathcal{U}$.  Choose a symmetric entourage \(E'\) with \(E'\circ E'\subseteq E\).  For \(z\in\mathrm{Eq}(X,f_{1,\infty})\), there is a symmetric entourage \(D'\) such that
\[
f_1^n\bigl(D'[z]\bigr)\;\subseteq\;E'\bigl[f_1^n(z)\bigr],
\
\text{for each }n\in\mathbb{N}.
\]
  By Theorem \ref{T1}, \(\mathrm{Trans}(X,g)=\mathrm{Trans}(X,f_{1,\infty})
\) so \( \ x \in \mathrm{Trans}\bigl(X,g\bigr)\), by continuity of \(g\) there exist an \(N\in\mathbb{N}\) and a symmetric entourage \(D''\subseteq D'\) with
\[
f_1^{Np}\bigl(D''[x]\bigr)
=
g^N\bigl(D''[x]\bigr)
\;\subseteq\;
D'[z].
\]
Then for every \(j\ge0\) and \(y\in D''[x]\),
\[
f_1^{Np+j}(y)
= f_1^j\bigl(f_1^{Np}(y)\bigr)
\;\in\;
E'\bigl[f_1^{Np+j}(
z)\bigr],
\]
and since \((f_1^{Np+j}(z),f_1^{Np+j}(x))\in E'\), it follows that \((f_1^{Np+j}(y),f_1^{Np+j}(x))\in E\).\\
For each \( m=\{1,\dots,Np-1 \}\), continuity of \(f_1^m\) at \(x\) gives a symmetric entourage \(D_m\) with $f_1^m\bigl(D_m[x]\bigr)\;\subseteq\;E\bigl[f_1^m(x)\bigr].$
Let
\[
D \;=\; D''\;\cap\;\bigcap_{m=1}^{Np-1}D_m.
\]
Then for every \(k\in\mathbb{N}\) and \(y\in D[x]\), we have \((f_1^k(y),f_1^k(x))\in E\).  Hence,
\(\mathrm{Trans}\bigl(X,f_{1,\infty}\bigr)\subseteq\mathrm{Eq}\bigl(X,f_{1,\infty}\bigr)\). \\
Now, assume that \(\bigl(X,f_{1,\infty}\bigr)\) is minimal. If \(\mathrm{Eq}\bigl(X,f_{1,\infty}\bigr)=\emptyset\) then clearly \(\bigl(X,f_{1,\infty}\bigr)\) is sensitive. If \(\mathrm{Eq}\bigl(X,f_{1,\infty}\bigr)\neq\emptyset\) then \(\mathrm{Trans}\bigl(X,f_{1,\infty}\bigr)\subseteq\mathrm{Eq}\bigl(X,f_{1,\infty}\bigr)\) and since \(\mathrm{Trans}\bigl(X,f_{1,\infty}\bigr)=X\) we have, \(\mathrm{Eq}\bigl(X,f_{1,\infty}\bigr)=X\) and hence \(\bigl(X,f_{1,\infty}\bigr)\) is equicontinuous.
\end{proof}

\begin{remark}
    There exists a periodic non-autonomous dynamical system \(\bigl(X,f_{1,\infty}\bigr)\) with  $\mathrm{Trans}\bigl(X,g\bigr)=\emptyset$, which is minimal but is neither sensitive nor equicontinuous as discussed below.
\end{remark}
\begin{example}\label{E3}
    From \cite{Kurka2003}, we know that $(\mathbb{Z}_2,R_1)$, where $\mathbb Z_2$ is the set of $2$-adic integers, is minimal and equicontinuous and $\mathbb Z_2$ is homeomorphic to the classical middle third Cantor set $\mathcal C$, here $R_1$ is the addition by $e=(1,1,...)$ on $\mathbb Z_2$. From \cite{Kurka2003} and \cite{Moothathu2007} we know that the Sturmian subshift $(X_\alpha,\sigma)$ (where $\alpha$ is an irrational multiple of $\pi$) is minimal and sensitive and $X_\alpha$ is homeomorphic to $\mathcal C$ as well. So, there exists a homeomorphism $h: X_\alpha \rightarrow \mathbb Z_2$. Let $f=h^{-1} \circ R_1 \circ h:X_\alpha \rightarrow X_\alpha $ and hence $h \circ f=R_1 \circ h$. The system $(X_\alpha,f)$ is topologically conjugate to $(\mathbb{Z}_2,R_1)$ and is therefore minimal and equicontinuous.\\ 
    Define, $A=X_\alpha \times \{a\}$, $B=X_\alpha \times \{b\}$, let $X=A \cup B$ and define \[
f_1 \colon X \to X, \qquad
f_1(x) = 
\begin{cases}
(f(x),b), & (x,a) \in A,\\
(\sigma (x),a), & (x,b) \in B.
\end{cases}
\]
\[
f_2 \colon X \to X, \qquad
f_2(x) = 
\begin{cases}
(x,b), & (x,a) \in A,\\
(x,a), & (x,b) \in B.
\end{cases}
\]
Clearly, $f_1$ and $f_2$ are continuous. Let $(X,f_{1,\infty})$ be the 2-periodic non-autonomous dynamical system generated by $\{f_1,f_2\}. $ Define the induced autonomous map $g= f_2 \circ f_1$.
We now show that $(X,f_{1,\infty})$ is minimal.\\
\textbf{Case 1:} $(x,a)\in A$, where $x\in X_\alpha$.
\[ g \big|_A: (x,a) \rightarrow (f(x),a)\]
and so $(A=X_\alpha \times \{a\},g \big|_A) $ is minimal and equicontinuous as $(X_\alpha, f)$ is minimal and equicontinuous. Therefore $\overline{\mathcal{O}_g((x,a))}$ is dense in $A$. Let $U \times \{b\}$ be a non-empty open set in $B$, where $U$ is a non-empty open set of $X_\alpha$. We observe that $f^{-1}(U) \times\{a\} $ is non-empty and open in $A$ and since $\overline{\mathcal{O}_g((x,a))}$ is dense in $A$, there exists an $n \in \mathbb N$ such that, $g^n((x,a)) \in f^{-1}(U)\times \{a\}$. Therefore $f_1^{2n+1}((x,a))=f_1(g^n((x,a)))\in f_1(f^{-1}(U)\times \{a\}) \subseteq U \times \{b\}$. Hence, $\overline{\mathcal{O}_{f_{1,\infty}}((x,a))}$ is dense in $X=A\cup B$.\\
\textbf{ Case 2:} $(x,b) \in B$, where $x \in X_\alpha$. Then
\[ g \big|_B: (x,b) \rightarrow (\sigma(x),b)\]
and $(B=X_\alpha \times \{b\},g \big|_B )$ is minimal and sensitive as $(X_\alpha, \sigma)$ is minimal and sensitive. Therefore $\overline{\mathcal{O}_g((x,b))}$ is dense in $B$. Let $V \times \{a\}$ be a non-empty open set in $A$, where $V$ is a non-empty open set of $X_\alpha$. Then $\sigma^{-1}(V) \times \{b\}$ is non-empty and open in $B$ and since $\overline{\mathcal{O}_g((x,b))}$ is dense in $B$, there exists an $m \in \mathbb N$ such that $g^m((x,b))\in \sigma^{-1}(V) \times \{b\}$. Therefore, $f_1^{2m+1}((x,b))=f_1(g^m((x,b))) \in f_1(\sigma^{-1}(V))\times \{b\}\subseteq V \times \{a\}$. Hence, $\overline{\mathcal{O}_{f_{1,\infty}}((x,b))}$ is dense in $X=A\cup B$ implying $(X,f_{1,\infty})$ is minimal.\\
Clearly $g(A)\subseteq A$ and $g(B)\subseteq B$, so $\mathrm{Trans}(X,g)=\emptyset$. Since $(B, g \big|_B)$ is sensitive, $(X,g)$ is not equicontinuous, and by  \cite[Proposition 1]{raghav2018finitely}  $(X,f_{1,\infty})$ is not equicontinuous. Similarly, since $(A, g \big|_A)$ is equicontinuous,  $(X,g)$ is not sensitive and by \cite[Proposition 4]{raghav2018finitely}, $(X,f_{1,\infty})$ is not sensitive.
\end{example}

\begin{theorem}\label{T4}
Let \(\bigl(X,f_{1,\infty}\bigr)\) be a non-autonomous dynamical system on a Hausdorff uniform space $(X, \mathcal{U})$. We have the following:
\begin{description}
    \item[(1)] If $x \in \mathrm{Eq}\bigl(X,f_{1,\infty}\bigr)$ then $\Omega_{f_{1,\infty}}(x) = \omega_{f_{1,\infty}}(x)$
    \item[(2)] Suppose \(\bigl(X,f_{1,\infty}\bigr)\) is periodic and $\mathrm{Trans}(X,g)\neq\emptyset$. If $\mathrm{Eq}\bigl(X,f_{1,\infty}\bigr)\neq \emptyset$ then \(\mathrm{Trans}\bigl(X,f_{1,\infty}\bigr)=\mathrm{Eq}\bigl(X,f_{1,\infty}\bigr)\)
\end{description}

\end{theorem}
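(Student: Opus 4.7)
\textbf{Proof proposal for Theorem \ref{T4}.}

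\emph{Part (1).} The inclusion $\omega_{f_{1,\infty}}(x)\subseteq\Omega_{f_{1,\infty}}(x)$ is automatic, since any $U\in\mathcal N_x$ contains $x$, so whenever $f_1^n(x)\in V$ one has $f_1^n(U)\cap V\neq\emptyset$. For the reverse, the plan is the standard "pull a nearby orbit back to the base orbit" trick in a uniform space. Given $y\in\Omega_{f_{1,\infty}}(x)$ and a neighborhood $V$ of $y$, I would pick an entourage $D$ with $D[y]\subseteq V$ and a symmetric $D'\in\mathcal U$ with $D'\circ D'\subseteq D$, then use equicontinuity at $x$ to produce a neighborhood $U$ of $x$ for which $f_1^n(U)\subseteq D'[f_1^n(x)]$ for every $n$. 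Feeding $U$ and $D'[y]$ into the definition of $\Omega$ yields some $n>N$ and $z\in U$ with $f_1^n(z)\in D'[y]$, and composing the two $D'$-estimates places $f_1^n(x)$ in $D[y]\subseteq V$; as $V$ and $N$ were arbitrary, $y\in\omega_{f_{1,\infty}}(x)$.

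\emph{Part (2).} Theorem \ref{T3} already gives $\mathrm{Trans}(X,f_{1,\infty})\subseteq\mathrm{Eq}(X,f_{1,\infty})$, so only the reverse inclusion remains. Fix $x\in\mathrm{Eq}(X,f_{1,\infty})$ and, via Theorem \ref{T1}, a point $x_0\in\mathrm{Trans}(X,g)=\mathrm{Trans}(X,f_{1,\infty})$. For an arbitrary $y\in X$ and entourage $E$, choose a symmetric $E'$ with $E'\circ E'\subseteq E$ and, by equicontinuity, a neighborhood $U$ of $x$ so that $(f_1^n(x),f_1^n(z))\in E'$ for every $z\in U$ and every $n$. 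The strategy is to place the orbit of $x_0$ in two steps: first use density of $\mathcal O_g(x_0)$ to find $N$ with $g^N(x_0)=f_1^{Np}(x_0)\in U$, then use density of $\mathcal O_{f_{1,\infty}}(x_0)$ — which visits $E'[y]$ infinitely often, since $X$ has no isolated points — to pick an index of the form $Np+k$ with $f_1^{Np+k}(x_0)\in E'[y]$.

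The step that makes the whole argument go through is the identity
\[
f_1^{Np+k}(x_0)\;=\;f_1^k\bigl(f_1^{Np}(x_0)\bigr),
\]
which generally fails in non-autonomous systems but holds here because $Np$ is a multiple of the period $p$, so $f_{Np+j}=f_j$ for each $j$ and the later iteration genuinely matches the non-autonomous schedule. Granted this identity, equicontinuity at the point $z=f_1^{Np}(x_0)\in U$ gives $(f_1^k(x),f_1^{Np+k}(x_0))\in E'$, and combining with $(f_1^{Np+k}(x_0),y)\in E'$ through $E'\circ E'\subseteq E$ yields $f_1^k(x)\in E[y]$. Since $E$ and $y$ were arbitrary, $x\in\mathrm{Trans}(X,f_{1,\infty})$, completing the reverse inclusion.

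I expect the main obstacle to be precisely this non-autonomous indexing: because $f_1^k\circ f_1^m\neq f_1^{k+m}$ in general, the equicontinuity transfer along a nearby orbit used in the autonomous Auslander–Yorke proof cannot be copied verbatim. The fix — forcing the "approach" time to be an exact multiple $Np$ of the period before counting the remaining $k$ iterations — is the only place where periodicity is used, and it dovetails cleanly with Theorem \ref{T1}, which identifies the $g$-iterate $g^N(x_0)$ with the non-autonomous iterate $f_1^{Np}(x_0)$. Part (1), by contrast, is a pure $D'\circ D'\subseteq D$ exercise in uniform-space topology and requires neither periodicity nor any transitivity hypothesis.
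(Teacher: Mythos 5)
Your proposal is correct, but for part (2) it takes a genuinely different route from the paper, so a comparison is in order. For part (1) the paper simply cites Akin--Auslander--Berg (Theorem 2.3 of \cite{akin1997transitive}); your direct argument with a symmetric $D'$ satisfying $D'\circ D'\subseteq D$ is exactly what that citation amounts to, so there is no difference in substance. For part (2), both you and the paper obtain $\mathrm{Trans}(X,f_{1,\infty})\subseteq\mathrm{Eq}(X,f_{1,\infty})$ from Theorem \ref{T3}; the divergence is in the reverse inclusion. The paper deduces from $\mathrm{Trans}(X,g)\neq\emptyset$ (via the cited Degirmenci--Kocak result and the transfer to the non-autonomous system) that $(X,f_{1,\infty})$ is topologically transitive, asserts $\Omega_{f_{1,\infty}}(x)=X$ for every $x$, and then applies part (1) to get $\omega_{f_{1,\infty}}(x)=X$, i.e.\ $x\in\mathrm{Trans}(X,f_{1,\infty})$. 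You instead argue directly: land $g^N(x_0)$ of a transitive point $x_0$ in the equicontinuity neighborhood $U$ of $x$, use density of the tail of the non-autonomous orbit of $x_0$ (valid because $X$ is Hausdorff without isolated points, so a dense set minus finitely many points stays dense) to hit $E'[y]$ at a time $Np+k$, and conclude via the periodicity identity $f_1^{Np+k}=f_1^k\circ f_1^{Np}$ together with $E'\circ E'\subseteq E$. Your version never uses part (1) in part (2) and is more self-contained: in particular it sidesteps the paper's step from topological transitivity to $\Omega_{f_{1,\infty}}(x)=X$, which needs visits at arbitrarily large times and is not spelled out there; the price is that you redo an orbit-transfer computation that the paper's shorter route avoids by recycling part (1) and the transitivity citations. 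One phrasing slip worth fixing: the estimate $(f_1^k(x),f_1^{Np+k}(x_0))\in E'$ comes from equicontinuity \emph{at $x$} applied to the point $z=f_1^{Np}(x_0)\in U$, not from ``equicontinuity at $z$''; the computation itself is correct.
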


\begin{proof}
\textbf{(1)}  We get the desired result by \cite[Theorem 2.3]{akin1997transitive}.\\ \textbf{(2)} Let $x\in \mathrm{Eq}\bigl(X,f_{1,\infty}\bigr)$. By \cite[Proposition 1]{DegirmenciKocak2003}, $ (X,g) $ is topologically transitive as $\mathrm{Trans}(X,g)\neq\emptyset$, and therefore $(X,f_{1,\infty})$ is topologically transitive. Hence, for  every $x \in X$, $\Omega(x) = X$, and by (1) we get $\omega(x)=\Omega(x)=X$, so $x \in \operatorname{Trans}(X, f_{1,\infty})$ and hence $\operatorname{Eq}(X, f_{1,\infty}) \subseteq \operatorname{Trans}(X, f_{1,\infty})$. By Theorem \ref{T3}, we have \(\mathrm{Trans}\bigl(X,f_{1,\infty}\bigr)\subseteq\mathrm{Eq}\bigl(X,f_{1,\infty}\bigr)\) and hence $\mathrm{Trans}\bigl(X,f_{1,\infty}\bigr)=\mathrm{Eq}\bigl(X,f_{1,\infty}\bigr)$.
\end{proof}

The following theorem extends the analogue of the Auslander–Yorke dichotomy proved by Huang et al. \cite[Theorem 3.4]{huang2018analogues} to the setting of periodic non‐autonomous dynamical systems.

\begin{theorem}\label{T6}
Let \((X,f_{1,\infty})\) be a periodic non-autonomous dynamical system on a Hausdorff uniform space $(X, \mathcal{U})$ such that $\mathrm{Trans}(X,g)\neq\emptyset$. Then either $(X,f_{1,\infty})$ is thickly sensitive ($\mathrm{Eq}_{\text{syn}}(X,f_{1,\infty})= \emptyset$) or $ \mathrm{Trans}(X,f_{1,\infty}) \subseteq \mathrm{Eq}_{\text{syn}}(X,f_{1,\infty})$. In particular, if $(X,f_{1,\infty})$ is minimal then it is either thickly sensitive or syndetically equicontinuous.

\end{theorem}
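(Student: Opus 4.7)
The plan is to prove the contrapositive: assume $(X, f_{1,\infty})$ is not thickly sensitive and show $\mathrm{Trans}(X, f_{1,\infty}) \subseteq \mathrm{Eq}_{\text{syn}}(X, f_{1,\infty})$. Since $\mathrm{Trans}(X, f_{1,\infty}) = \mathrm{Trans}(X, g) \neq \emptyset$ by Theorem \ref{T1}, this forces $\mathrm{Eq}_{\text{syn}} \neq \emptyset$ whenever the system fails to be thickly sensitive, which justifies the parenthetical equivalence (the other direction is trivial: thick sensitivity with entourage $D$ prevents any neighborhood $U$ from having $J_{f_{1,\infty}}(U, D)$ syndetic, so $\mathrm{Eq}_{\text{syn}} = \emptyset$). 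The minimal case is then immediate, as $\mathrm{Trans}(X, f_{1,\infty}) = X$ forces $\mathrm{Eq}_{\text{syn}}(X, f_{1,\infty}) = X$.

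Unpacking the hypothesis: for every $D \in \mathcal{U}$ there is a non-empty open $V_D$ with $J_{f_{1,\infty}}(V_D, D)$ syndetic (its complement $N_{f_{1,\infty}}(V_D, D)$ fails to be thick, and a subset of $\mathbb{N}$ is syndetic iff its complement is not thick). Fix $x \in \mathrm{Trans}(X, f_{1,\infty})$ and $E \in \mathcal{U}$, choose a symmetric $E' \in \mathcal{U}$ with $E' \circ E' \subseteq E$, and set $V = V_{E'}$. By Theorem \ref{T1}, $x \in \mathrm{Trans}(X, g)$, so there exists $N \in \mathbb{N}$ with $g^N(x) = f_1^{Np}(x) \in V$, where $p$ is the period of $(X, f_{1,\infty})$; continuity of $f_1^{Np}$ then yields a neighborhood $U_0$ of $x$ with $f_1^{Np}(U_0) \subseteq V$.

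The key periodicity identity $f_1^{m + Np} = f_1^m \circ f_1^{Np}$ (valid because $p \mid Np$) transfers the syndetic property from $V$ to $U_0$: for every $m \in J_{f_{1,\infty}}(V, E')$ and every $y_1, y_2 \in U_0$, the pair
\[
(f_1^{m+Np}(y_1), f_1^{m+Np}(y_2)) = (f_1^m(f_1^{Np}(y_1)), f_1^m(f_1^{Np}(y_2)))
\]
lies in $E' \subseteq E$, so $Np + J_{f_{1,\infty}}(V, E') \subseteq J_{f_{1,\infty}}(U_0, E)$. To also capture the initial block $\{1, \dots, Np\}$, invoke continuity of each $f_1^n$ at $x$ to obtain $U_n \ni x$ with $f_1^n(U_n) \subseteq E'[f_1^n(x)]$; symmetry of $E'$ then gives $(f_1^n(y_1), f_1^n(y_2)) \in E' \circ E' \subseteq E$ for $y_1, y_2 \in U_n$, so $\{1, \dots, Np\} \subseteq J_{f_{1,\infty}}(U, E)$ where $U := \bigcap_{n=0}^{Np} U_n$. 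The union $\{1, \dots, Np\} \cup (Np + J_{f_{1,\infty}}(V, E'))$ is syndetic in $\mathbb{N}$ and sits inside $J_{f_{1,\infty}}(U, E)$, so $x \in \mathrm{Eq}_{\text{syn}}(X, f_{1,\infty})$.

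The main obstacle is the non-semigroup nature of $f_{1,\infty}$: the clean composition identity $f_1^{m+k} = f_1^m \circ f_1^k$ holds only when $p \mid k$, so the hitting time into $V$ has to be taken along the $g$-orbit of $x$ and not merely along the full $f_{1,\infty}$-orbit — this is precisely what Theorem \ref{T1} (together with $\mathrm{Trans}(X, g) \neq \emptyset$) makes available. Once this periodicity shift is in place, patching the shifted syndetic tail to the finite continuity block near $x$ is routine, but one must note that the gap between $Np$ and the first element of $Np + J_{f_{1,\infty}}(V, E')$ is bounded by the syndetic constant of $J_{f_{1,\infty}}(V, E')$, ensuring the union is syndetic in all of $\mathbb{N}$ and not merely beyond $Np$.
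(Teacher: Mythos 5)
Your proof is correct and takes essentially the same route as the paper's: non-thick sensitivity gives, for each entourage, a non-empty open set with syndetic equicontinuity times, Theorem \ref{T1} lets you hit that set along the $g$-orbit of the transitive point, and the periodicity identity $f_1^{m+Np}=f_1^m\circ f_1^{Np}$ transfers the syndetic set to a neighborhood of $x$. Your explicit handling of the initial block $\{1,\dots,Np\}$ and of the parenthetical claim that thick sensitivity forces $\mathrm{Eq}_{\mathrm{syn}}(X,f_{1,\infty})=\emptyset$ are only minor refinements of the same argument (the paper simply invokes that a translate of a syndetic set is syndetic).
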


\begin{proof}
Let $p$ be the period of \((X,f_{1,\infty})\), then \(g:=f^{\,p}_{1}=f_{p}\circ f_{p-1}\circ\cdots\circ f_{1}\). Let $D \in \mathcal{U}$ be a symmetric entourage. Suppose $(X, f_{1,\infty})$ is not thickly sensitive. Then there exists a non-empty open set $U' \subseteq X$ such that the set
\[
S_{f_{1,\infty}}(U', D) = \{ n \in \mathbb{N} : \text{there are } x_1, x_2 \in U' \text{ satisfying } (f_1^n(x_1), f_1^n(x_2)) \notin D \}
\]
is not thick. Thus, its complement $J_{f_{1,\infty}}(U', D) = \mathbb{N} \setminus S_{f_{1,\infty}}(U', D)$ is syndetic implying that for $ u_1, u_2  \in U'$ and $n \in J_{f_{1,\infty}}(U', D)$ we get $(f_1^n(u_1),f_1^n(u_2)) \in D.$
\\
Let \(x\in \mathrm{Trans}(X,f_{1,\infty})\), by Theorem \ref{T1},  $\mathrm{Trans}(X,g)=\mathrm{Trans}(X,f_{1,\infty})$ and hence there exists an $m \in \mathbb{N}$ such that $g^m(x)=f_1^{mp}(x) \in U'$. Since $f_1^{mp}$ is continuous, there exists an open neighborhood $U$ of $x$ such that $f_1^{mp}(U) \subseteq U'$.

Now, for each $y \in U$ and $n \in J_{f_{1,\infty}}(U', D)$, we have
\[
(f_1^{mp+n}(x), f_1^{mp+n}(y)) = (f_1^n(f_1^{mp}(x)), f_1^n(f_1^{mp}(y))) \in D,
\]
as $f_1^{mp}(x), f^{mp}(y) \in U'$, and $n \in J_{f_{1,\infty}}(U', D)$. Hence,
\[
 mp+J_{f_{1,\infty}}(U', D)=\{ mp+n : n \in J_{f_{1,\infty}}(U', D) \}\subseteq J_{f_{1,\infty}}(U, D),
\]
and since a translate of a syndetic set is syndetic, it follows that $J_{f_{1,\infty}}(U, D)$ is syndetic. Thus, $x$ is a syndetically equicontinuous point. Since $x \in \mathrm{Trans}(X, f_{1,\infty})$ was arbitrary, we conclude that $
\mathrm{Trans}(X, f_{1,\infty}) \subseteq \mathrm{Eq}_{\mathrm{syn}}(X, f_{1,\infty}).
$
In particular, if $(X, f_{1,\infty})$ is minimal, then $\mathrm{Trans}(X, f_{1,\infty}) = X$ and so the system is syndetically equicontinuous if it is not thickly sensitive.

\end{proof}
\pagebreak
\begin{remark}
    There exists a periodic non-autonomous dynamical system \(\bigl(X,f_{1,\infty}\bigr)\) with $\mathrm{Trans}\bigl(X,g\bigr)=\emptyset$, which is minimal but is neither thickly sensitive nor syndetically equicontinuous as discussed below:
\end{remark}
\begin{example}\label{E4}
    From \cite{Kurka2003}, we know that the Sturmian subshift $(X_\alpha,\sigma)$ ( $\alpha$ is an irrational multiple of $\pi$ ) is minimal and an almost one-to-one extention of the minimal equicontinuous system $(S^1,R_\alpha)$, where $R_\alpha$ is the irrational rotation by $\alpha$. Hence, by \cite[Proposition 4.2]{huang2018analogues}, $(X_\alpha,\sigma)$ is syndetically equicontinuous. Let $(X_p,\sigma)$ be the minimal and topologically mixing system constructed in \cite{Petersen1970}. Both $X_p$ and $X_\alpha$ are homeomorphic to the Cantor set $ \mathcal{C}$, so there exists a homeomorphism $h: X_\alpha \rightarrow X_p$. Let $f=h^{-1} \circ \sigma \big |_{X_p} \circ h:X_\alpha \rightarrow X_\alpha $ then $h \circ f=\sigma\big|_{X_p} \circ h$. The system $(X_\alpha,f)$ is topologically conjugate to $(X_p,\sigma)$ and is therefore minimal and topologically mixing. By \cite[Theorem 6]{LiuLiaoWang2014} weakly mixing implies thickly sensitive and hence $(X_\alpha, f)$ is thickly sensitive as well.\\ 
    Define, $A=X_\alpha \times \{a\}$, $B=X_\alpha \times \{b\}$, $X=A \cup B$ and define \[
f_1 \colon X \to X, \qquad
f_1(x) = 
\begin{cases}
(f(x),b), & (x,a) \in A,\\
(\sigma (x),a), & (x,b) \in B.
\end{cases}
\]
\[
f_2 \colon X \to X, \qquad
f_2(x) = 
\begin{cases}
(x,b), & (x,a) \in A,\\
(x,a), & (x,b) \in B.
\end{cases}
\]
Clearly $f_1$ and $f_2$ are continuous. Let $(X,f_{1,\infty})$ be the 2-periodic non-autonomous dynamical system generated by $\{f_1,f_2\}. $ Consider the induced autonomous system $(X,g)$ where $g= f_2 \circ f_1$. Similar to the construction in Example \ref{E3} we can show that $(X,f_{1,\infty})$ is minimal and $\mathrm{Trans}(X,g)=\emptyset$.\\
Note that $\bigl(B=X_\alpha \times \{b\}, g \big|_B: (x,b)\rightarrow (f(x),b)\bigr)$ is thickly sensitive as $(X_\alpha , f)$ is thickly sensitive, so $(X,g)$ is not syndetically equicontinuous and by Theorem \ref{SEq}  $(X,f_{1,\infty})$ is not syndetically equicontinuous. Similarly, since $\bigl(A=X_\alpha \times \{a\}, g \big|_A: (x,a)\rightarrow (\sigma(x),a)\bigr)$ is syndetically equicontinuous,  $(X,g)$ is not thickly sensitive and by Theorem \ref{TSen} $(X,f_{1,\infty})$ is not thickly sensitive.\\
\end{example}

The following theorem provides a dichotomy in terms of eventual sensitivity and equicontinuity for periodic non-autonomous dynamical systems. 

\begin{theorem}\label{ES}
Let $(X, \mathcal{U)}$ be a compact Hausdorff uniform space. A periodic  dynamical system $(X, f_{1,\infty})$ with $\mathrm{Trans}(X,g)\neq\emptyset$, is either equicontinuous or eventually sensitive.
\end{theorem}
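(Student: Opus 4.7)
The plan is to prove the contrapositive: assume $(X,f_{1,\infty})$ is not eventually sensitive and deduce equicontinuity. The leverage is that unpacking ``not eventually sensitive'' is quite strong --- combined with $p$-periodicity it produces, for each scale $D$, an entire orbit along which the family $\{f_1^k\}_{k \ge 1}$ admits a uniform equicontinuity neighborhood.

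\smallskip

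Fix $D \in \mathcal U$ and pick symmetric $D_0 \in \mathcal U$ with $D_0 \circ D_0 \subseteq D$. Negating the definition of eventual sensitivity at $D_0$ gives $x \in X$ and symmetric $E \in \mathcal U$ such that $(f_1^{n+k}(x), f_1^k(y)) \in D_0$ whenever $n, k \ge 1$ and $y \in E[f_1^n(x)]$. Taking $n = mp$ and invoking $f_1^{mp+k} = f_1^k \circ g^m$ from periodicity, this becomes: for each $m \ge 1$, any two $y_1, y_2 \in V_m := E[g^m(x)]$ satisfy $(f_1^k(y_1), f_1^k(y_2)) \in D_0 \circ D_0 \subseteq D$ for every $k \ge 1$, so each $V_m$ is a uniform $D$-equicontinuity neighborhood of $g^m(x)$ for the family $\{f_1^k\}_{k \ge 1}$.

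\smallskip

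The scheme to cover $X$ by such neighborhoods runs as follows. Let $A_{D_0}$ be the set of points $z \in X$ admitting an entourage $F \in \mathcal U$ with $f_1^k(F[z]) \subseteq D_0[f_1^k(z)]$ for all $k \ge 1$. It is open (standard) and, by the previous paragraph, contains every $g^m(x)$; continuity of $g = f_1^p$ together with continuity of $f_1, f_1^2, \ldots, f_1^p$ gives that $A_{D_0}$ is $g^{-1}$-invariant (pulling back an equicontinuity neighborhood of $g(z)$ under $g$ and intersecting with neighborhoods obtained from the first $p$ iterates). So $A_{D_0}^c$ is closed and forward $g$-invariant; topological transitivity of $(X,g)$ --- obtained from Theorem \ref{T1} together with \cite[Proposition 1]{DegirmenciKocak2003} --- forces $A_{D_0}^c$ to have empty interior. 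Hence $A_{D_0}$ is dense open, and intersecting with the dense set $\mathrm{Trans}(X,g)$ yields a transitive $x^* \in A_{D_0}$. Running the extraction of the previous paragraph with $x^*$ in place of $x$, using the uniform $F$ coming from $x^* \in A_{D_0}$, we find that $\{F[g^m(x^*)]\}_{m \ge 1}$ is a family of uniform $D$-equicontinuity neighborhoods covering $X$ (by density of the orbit of $x^*$). Every $y \in X$ therefore inherits $D$-equicontinuity for all iterates $k$ beyond some $mp$; continuity of the finitely many initial iterates $f_1, f_1^2, \ldots, f_1^{mp}$ at $y$ handles the remaining indices. Since $D$ was arbitrary, $(X,f_{1,\infty})$ is equicontinuous.

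\smallskip

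The main obstacle is ensuring that the transitive point $x^*$ inherits the \emph{orbit-uniform} (not merely pointwise) equicontinuity property of the original $x$. This requires reproducing the Step~1 derivation at $x^*$ using only the single uniform entourage $F$ from $A_{D_0}$ together with the periodicity identity; the key simplification is that only the $p$ compositions $f_1, f_1^2, \ldots, f_1^p$ are distinct along any orbit, so compactness of $X$ and Hausdorffness of $\mathcal U$ let us organise the finite-stage entourage shrinkings uniformly over $m$. Everything else is then standard bookkeeping with entourages.
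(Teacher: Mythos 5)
Your unpacking of ``not eventually sensitive'' is fine: fixing $D$, choosing symmetric $D_0$ with $D_0\circ D_0\subseteq D$, taking $n=mp$ and using $f_1^{mp+k}=f_1^k\circ g^m$ does give that any $y_1,y_2\in E[g^m(x)]$ satisfy $(f_1^k(y_1),f_1^k(y_2))\in D$ for all $k$, so every orbit point $g^m(x)$ of the \emph{witness} $x$ lies in the (suitably defined, pairwise) open set $A_{D_0}$, and your backward-invariance-plus-transitivity argument correctly shows $A_{D_0}$ is dense open. The genuine gap is the covering step. Membership $x^*\in A_{D_0}$ only provides one entourage $F$ controlling the single neighborhood $F[x^*]$ of $x^*$; it says nothing about the points $g^m(x^*)$. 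The set $A_{D_0}$ is $g^{-1}$-invariant but not forward invariant, and $f_1^{mp}(F[x^*])$ need not be a neighborhood of $g^m(x^*)$ (forward images of neighborhoods are not neighborhoods), so stability at $x^*$ does not propagate along its orbit, let alone with one entourage uniform in $m$. The orbit-uniform property (a single $E$ working for all $n$) is available only at the witness $x$ supplied by non-eventual-sensitivity, and that point need not be transitive; the transitive point $x^*$ you extract is only known to be stable at itself. Your proposed repair --- that ``only the $p$ compositions $f_1,\dots,f_1^p$ are distinct along any orbit'' plus compactness lets one organise the shrinkings uniformly in $m$ --- does not touch the real obstruction, which concerns the base points $g^m(x^*)$, not the transition maps. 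As it stands your argument yields equicontinuity only on the dense open set $A_{D_0}$ (an almost-equicontinuity statement), whereas the theorem asserts equicontinuity at \emph{every} point, and minimality is not assumed, so there is no way to upgrade ``dense'' to ``all of $X$'' by the classical Auslander--Yorke transfer.

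It is worth noting that the paper avoids exactly this difficulty: it passes to the induced autonomous system $(X,g)$, which is topologically transitive by \cite[Proposition 1]{DegirmenciKocak2003} since $\mathrm{Trans}(X,g)\neq\emptyset$, invokes the dichotomy of \cite[Theorem 5.3]{good2020equicontinuity} for compact Hausdorff autonomous systems, and then transfers each horn back to $(X,f_{1,\infty})$: equicontinuity of $g$ gives equicontinuity of $f_{1,\infty}$ by \cite[Proposition 1]{raghav2018finitely}, while eventual sensitivity of $g$ passes to $f_{1,\infty}$ with the same entourage via $f_1^{pq}=g^q$. The step missing from your proposal --- deducing equicontinuity at every point of a transitive (not minimal) compact Hausdorff system from the failure of eventual sensitivity --- is precisely the content of the cited theorem, so it cannot be dismissed as entourage bookkeeping; either supply that argument in full or reduce to the autonomous case as the paper does.
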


\begin{proof}
 Since $\mathrm{Trans}(X,g)\neq\emptyset$ by \cite[Proposition 1]{DegirmenciKocak2003}, $(X,g)$
is topologically transitive. By \cite[Theorem 5.3]{good2020equicontinuity},  \((X,g)\) is either equicontinuous or eventually sensitive.

\medskip

\textbf{Case 1:} If \((X,g)\) is  equicontinuous, then by \cite[Proposition 1]{raghav2018finitely}, \((X,f_{1,\infty})\) is also  equicontinuous.

\medskip

\textbf{Case 2:} Suppose \((X,g)\) is eventually sensitive.  
Let \(D_0\in\mathcal U\) be an eventual‑sensitivity entourage for \((X,g)\). For any \(x\in X\) and any entourage \(E\in\mathcal U\), by eventual sensitivity of \((X,g)\), there exist \(q,k\in\mathbb N\) and $y\in E\bigl(g^q(x)\bigr)$ such that 
\(\;(g^{q+k}(x),\,g^k(y))\notin D_0\).  
Since \(f_{1}^{p\,q}(x)=g^q(x)\), and \(f_1^{p(q+k)}(x)=g^{q+k}(x)\), we get that
\[
y\;\in\;E\bigl(f_1^{p\,q}(x)\bigr),
\quad
\bigl(f_1^{p(q+k)}(x),\,f_{1}^{\,p\,k}(y)\bigr)\;\notin\;D_0.
\]
Hence \((X,f_{1,\infty})\) is eventually sensitive with the same entourage \(D_0\).

\end{proof}

Theorem \ref{SEq} establishes that a periodic non-autonomous system is syndetically equicontinuous if and only if its induced autonomous system is syndetically equicontinuous. Similarly, Theorem \ref{TSen} shows that a non-autonomous dynamical system is thickly sensitive if and only if its induced autonomous dynamical system is thickly sensitive.  

\begin{theorem}\label{SEq}
Let \((X,f_{1,\infty})\) be a periodic nonautonomous system on a compact uniform space \((X,\mathcal{U})\). Then \((X,f_{1,\infty})\) is syndetically equicontinuous  if and only if \((X,g)\) is syndetically equicontinuous.
\end{theorem}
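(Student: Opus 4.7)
The plan is to reduce both implications to the periodicity identity $f_1^{pk+r} = f_1^r \circ g^k$ for $0 \le r < p$, together with its reshuffled form $g^{k+1} = f_{r+1}^{p-r} \circ f_1^{pk+r}$ (obtained by regrouping the last $p-r$ factors of $g^{k+1}$ and using $f_{j+p} = f_j$), and to exploit that on the compact Hausdorff uniform space $(X,\mathcal{U})$ every continuous self-map, and hence every finite composition $f_i^j$, is uniformly continuous.

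For the ``if'' direction, I will fix $x \in X$ and $E \in \mathcal{U}$, and use uniform continuity of $f_1^r$ for $r = 0, \ldots, p-1$ to choose an entourage $D \in \mathcal{U}$ with $(a,b) \in D \Rightarrow (f_1^r(a), f_1^r(b)) \in E$ for every such $r$. Syndetic equicontinuity of $(X,g)$ at $x$ then supplies a neighborhood $V \ni x$ with $J_g(V,D)$ $M$-syndetic. The first identity shows that for each $k \in J_g(V,D)$ and $0 \le r < p$, the index $pk + r$ lies in $J_{f_{1,\infty}}(V, E)$. These blocks of $p$ consecutive integers turn $M$-syndeticity of $J_g(V, D)$ into $pM$-syndeticity of $J_{f_{1,\infty}}(V, E)$, giving the desired syndetic equicontinuity at $x$.

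For the ``only if'' direction, fix $x$ and $E$, and use uniform continuity of the ``tail'' compositions $f_{r+1}^{p-r}$, $r = 0, \ldots, p-1$, to select $D \in \mathcal{U}$ so that $(a,b) \in D$ implies $(f_{r+1}^{p-r}(a), f_{r+1}^{p-r}(b)) \in E$ for every $r$. Syndetic equicontinuity of $(X,f_{1,\infty})$ at $x$ yields $U \ni x$ with $J_{f_{1,\infty}}(U,D)$ $M$-syndetic. For any $n = pk + r$ in this set, the reshuffled identity together with the choice of $D$ gives $(g^{k+1}(x_1), g^{k+1}(x_2)) \in E$ for all $x_1, x_2 \in U$, so $k+1 \in J_g(U, E)$. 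A direct interval count --- every window $[pk_0, pk_0 + M - 1]$ in $\mathbb{N}$ contains such an $n$, which pins $k+1$ into a window of length $\lceil (M-1)/p \rceil + 1$ --- then shows $J_g(U, E)$ is syndetic.

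The main obstacle is the ``only if'' direction: the naive attempt of intersecting the syndetic set $J_{f_{1,\infty}}(U,D)$ with $p\mathbb{N}$ fails outright, since a syndetic subset of $\mathbb{N}$ may miss certain residue classes modulo $p$ entirely. The essential move is to avoid that intersection, and instead to convert an arbitrary $n = pk + r \in J_{f_{1,\infty}}(U,D)$ into the nearby multiple $p(k+1)$ by post-composing with $f_{r+1}^{p-r}$; this is precisely where periodicity (so that only finitely many distinct tail maps arise) and compactness (so that each such tail is uniformly continuous) both enter decisively.
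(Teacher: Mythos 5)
Your proposal is correct and follows essentially the paper's own argument: the harder implication (syndetic equicontinuity of \((X,f_{1,\infty})\) implies that of \((X,g)\)) is handled exactly as in the paper, by using compactness to get uniform continuity of the finitely many tail compositions \(f_{r+1}^{p-r}=f_p\circ\cdots\circ f_{r+1}\), post-composing at a time \(n=pk+r\) to land on \(g^{k+1}\), and then an interval count to transfer syndeticity. The only (harmless) divergence is in the converse direction, where the paper simply notes \(p\,J_g(V,F)\subseteq J_{f_{1,\infty}}(V,F)\) with the same entourage, while you additionally shrink the entourage via uniform continuity of the \(f_1^r\) to capture all residues modulo \(p\); both yield syndeticity.
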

\begin{proof}
Suppose first that \((X,f_{1,\infty})\) is syndetically equicontinuous. We show that \((X,g)\) is syndetically equicontinuous.\\
Let \(x\in X\) and \(E\in\mathcal U\) be an entourage.  By uniform continuity of $f_p\circ f_{p-1}\circ\cdots\circ f_{r+1}$, for$\quad r\in \{0,1,\dots,p-1\}$, there is an entourage \(D'\in\mathcal U\) such that
\[
\text{for all }(u,v)\in D'
\; \text{ we have }
\bigl(f_p\circ\cdots\circ f_{r+1}(u),\,f_p\circ\cdots\circ f_{r+1}(v)\bigr)
\in E,\
\text{for each }\,r \in \{0,1,..., p-1\}.
\]

Since \((X,f_{1,\infty})\) is syndetically equicontinuous at \(x\), for this \(D'\) there exists a neighborhood $U$ of $x$ such that \(J_{f_{1,\infty}}(U,D')\) is a syndetic set with bound \(L\).
Define
\[
J_g(U,E) \;=\;
\bigl\{\,n\in\mathbb N : \text{for}\,y_1,y_2\in U,\;(g^n(y_1),\,g^n(y_2))\in E\bigr\}.
\]
We claim that \(J_g(U,E)\) is syndetic with bound \(L+2\).\\
Let \(n_0\in\mathbb N\) be arbitrary.  Consider the block $\{\,n_0p,\;n_0p+1,\;\dots,\;n_0p + (L+1)p\}.$ Since \((L+1)p > L\), the syndeticity of \(J_{f_{1,\infty}}(U,D')\) (bound \(L\)) guarantees that there is some
\[
s\in J_{f_{1,\infty}}(U,D')\;\cap\;\{n_0p,\dots,n_0p+(L+1)p\}.
\]
Write \(s = q\,p + r\) with \(0\le r < p\); then \(n_0 \le q \le n_0 + L + 1\).  By definition of \(J_{f_{1,\infty}}(U,D')\), $\text{for}\,y_1,y_2\in U,\quad
\bigl(f_1^s(y_1),\,f_1^s(y_2)\bigr)\in D'.
$
Applying the  map  \(f_p\circ\cdots\circ f_{r+1}\) and using the fact that if \((u,v)\in D' \text{ then }(f_p\circ\cdots\circ f_{r+1}(u),\,f_p\circ\cdots\circ f_{r+1}(v))\in E\), we get that 
$
\text{for } y_1,y_2 \in U,
(f_p\circ\cdots\circ f_{r+1}(f_1^s(y_1)),\,f_p\circ\cdots\circ f_{r+1}(f_1^s(y_2))
= \bigl( \ g^{q+1}(y_1),g^{q+1}(y_2)\Bigr)
\;\in\;E.
$
Hence $q+1\;\in\;\{n_0,n_0+1,\dots,n_0 + (L+2)\}\;\cap\;J_g(U,E).$ Since \(n_0\) was arbitrary, \(J_g(U,E)\) is syndetic with bound \(L+2\).  This shows that \((X,g)\) is syndetically equicontinuous.\\
Now assume that \((X,g)\) is syndetically equicontinuous. Let \(x\in X\) and \(F\in \mathcal{U}\) be an entourage.  Then there is a neighborhood \(V\) of $x$ such that $J_g(U,F)
\;=\;
\{\,m\in\mathbb N:\text{for }\,y_1,y_2\in V,\;(g^m(y_1),g^m(y_2))\in F\}$ is syndetic. Since $g:=f^{\,p}_{1}=f_{p}\circ f_{p-1}\circ\cdots\circ f_{1}$, $\ p J_g(V,F)=\{pn: n\in J_g(V,F)\} \subseteq J_{f_{1,\infty}}(V,F)$ and hence $J_{f_{1,\infty}}(V,F)$ is syndetic as $p J_g(V,F)$ is syndetic.
\end{proof}
\begin{theorem}\label{TSen}
Let \((X,f_{1,\infty})\) be a periodic nonautonomous system on a compact uniform space \((X,\mathcal{U})\). Then \((X,f_{1,\infty})\) is thickly sensitive if and only if \((X,g)\) is thickly sensitive.
\end{theorem}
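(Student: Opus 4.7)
The plan is to prove each implication by translating thick blocks between the $g$-time scale and the $f_{1,\infty}$-time scale, with conversion factor equal to the period $p$.

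For the forward direction, I use the identity $g^m = f_1^{mp}$. If $D \in \mathcal{U}$ is a thick sensitivity entourage for $(X, f_{1,\infty})$, then for any non-empty open $U \subseteq X$ and any $K \in \mathbb{N}$, thickness of $N_{f_{1,\infty}}(U, D)$ supplies a consecutive block of length $Kp$; any such block contains $K$ consecutive multiples of $p$, say $\{mp, (m+1)p, \dots, (m+K-1)p\}$. Each such index witnesses separation of the corresponding $g$-iterates under $D$, so $\{m, m+1, \dots, m+K-1\} \subseteq N_g(U, D)$. Since $K$ is arbitrary, $N_g(U, D)$ is thick and the same entourage $D$ serves as a thick sensitivity entourage for $(X, g)$.

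For the converse, suppose $D \in \mathcal{U}$ is a thick sensitivity entourage for $(X, g)$. Set $\phi_r := f_p \circ f_{p-1} \circ \cdots \circ f_{r+1}$ for $r = 1, 2, \dots, p$, with the convention $\phi_p := \mathrm{id}_X$ and $\phi_{p-1} = f_p$. Compactness of $X$ makes each $\phi_r$ uniformly continuous, so I can pick $D_r \in \mathcal{U}$ with $(u,v) \in D_r \Rightarrow (\phi_r(u), \phi_r(v)) \in D$, and then set $D' := \bigcap_{r=1}^{p} D_r \in \mathcal{U}$. Fix any non-empty open $U$ and any $K \in \mathbb{N}$, and use thick sensitivity of $(X, g)$ to choose a block $\{m, m+1, \dots, m+K-1\} \subseteq N_g(U, D)$ together with witnesses $x_j, y_j \in U$ satisfying $(g^{m+j}(x_j), g^{m+j}(y_j)) \notin D$ for each $j \in \{0, \dots, K-1\}$. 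The periodicity-based factorization $g^{m+j}(x) = \phi_r\bigl(f_1^{(m+j-1)p + r}(x)\bigr)$, combined with the contrapositive of the uniform-continuity bound, forces $\bigl(f_1^{(m+j-1)p + r}(x_j), f_1^{(m+j-1)p + r}(y_j)\bigr) \notin D'$ for every $r \in \{1, \dots, p\}$. As the pair $(j, r)$ ranges, the times $(m+j-1)p + r$ exhaust the consecutive block $\{(m-1)p + 1, \dots, (m+K-1)p\}$ of length $Kp$, so $N_{f_{1,\infty}}(U, D')$ contains arbitrarily long blocks and is thick.

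The main obstacle is conceptual, in the converse direction: one has to allow the witness pair $(x_j, y_j)$ to vary with $j$, and to notice that each single $g$-separation at time $m+j$ can be pulled back through the $p$ distinct tail compositions $\phi_1, \dots, \phi_p$ to produce $p$ consecutive $f_{1,\infty}$-separations rather than just one. Compactness of $X$ is essential because it guarantees that each $\phi_r$ is uniformly continuous, allowing the pre-image entourages $D_r$ to admit the common refinement $D' = \bigcap_{r=1}^{p} D_r$ inside $\mathcal{U}$ that works simultaneously for every $r$.
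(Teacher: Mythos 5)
Your proposal is correct and follows essentially the same route as the paper: the easy direction extracts the multiples of \(p\) from a long block of \(f\)-separation times via \(f_1^{mp}=g^m\), and the hard direction uses uniform continuity (from compactness) of the tail compositions \(f_p\circ\cdots\circ f_{r+1}\) to pass from the \(g\)-entourage \(D\) to a refined entourage \(D'\). The only difference is organizational: you run the hard direction directly, turning each \(g\)-separation at time \(m+j\) into \(p\) consecutive \(f\)-separations at times \((m+j-1)p+1,\dots,(m+j)p\) with respect to \(D'\), whereas the paper reaches the same conclusion by contradiction via syndeticity of the complement; your packaging is a bit cleaner but not a different method.
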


\begin{proof}

Suppose \emph{p} is the period of \((X,f_{1,\infty})\), then \(g:=f^{\,p}_{1}=f_{p}\circ f_{p-1}\circ\cdots\circ f_{1}\). We first show that if $(X,g)$ is thickly sensitive then \((X,f_{1,\infty})\) is thickly sensitive.\\
Suppose \((X,g)\) is thickly sensitive. Then there is an entourage \(E_0\in\mathcal U\) such that for every non-empty open set \(U\subseteq X\), the set
\[
N_g(U,E_0)
=\{\,m\ge0 : \text{there exist } x,\ y\in U \text{ satisfying}\;(g^m( x),g^m( y))\notin E_0\}
\]
is thick. By the uniform continuity of each finite composition
\(\;f_p\circ\cdots\circ f_{r+1}\;\text{for }r \in \{0,1,\dots,p-1\}\),
we can choose an entourage \(E'\in\mathcal U\) such that
\[
\text{if }(x,y)\in E'
\text{ then }
\bigl(f_p\circ\cdots\circ f_{r+1}(x),\,f_p\circ\cdots\circ f_{r+1}(y)\bigr)
\in E_0, \text{ for each }r \in \{0,1,..,p-1\}.
\tag{1}
\]
Assume on the  contrary that \((X,f_{1,\infty})\) is not thickly sensitive.  Then for $ E'$, there is a non-empty open set \(U\subseteq X\) such that $N_{f_{1,\infty}}(U,E')$ is not thick and therefore $\mathbb{N}\setminus N_{f_{1,\infty}}(U,E')$ is syndetic which implies that there exists an integer \(L\) such that
\[
\text{for all} \ n\in\mathbb{N},\;\text{we have a}\;j\in\{n,\dots,n+L\} \text{ satisfying} \;(f_1^j(x),f_1^j(y))\in E' \text{ for all } 
\,x,y\in U.
\tag{2}
\]

Since \(N_g(U,E_0)\) is thick, choose \(k\in\mathbb{N}\) with \(k\,p>L\) then there exists an \(n_0\) such  that
\[
\{\,n_0,n_0+1,\dots,n_0+k\}
\;\subset\;
N_g(U,E_0).
\]
Consider the block of iterates
\(\{n_0p,n_0p+1,\dots,n_0p+L\}\).  By (2), there is some
\[
j\in\{n_0p,\dots,n_0p+L\}
\quad\text{such that }
\text{for  }\,x,y\in U,\;(f_1^j(x),f_1^j(y))\in E'.
\]
Write \(j = s\,p + r\) with \(0\le r<p\).  then \(n_0\le s\le n_0+k-1\), which implies 
\[
s+1\;\in\;\{n_0,\dots,n_0+k\}\;\subset\;N_g(U,E_0).
\]
On the other hand, $g^{\,s+1}
= f_p\circ\cdots\circ f_{r+1}\;\circ\;f_1^j$, and by (1) this implies
\[
\text{for all }\,x,y\in U,\quad
\bigl(g^{\,s+1}(x),\,g^{\,s+1}(y)\bigr)
\;\in\;E_0,
\]
contradicting that \(s+1\in N_g(U,E_0)\).  Therefore \((X,f_{1,\infty})\) must be thickly sensitive.

Now assume that \((X,f_{1,\infty})\) is thickly sensitive with sensitivity entourage \(D\in\mathcal{U}\).  We will show that \((X,g)\) is thickly sensitive with the same entourage \(D\).\\Let \(V\subseteq X\) be any non-empty open set and fix an arbitrary \(t\in\mathbb N\). Choose \(k\in\mathbb{N}\) with $k>(t+1)\,p$. Since \((X,f_{1,\infty})\) is thickly sensitive, there exists \(m_0\in\mathbb{N}\) such that
$
\{\,m_0,\;m_0+1,\;\dots,\;m_0+k\}
\;\subseteq\;
N_{f_1,\infty}(V,D)
$. We can write
$
m_0 \;=\; q\,p \;+\; r,
\
0 \;\le\; r < p.
$
Then $\{(q+1)p,\;(q+2)p,\;\dots,\;(q+1+t)p\}\subseteq \{\,m_0,\;m_0+1,\;\dots,\;m_0+k\}$ and therefore
\[
\{(q+1)p,\;(q+2)p,\;\dots,\;(q+1+t)p\}\subseteq N_{f_1,\infty}(V,D).
\]
Hence for each \(j\in \{1,2,\dots,t+1\}\), there exist $x_j,y_j \in V$ such that 
\(\bigl(f_1^{(q+j)p}(x_j),f_1^{(q+j)p}(y_j)\bigr)\notin D\). But$f_1^{\,(q+j)p} \;=\; g^{\,q+j}$, implying $\{\,q+1,\;q+2,\;\dots,\;q+1+t\}
\;\subseteq\;
N_g(V,D)$. Since \(t\) and $V$ were arbitrary, this shows that \((X,g)\) is thickly sensitive with entourage \(D\).
\end{proof}

Based on \cite[Theorem 3.2]{huang2018analogues}, we have the following result:
\begin{theorem}\label{msts}
    Let $(X,f)$ be an autonomous dynamical system on a uniform space $(X, \mathcal{U})$. If $(X,f)$ is multi-sensitive then $(X,f)$ is thickly sensitive. If $\mathrm{Trans}(X,f)\neq\emptyset$ then thick sensitivity of $(X,f)$ implies $(X,f)$ is multi-sensitive.
\end{theorem}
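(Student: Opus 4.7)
Both implications will be handled separately, and in each case the strategy is to show that one and the same entourage serves as the witness of both properties. The key algebraic identity underpinning both directions is this: whenever $x,y\in f^{-j}(U)$, setting $u:=f^{j}(x)$ and $v:=f^{j}(y)$ gives $u,v\in U$ with $(f^{m}(x),f^{m}(y))=(f^{\,m-j}(u),f^{\,m-j}(v))$, so that membership $m\in N_{f}(f^{-j}(U),D)$ translates into membership $m-j\in N_{f}(U,D)$ whenever $m\ge j+1$.

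For the forward implication, let $D$ be a multi-sensitivity entourage; the plan is to show that $D$ is also a thick-sensitivity entourage. Given a non-empty open $U$ and $k\in\mathbb{N}$, I would form the open preimages $W_j:=f^{-j}(U)$ for $j=0,1,\ldots,k$ and apply multi-sensitivity to the family $\{W_0,W_1,\ldots,W_k\}$ to obtain a common witness $m\in\bigcap_{j=0}^{k}N_{f}(W_j,D)$. The identity above then converts each $m\in N_{f}(W_j,D)$ into $m-j\in N_{f}(U,D)$, producing the block $\{m-k,m-k+1,\ldots,m\}$ of $k+1$ consecutive integers inside $N_{f}(U,D)$ and hence witnessing thick sensitivity. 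The main technical obstacle is that the preimage $W_j$ may be empty if $f$ is not surjective and, in addition, the common witness $m$ must satisfy $m\ge k+1$ so that the translated block lies entirely in $\mathbb{N}$; both issues can be addressed by enlarging the collection of open sets used in the multi-sensitivity application (for instance, by including further preimages $f^{-(k+1)}(U),f^{-(k+2)}(U),\ldots$ whenever they are non-empty) so as to simultaneously secure non-empty preimages and force a sufficiently large common witness.

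For the reverse implication, let $D$ be a thick-sensitivity entourage and fix non-empty open sets $V_1,\ldots,V_k$. I would choose a transitive point $x_0\in\mathrm{Trans}(X,f)$ and natural numbers $n_i$ with $f^{n_i}(x_0)\in V_i$ for each $i$. By continuity, the open set $W:=\bigcap_{i=1}^{k}f^{-n_i}(V_i)$ is a non-empty neighborhood of $x_0$ satisfying $f^{n_i}(W)\subseteq V_i$ for every $i$. Since $N_{f}(W,D)$ is thick, setting $N:=\max_i n_i$ provides some $m$ with $\{m,m+1,\ldots,m+N\}\subseteq N_{f}(W,D)$. For each $i$, the index $m+n_i$ lies in this block, so there exist $w_1,w_2\in W$ with $(f^{m+n_i}(w_1),f^{m+n_i}(w_2))\notin D$; pushing the pair forward by $f^{n_i}$ places $f^{n_i}(w_1),f^{n_i}(w_2)$ inside $V_i$ and yields $m\in N_{f}(V_i,D)$. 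Hence $m\in\bigcap_{i=1}^{k}N_{f}(V_i,D)$, establishing multi-sensitivity with entourage $D$.
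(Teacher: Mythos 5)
Your reverse implication is complete and is essentially the paper's own argument (compare the proof of the topological analogue, Theorem \ref{hmsts}): a transitive point yields a non-empty open set $W$ with $f^{n_i}(W)\subseteq V_i$ for all $i$, and a block of length $\max_i n_i$ inside the thick set $N_f(W,D)$ translates forward into a common element of the sets $N_f(V_i,D)$. The forward implication, however, has a genuine gap at exactly the point you flag. Applying multi-sensitivity to the preimages $W_j=f^{-j}(U)$, $0\le j\le k$, only yields \emph{some} common $m\in\bigcap_{j=0}^{k}N_f(W_j,D)$, and nothing in the definition prevents this $m$ from being $\le k$ (for instance $m=1$ could lie in every $N_f(W_j,D)$). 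Your proposed remedy of adding further preimages $f^{-(k+1)}(U),f^{-(k+2)}(U),\dots$ to the family does not force the common witness to be large: enlarging the family only imposes more membership conditions and gives no lower bound on the elements of the intersection. It also cannot repair the empty-preimage problem, since if $f^{-j}(U)=\emptyset$ for some $j\le k$ then every later preimage is empty as well, so there is nothing to add. As written, the step ``force a sufficiently large common witness'' fails, and you only obtain the truncated block $\{\max(1,m-k),\dots,m\}$, which need not have length $k+1$.

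The device used in the paper's proof of Theorem \ref{hmsts} (and in the cited Huang--Kolyada--Zhang result) is to shrink each preimage \emph{before} invoking multi-sensitivity: pick $x_j\in f^{-j}(U)$, choose a symmetric entourage $D'\in\mathcal U$ with $D'\circ D'\subseteq D$, and use continuity of $f^{j}$ to find a non-empty open set $U_j\subseteq f^{-j}(U)$ containing $x_j$ with $f^{j}(U_j)\subseteq D'\bigl[f^{j}(x_j)\bigr]$, so that $f^{j}(U_j)\times f^{j}(U_j)\subseteq D$ and hence $j\notin N_f(U_j,D)$. Any common witness $s\in\bigcap_{j=0}^{k}N_f(U_j,D)$ then automatically satisfies $s>k$, and since $U_j\subseteq f^{-j}(U)$ your translation identity gives $\{s-k,\dots,s\}\subseteq N_f(U,D)$, which is what thickness requires. (The non-emptiness of the preimages $f^{-j}(U)$ is a separate issue that the paper also glosses over; it is automatic under the surjectivity or transitivity hypotheses in force in the cited source. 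The witness-size problem, by contrast, is a step your argument genuinely needs and does not supply.)
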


Using Theorem \ref{TSen} and Theorem \ref{msts}, we get the following equivalence between thick sensitivity and multi-sensitivity for periodic non-autonomous dynamical systems.  
\begin{theorem}\label{TMS}
Let \((X,f_{1,\infty})\) be a periodic nonautonomous system on a compact Hausdorff uniform space \((X,\mathcal{U})\) such $\mathrm{Trans}(X,g)\neq\emptyset$. Then \((X,f_{1,\infty})\) is thickly sensitive if and only if \((X,f_{1,\infty})\) is multi-sensitive.
\end{theorem}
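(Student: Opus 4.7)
The strategy is to route the statement through the induced autonomous system $(X,g)$: chain Theorem~\ref{TSen} (thick-sensitivity transfer) with Theorem~\ref{msts} (the autonomous thick $\Leftrightarrow$ multi equivalence, whose nontrivial direction uses a transitive point). To close the loop one needs the analogue of Theorem~\ref{TSen} for multi-sensitivity; I will establish this as a preliminary step and then concatenate.

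For the auxiliary claim that $(X,f_{1,\infty})$ is multi-sensitive if and only if $(X,g)$ is, the easy direction uses the identity $g^n=f_1^{np}$: a common index $n$ witnessing multi-sensitivity of $g$ with entourage $E$ for non-empty open sets $V_1,\dots,V_m$ yields $np\in\bigcap_{i=1}^m N_{f_{1,\infty}}(V_i,E)$, so the same $E$ works for $f_{1,\infty}$. For the reverse I will mimic the uniform-continuity device from the proof of Theorem~\ref{TSen}: starting from a multi-sensitivity entourage $D$ for $(X,f_{1,\infty})$, compactness supplies an entourage $D'$ with $(u,v)\in D'\Rightarrow (f_1^r(u),f_1^r(v))\in D$ for every $r\in\{0,1,\dots,p-1\}$. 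Given $V_1,\dots,V_m$, multi-sensitivity of $(X,f_{1,\infty})$ produces a single $n$ and points $x_i,y_i\in V_i$ with $(f_1^n(x_i),f_1^n(y_i))\notin D$. Writing $n=qp+r$ with $0\le r<p$ and using the factorization $f_1^n=f_1^r\circ g^q$, the contrapositive of the choice of $D'$ forces $(g^q(x_i),g^q(y_i))\notin D'$ for every $i$, so the single index $q$ lies in $\bigcap_{i=1}^m N_g(V_i,D')$, giving multi-sensitivity of $(X,g)$.

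With this equivalence in hand, I concatenate: $(X,f_{1,\infty})$ is thickly sensitive $\Longleftrightarrow$ $(X,g)$ is thickly sensitive (Theorem~\ref{TSen}) $\Longleftrightarrow$ $(X,g)$ is multi-sensitive (Theorem~\ref{msts}, invoking $\mathrm{Trans}(X,g)\neq\emptyset$ in the thick-to-multi direction) $\Longleftrightarrow$ $(X,f_{1,\infty})$ is multi-sensitive (by the auxiliary step). Each implication is a direct application, so the theorem follows.

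The main obstacle I expect is the second half of the auxiliary step. The subtlety is that the common index $n$ produced by multi-sensitivity of $f_{1,\infty}$ need not be divisible by $p$, so the residue $r$ must be absorbed \emph{uniformly in $i$}; this is exactly what the uniform continuity of the finite family $\{f_1^r : 0\le r<p\}$ achieves, but one must select $D'$ before producing the witnesses $x_i,y_i$ to ensure a single $q$ serves all the $V_i$ simultaneously. Once this is in place, everything else is bookkeeping on top of the cited results.
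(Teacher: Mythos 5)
Your proposal is correct and follows essentially the same route as the paper: the paper likewise passes to the induced system $(X,g)$ and chains Theorem~\ref{TSen} with Theorem~\ref{msts}, transferring multi-sensitivity between $(X,f_{1,\infty})$ and $(X,g)$ by citing \cite{Salman2021Sensitivity} rather than proving it. Your explicit uniform-continuity argument for that transfer (choosing $D'$ for the finite family $\{f_1^r:0\le r<p\}$ before producing the witnesses, then writing $f_1^n=f_1^r\circ g^q$) is sound and simply makes self-contained what the paper handles by citation.
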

\begin{proof}
First assume that \((X,f_{1,\infty})\) is multi-sensitive, then as shown in \cite{Salman2021Sensitivity} $(X,g)$ is multi-sensitive and therefore by Theorem \ref{msts} $(X,g)$ is thickly sensitive and hence by Theorem \ref{TSen}, \((X,f_{1,\infty})\) is thickly sensitive.\\
Now, assume that \((X,f_{1,\infty})\) is thickly sensitive, then by Theorem \ref{TSen}, $(X,g)$ is thickly sensitive and  by Theorem \ref{msts}, $(X,g)$ is multi-sensitive and hence \((X,f_{1,\infty})\) is multi-sensitive.
\end{proof}
\begin{corollary}\label{AYms}
Let \((X,f_{1,\infty})\) be a periodic non-autonomous dynamical system on a compact Hausdorff uniform space $(X, \mathcal{U})$ such that $\mathrm{Trans}(X,g)\neq\emptyset$. If  $(X,f_{1,\infty})$ is minimal then it is either multi-sensitive or syndetically equicontinuous.
\end{corollary}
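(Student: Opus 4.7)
The plan is to deduce the corollary directly from Theorem \ref{T6} and Theorem \ref{TMS}, with no extra machinery required; the corollary is essentially an upgrade of the thick-sensitivity/syndetic-equicontinuity dichotomy to a multi-sensitivity/syndetic-equicontinuity dichotomy, made possible by the equivalence of thick and multi-sensitivity under the standing hypotheses.

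First, I would verify the hypotheses of Theorem \ref{T6}. The space $(X,\mathcal{U})$ is assumed to be compact Hausdorff, in particular Hausdorff; the system $(X,f_{1,\infty})$ is periodic; and $\mathrm{Trans}(X,g)\neq\emptyset$. Since $(X,f_{1,\infty})$ is also assumed minimal, Theorem \ref{T6} applies and yields the preliminary dichotomy that $(X,f_{1,\infty})$ is either thickly sensitive or syndetically equicontinuous.

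Next, I would dispose of the two cases separately. If $(X,f_{1,\infty})$ is syndetically equicontinuous, there is nothing more to prove. If instead $(X,f_{1,\infty})$ is thickly sensitive, then I would apply Theorem \ref{TMS}: its hypotheses (compact Hausdorff uniform space, periodic system, and $\mathrm{Trans}(X,g)\neq\emptyset$) match ours exactly, so thick sensitivity is equivalent to multi-sensitivity, and $(X,f_{1,\infty})$ is multi-sensitive. Combining the two cases yields the desired conclusion.

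There is no genuine obstacle, as all the difficult work has been absorbed into the earlier results: Theorem \ref{T6} rests on the identification $\mathrm{Trans}(X,g)=\mathrm{Trans}(X,f_{1,\infty})$ from Theorem \ref{T1}, while Theorem \ref{TMS} relies on Theorem \ref{TSen} (transferring thick sensitivity between $(X,f_{1,\infty})$ and $(X,g)$) together with the autonomous equivalence in Theorem \ref{msts}. The corollary itself is a one-line concatenation of these two dichotomies.
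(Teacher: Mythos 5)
Your proposal is correct and matches the paper's own proof exactly: the paper also applies Theorem \ref{T6} to obtain the thick-sensitivity/syndetic-equicontinuity dichotomy under minimality, and then invokes Theorem \ref{TMS} to replace thick sensitivity with multi-sensitivity. Nothing further is needed.
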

\begin{proof}
    Suppose \((X,f_{1,\infty})\) is minimal, then by Theorem \ref{T6} \((X,f_{1,\infty})\) is either syndetically equicontinous or thickly sensitive. By Theorem \ref{TMS} \((X,f_{1,\infty})\) is thickly sensitive if and only if it is multi-sensitive.
\end{proof}

\section{Auslander-Yorke Dichotomy type Theorems on Topological Spaces}

In this section, we introduce the notions of topological equicontinuity, syndetic topological equicontinuity, Hausdorff sensitivity, multi-Hausdorff sensitivity, syndetic Hausdorff sensitivity, and thick Hausdorff sensitivity for non-autonomous dynamical systems. We derive various Auslander-Yorke dichotomy type theorems for periodic non-autonomous dynamical systems on $T_3$ spaces.

\begin{definition}
Let \((X,f_{1,\infty})\) be a non-autonomous dynamical system on a topological space \(X\).  For \(x,y\in X\) we say \((x,y)\) is an \emph{equicontinuity pair}, and write $(x,y)\in \mathrm{EqP}\bigl(X,f_{1,\infty}\bigr)$,
if for every neighborhood \(O\in N_y\), there exist neighborhoods \(U\in N_x\) and \(V\in N_y\) such that for each, $n\in\mathbb N$, if $f_1^n(U)\,\cap\,V\neq\emptyset$ then $f_1^n(U)\subseteq O$. The system is called \emph{topologically equicontinuous} at \(x\in X\)  if
\(
\{x\}\times X\;\subseteq\;\mathrm{EqP}\bigl(X,f_{1,\infty}\bigr),
\)
and the system itself is said to be \emph{topologically equicontinuous} if
\(\mathrm{EqP}(X,f_{1,\infty})=X\times X\).  We denote the set of topological equicontinuity points by
\[
\mathrm{TEq}\bigl(X,f_{1,\infty}\bigr)
=\{\,x\in X: \{x\} \times X\subseteq \mathrm{EqP}(X,f_{1,\infty}) \}.
\]
\end{definition}

\begin{definition}
Let \((X,f_{1,\infty})\) be a non-autonomous dynamical system on a topological space \(X\).  For \(x,y\in X\) we say that \((x,y)\) is a \emph{syndetic equicontinuity pair}, and write $(x,y)\in \mathrm{SEqP}\bigl(X,f_{1,\infty}\bigr)$,
if for every \(O\in N_y\) there exist  \(U\in N_x\) and \(V\in N_y\) such that $
J_{f_{1,\infty}}(U,V;O)
=\bigl\{\,n\in\mathbb N : \text{ if } f_1^n(U)\cap V\neq\emptyset\; \text{then}\; f_1^n(U)\subseteq O\bigr\}
$ is syndetic.\\
The system  is called \emph{syndetically topologically equicontinuous} at \(x\in X\)  if
\(
\{x\}\times X\;\subseteq\;\mathrm{SEqP}\bigl(X,f_{1,\infty}\bigr),
\)
and the system itself is said to be \emph{syndetically topologically equicontinuous} if
\(\mathrm{SEqP}(X,f_{1,\infty})=X\times X\).  We denote the set of syndetic topological equicontinuity points by
\[
\mathrm{STEq}\bigl(X,f_{1,\infty}\bigr)
=\{\,x\in X: (x,y)\in \mathrm{EqP}(X,f_{1,\infty}),\ \text{for each }y\in X\}.
\]
\end{definition}
\begin{remark}
    It has been shown that on a compact Uniform space, topological equicontinuity and equicontinuity are equivalent \cite{royden1988real}, similarly one can show that syndetic topological equicontinuity and syndetic equicontinuity are equivalent on a compact space.
\end{remark}

\begin{definition}
    Let $(X, f_{1,\infty})$ be a non-autonomous dynamical system on a Hausdorff space $X$, and let $\mathcal{U}$ be an open cover of $X$. For any subset $V \subseteq X$, define:
\[
N_{f_{1,\infty}}(V, \mathcal{U}) := \left\{ n \in \mathbb{N} : \text{there are }\, u, v \in V \text{ satisfying } (u, v) \notin \bigcup_{U \in \mathcal{U}} f_1^{-n}(U) \times f_1^{-n}(U) \right\}.
\]

Then:
\begin{enumerate}
  \item $(X, f_{1,\infty})$ is called \emph{Hausdorff sensitive} if there exists a finite open cover $\mathcal{U}$ of $X$ such that for every non-empty open set $V \subseteq X$, $N_{f_{1,\infty}}(V, \mathcal{U}) \neq \emptyset.$ 

  \item $(X, f_{1,\infty})$ is called \emph{syndetically Hausdorff sensitive} if there exists a finite open cover $\mathcal{U}$ of $X$ such that for every non-empty open set $V \subseteq X$, $N_{f_{1,\infty}}(V, \mathcal{U})$ is syndetic.
  
  \item $(X, f_{1,\infty})$ is called \emph{thickly  Hausdorff sensitive} if there exists a finite open cover $\mathcal{U}$ of $X$ such that for every non-empty open set $V \subseteq X$, $N_{f_{1,\infty}}(V, \mathcal{U})$ is thick.

  \item $(X, f_{1,\infty})$ is called \emph{multi-Hausdorff sensitive} if there exists a finite open cover $\mathcal{U}$ of $X$ such that for every collection of non-empty open sets $V_1, \dots, V_m \subseteq X$, we have $\bigcap_{i=1}^m N_{f_{1,\infty}}(V_i, \mathcal{U}) \neq \emptyset.$
  
\end{enumerate}
\end{definition}

\begin{remark}
    It has been shown in \cite[Theorem 3.1]{salman2025topological} that for compact metric spaces, the notions of topological sensitivity and sensitivity are equivalent. It is further shown that similar relation can be shown for multi-topological sensitivity, syndetic topological sensitivity and thick topological sensitivity. Also, \cite[Theorem 3.2]{good2018what} shows that for a compact Hausdorff uniform space, the notions of Hausdorff sensitivity and sensitivity are equivalent for autonomous systems. Based on these results it can be shown that a similar relation holds for Hausdorff sensitivity, syndetic Hausdorff sensitivity, thick Hausdorff sensitivity and multi-Hausdorff sensitivity for non-autonomous dynamical systems.
\end{remark}

\begin{definition}
    Let $(X,f_{1,\infty})$ be a non-autonomous dynamical system on a Hausdorff space X. If $(x,y)\not\in \mathrm{TEq}\bigl(X,f_{1,\infty}\bigr)$ then there exists an $O\in\mathcal N_y$ such that for all $U\in \mathcal N_x$ and $V\in \mathcal N_y$, there exists an $n \in \mathbb N$ such that $f_1^n(U) \cap V\neq \emptyset $  and $f_1^n(U) \not \subseteq O$. Then $O\in \mathcal N_y$ is called a \emph{splitting neighborhood of $y$ with respect to $x$.}
\end{definition}

With  similar arguments, we have the following two results for non-autonomous dynamical systems from \cite{good2020equicontinuity} :

\begin{lemma}\label{CTE}
Let $(X, f_{1,\infty})$ be a non-autonomous dynamical system on a Hausdorff space $X$, 
 Then for any $x, y \in X$, any $n \in \mathbb{N}$, and for any neighborhood $O$ of $y$, there exist neighborhoods $U$ of $x$ and $V$ of $y$ such that for every $k \in \{1, 2, \ldots, n\}$, if $f_1^k(U) \cap V \neq \emptyset$ then $f_1^k(U) \subseteq O$.
\end{lemma}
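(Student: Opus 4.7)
The plan is to exploit the fact that only finitely many iterates $k \in \{1,\dots,n\}$ are under consideration, so that a finite-intersection argument suffices. For each such $k$ I will construct neighborhoods $U_k$ of $x$ and $V_k$ of $y$ that handle the condition for that particular $k$, and then set $U = \bigcap_{k=1}^{n} U_k$ and $V = \bigcap_{k=1}^{n} V_k$, which remain open neighborhoods of $x$ and $y$ respectively because only finitely many of them are intersected.

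To build $U_k$ and $V_k$, I will split into two cases based on whether $f_1^{k}(x) \in O$ or not. If $f_1^{k}(x) \in O$, then by continuity of the finite composition $f_1^{k}$ at $x$, there exists a neighborhood $U_k \in \mathcal{N}_x$ with $f_1^{k}(U_k) \subseteq O$; I then just take $V_k = X$ (or any neighborhood of $y$). Any $U \subseteq U_k$ will automatically satisfy $f_1^{k}(U) \subseteq O$, so the implication in the lemma holds trivially for this index $k$. If, on the other hand, $f_1^{k}(x) \notin O$, then $f_1^{k}(x) \neq y$ since $y \in O$. Here I invoke the Hausdorff property of $X$ to choose disjoint open sets $W_k \ni f_1^{k}(x)$ and $V_k \ni y$, and then apply continuity of $f_1^{k}$ to get $U_k \in \mathcal{N}_x$ with $f_1^{k}(U_k) \subseteq W_k$. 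For any $U \subseteq U_k$ and any $V \subseteq V_k$ we will then have $f_1^{k}(U) \cap V \subseteq W_k \cap V_k = \emptyset$, so the hypothesis $f_1^{k}(U) \cap V \neq \emptyset$ is never realized and the implication again holds vacuously.

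Finally, I will set $U = \bigcap_{k=1}^{n} U_k$ and $V = \bigcap_{k=1}^{n} V_k$ and verify that they work for every $k \in \{1, \dots, n\}$ by noting that $U \subseteq U_k$ and $V \subseteq V_k$ in each case, so the per-index conclusions transfer to the intersections. There is no real obstacle in this proof; the only mildly delicate point is the Hausdorff separation step in Case B, which is precisely the place where the hypothesis on $X$ is used, and it is worth flagging this in the write-up because the analogous statement with $n = \infty$ is exactly the definition of topological equicontinuity and is no longer automatic.
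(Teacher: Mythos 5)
Your proof is correct and is essentially the argument the paper relies on: the paper gives no proof of Lemma \ref{CTE}, deferring to the autonomous case in \cite{good2020equicontinuity}, where the same finite per-iterate case analysis (continuity of $f_1^k$ when $f_1^k(x)$ lies in $O$, Hausdorff separation of $f_1^k(x)$ from $y$ otherwise, then intersecting the finitely many neighborhoods) is used. The only cosmetic point is that in your Case A you should first replace $O$ by an open set $O'$ with $y\in O'\subseteq O$ (harmless, since $f_1^k(U)\subseteq O'\subseteq O$), so that continuity indeed yields $f_1^k(U_k)\subseteq O$ even when the neighborhood $O$ is not open.
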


\begin{lemma}\label{ls}
Let \((X,f_{1,\infty})\) be a non‐autonomous system on a Hausdorff space \(X\).  Suppose \((x,y)\notin \mathrm{EqP}\bigl(X,f_{1,\infty}\bigr)\) and \(O\) is a splitting neighbourhood of \(y\) with respect to \(x\).  Then for \(U \in \mathcal N_x\) and \( V \in \mathcal N_y \), the set $\bigl\{\,n\in\mathbb{N} : f_{1}^{n}(U)\cap V \neq\emptyset\text{ and }f_{1}^{n}(U)\not\subseteq O \bigr\}$ is infinite.
\end{lemma}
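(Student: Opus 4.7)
The plan is to argue by contradiction. Suppose the set
\[
S := \bigl\{n \in \mathbb{N} : f_{1}^{n}(U) \cap V \neq \emptyset \text{ and } f_{1}^{n}(U) \not\subseteq O\bigr\}
\]
is finite, and choose an integer $N$ with $N \ge s$ for every $s \in S$ (take $N = \max S$ when $S \neq \emptyset$, and $N = 1$ otherwise). The goal is to exhibit a single $n \in \mathbb{N}$ that must lie in $S$ yet exceeds $N$, giving the required contradiction.

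First I would invoke Lemma \ref{CTE} with the given $x$, $y$, the integer $N$, and the neighborhood $O$. This produces neighborhoods $U_0 \in \mathcal{N}_x$ and $V_0 \in \mathcal{N}_y$ such that for every $k \in \{1,\dots,N\}$, $f_{1}^{k}(U_0)\cap V_0 \neq \emptyset$ implies $f_{1}^{k}(U_0)\subseteq O$. Setting $U' := U \cap U_0 \in \mathcal{N}_x$ and $V' := V \cap V_0 \in \mathcal{N}_y$, the same implication persists with $U'$ and $V'$ in place of $U_0$ and $V_0$, because $f_{1}^{k}(U') \subseteq f_{1}^{k}(U_0)$ and $f_{1}^{k}(U') \cap V' \subseteq f_{1}^{k}(U_0) \cap V_0$; that is, a nonempty intersection at the refined level forces one at the original level, and then the image of the refined set is trapped inside $O$.

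Next I would apply the splitting-neighborhood hypothesis to $U'$ and $V'$: since $O$ is a splitting neighborhood of $y$ with respect to $x$, there exists $n \in \mathbb{N}$ with $f_{1}^{n}(U') \cap V' \neq \emptyset$ and $f_{1}^{n}(U') \not\subseteq O$. A case analysis on $n$ yields the contradiction. If $n \le N$, the inherited conclusion of Lemma \ref{CTE} forces $f_{1}^{n}(U') \subseteq O$, contradicting the second clause. If $n > N$, the inclusions $U' \subseteq U$ and $V' \subseteq V$ give $f_{1}^{n}(U) \cap V \supseteq f_{1}^{n}(U') \cap V' \neq \emptyset$, while $f_{1}^{n}(U') \subseteq f_{1}^{n}(U)$ together with $f_{1}^{n}(U') \not\subseteq O$ yields $f_{1}^{n}(U) \not\subseteq O$; hence $n \in S$, contradicting $n > N \ge \max S$.

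The only genuine obstacle is bookkeeping with set inclusions: the refined pair $U' \subseteq U$, $V' \subseteq V$ must be small enough to satisfy Lemma \ref{CTE} on the bounded range $\{1,\dots,N\}$, yet the non-emptiness and non-containment obtained for $(U',V')$ must still transfer back to $(U,V)$ in the remaining case $n > N$. Both transfers hinge on the monotonicity $f_{1}^{n}(U') \subseteq f_{1}^{n}(U)$ and $f_{1}^{n}(U') \cap V' \subseteq f_{1}^{n}(U) \cap V$, and beyond this elementary observation the Hausdorff hypothesis on $X$ plays no explicit role in the argument.
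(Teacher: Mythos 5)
Your proof is correct, and it is essentially the argument the paper has in mind: the paper omits a written proof and refers to the autonomous case in \cite{good2020equicontinuity}, whose standard argument is exactly your contradiction scheme — assume the set has a maximum $N$, invoke Lemma \ref{CTE} up to time $N$, intersect with $U$ and $V$, and play the splitting-neighbourhood property against the two cases $n\le N$ and $n>N$. The monotonicity transfers you spell out are the only bookkeeping needed, so no gap remains.
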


We now build some results to get a topological analogue of Auslander-Yorke dichotomy for periodic non-autonomous dynamical systems. 

\begin{theorem}\label{TEQ}
    Let \((X,f_{1,\infty})\) be a periodic non-autonomous dynamical system on a Hausdorff space $X$ such that \(\mathrm{Tran}(X,g)\neq\emptyset\). If $ \mathrm{TEq}(X,f_{1,\infty})\neq\emptyset $ then \(\mathrm{Tran}(X,f_{1,\infty})\subseteq \mathrm{TEq}(X,f_{1,\infty})\). Further, if \((X,f_{1,\infty})\) is minimal then it is topologically equicontinuous.
\end{theorem}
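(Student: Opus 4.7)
The plan is to follow the structure of the proof of Theorem~\ref{T3}, adapting it to the topological setting by replacing entourage neighborhoods with open neighborhoods and by using Lemma~\ref{CTE} in place of the finite intersection of continuity entourages. Let $p$ be the period and $g=f_1^p$. Pick some $z\in\mathrm{TEq}(X,f_{1,\infty})$, and fix $x\in\mathrm{Tran}(X,f_{1,\infty})$; by Theorem~\ref{T1} we have $x\in\mathrm{Tran}(X,g)$ as well. To show $x\in\mathrm{TEq}(X,f_{1,\infty})$, fix an arbitrary $y\in X$ and a neighborhood $O\in\mathcal{N}_y$, and produce $U\in\mathcal{N}_x$, $V\in\mathcal{N}_y$ witnessing the equicontinuity pair condition for $(x,y)$.

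First I would use topological equicontinuity at $z$ applied to the pair $(z,y)$ and the target neighborhood $O$ to obtain $U'\in\mathcal{N}_z$ and $V'\in\mathcal{N}_y$ such that whenever $f_1^n(U')\cap V'\neq\emptyset$, one has $f_1^n(U')\subseteq O$. Next, since $x\in\mathrm{Tran}(X,g)$, choose $N\in\mathbb{N}$ with $g^N(x)=f_1^{Np}(x)\in U'$, and use continuity of $f_1^{Np}$ at $x$ to get $U_1\in\mathcal{N}_x$ with $f_1^{Np}(U_1)\subseteq U'$. To handle the initial iterates $n\in\{1,\dots,Np\}$, which correspond in Theorem~\ref{T3} to the finite intersection $\bigcap_{m=1}^{Np-1}D_m$, I would invoke Lemma~\ref{CTE} with the pair $(x,y)$, the iterate bound $Np$, and the neighborhood $O$, to obtain $U_2\in\mathcal{N}_x$ and $V_2\in\mathcal{N}_y$ such that for each $k\in\{1,\dots,Np\}$, if $f_1^k(U_2)\cap V_2\neq\emptyset$ then $f_1^k(U_2)\subseteq O$. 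Set $U=U_1\cap U_2$ and $V=V'\cap V_2$.

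Verification splits into two ranges. For $1\le n\le Np$, if $f_1^n(U)\cap V\neq\emptyset$ then since $U\subseteq U_2$ and $V\subseteq V_2$ we have $f_1^n(U_2)\cap V_2\neq\emptyset$, and Lemma~\ref{CTE} gives $f_1^n(U)\subseteq f_1^n(U_2)\subseteq O$. For $n>Np$, write $n=Np+m$ with $m\ge 1$; then $f_1^n(U)=f_1^m(f_1^{Np}(U))\subseteq f_1^m(U')$, and if $f_1^n(U)\cap V\neq\emptyset$ then $f_1^m(U')\cap V'\neq\emptyset$, so the choice of $U'$, $V'$ forces $f_1^n(U)\subseteq f_1^m(U')\subseteq O$. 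Hence $(x,y)\in\mathrm{EqP}(X,f_{1,\infty})$, and since $y$ was arbitrary, $x\in\mathrm{TEq}(X,f_{1,\infty})$. For the minimality conclusion, $\mathrm{Tran}(X,f_{1,\infty})=X$ together with the inclusion just proved yields $\mathrm{TEq}(X,f_{1,\infty})=X$, i.e., topological equicontinuity of the system.

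I expect the main obstacle to be the coordination between the two neighborhood choices around $y$: the neighborhood $V'$ comes from topological equicontinuity at $z$ and governs the large-$n$ regime, while $V_2$ comes from Lemma~\ref{CTE} at $x$ and governs the first $Np$ iterates. One has to ensure that shrinking $V$ to lie in both does not interfere with the openness conditions required by either, and similarly for $U$ lying in both $U_1$ (so that $f_1^{Np}(U)\subseteq U'$ actually propagates into the $U'$-regime) and $U_2$ (so the Lemma~\ref{CTE} conclusion applies). This is purely a matter of taking finite intersections, but it is where the topological proof differs from the uniform one: in the uniform setting one only needed to track a single variable pair, while here the $(U,V,O)$ triple must be handled carefully in both regimes simultaneously.
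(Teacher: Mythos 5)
Your proposal is correct and follows essentially the same route as the paper's proof: topological equicontinuity at a point $z$ applied to the pair $(z,y)$, transitivity of $x$ under $g$ (via Theorem \ref{T1}) plus continuity of $g^N$ to steer a neighborhood of $x$ into the $z$-neighborhood for the tail iterates, Lemma \ref{CTE} for the finitely many initial iterates, and intersection of the resulting neighborhoods. The only (harmless, in fact slightly cleaner) difference is that you let Lemma \ref{CTE} cover the block $1\le n\le Np$ while the paper covers $1,\dots,Np-1$ and folds $n=Np$ into the tail regime.
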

\begin{proof}
Assume that \(\mathrm{TEq}(X,f_{1,\infty})\neq\emptyset\).  Let $x \in \mathrm{Trans}(X,f_{1,\infty})$ and $z \in \mathrm{TEq}(X,f_{1,\infty})$ and fix any \(y \in X\) and neighborhood \(O\) of \(y\).  Since \((z,y) \in \mathrm{EqP}(X,f_{1,\infty})\), there exist non-empty open sets $V',\ W \text{ of } X \text{ with } z \in W, \ 
 y \in V'$ such that $\text{for each},\ n\in\mathbb N$, if $
f_1^n(W)\cap V'\neq\emptyset$ then $f_1^n(W)\subseteq O.$\\
By Theorem \ref{T1}, \(x \in \mathrm{Trans}(X,g)=\mathrm{Trans}(X,f_{1,\infty})\). Therefore there is an \(N\in\mathbb N\) with $g^N(x) = f_1^{Np}(x) \in W.$\\
By Continuity of \(g^N\) there exists \(W' \subseteq X\) with $x \in W'\text{ such that}\ g^N(W') \subseteq W.$ Then for each \(j\ge0\),
\[
f_1^{Np+j}(W')
= f_1^j\bigl(f_1^{Np}(W')\bigr)
= f_1^j\bigl(g^N(W')\bigr)
\subseteq f_1^j(W),
\]
so whenever \(f_1^{Np+j}(W')\cap V\neq\emptyset\) we have \(f_1^j(W)\cap V\neq\emptyset\) and hence \(f_1^{Np+j}(W')\subseteq O\).

By Lemma \ref{CTE}, for  \(m\in \{1,2,\dots,Np-1\}\) there exist open sets $U'$ and $V''$,  $\text{ of } X \text{ with } x \in U', \ y \in V''$ such that if $f_1^m(U')\cap V''\neq\emptyset$
then $f_1^m(U')\subseteq O.$

Define $U \;=\; W' \;\cap\;U' \text{ and } V \;=\; V' \;\cap\;V''.$ Then \(x\in U\), \(y\in V\), and for any \(n\in\mathbb N\), if $f_1^n(U)\cap V\neq\emptyset$
then $f_1^n(U)\subseteq O.$ So \((x,y)\in \mathrm{EqP}(X,f_{1,\infty})\).  Since \(y\) and \(O\) were arbitrary, \(x \in \mathrm{TEq}(X,f_{1,\infty})\).  Therefore $\mathrm{Trans}(X,f_{1,\infty}) \;\subseteq\; \mathrm{TEq}(X,f_{1,\infty}).$
If \((X,f_{1,\infty})\) is minimal, then \(\mathrm{Trans}(X,f_{1,\infty})=X\), implying that \(\mathrm{TEq}(X,f_{1,\infty})=X\), i.e., the system is topologically equicontinuous.
\end{proof}

\begin{theorem}\label{T21}
Let \((X,f_{1,\infty})\) be a periodic non-autonomous dynamical system on a $T_3$ space $X$ such that \(\mathrm{Tran}(X,g)\neq\emptyset\). Suppose that \(x\in \mathrm{Trans}(X,f_{1,\infty})\). If there exists a \(y\in X\) with $(x,y)\notin \operatorname{EqP}(X,f_{1,\infty})$, then the system \((X,f_{1,\infty})\) is Hausdorff sensitive.
\end{theorem}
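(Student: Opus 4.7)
The plan is to exhibit a concrete finite open cover that witnesses Hausdorff sensitivity, with the cover tailored to the splitting neighbourhood supplied by the non-equicontinuity of $(x,y)$. Since $(x,y)\notin\mathrm{EqP}(X,f_{1,\infty})$, there exists a splitting neighbourhood $O$ of $y$ with respect to $x$, and by the very definition of splitting neighbourhood $O\neq X$. Using that $X$ is $T_3$, I would choose an open set $O_1$ with $y\in O_1\subseteq\overline{O_1}\subseteq O$, and propose $\mathcal{U}=\{O,\,X\setminus\overline{O_1}\}$ as the sensitivity cover. This is a finite open cover of $X$, and the inclusion $O_1\subseteq\overline{O_1}\subseteq O$ makes the following observation immediate: whenever $f_1^n(u)\in O_1$ and $f_1^n(v)\notin O$, no single element of $\mathcal{U}$ contains both images, so such an $n$ lies in $N_{f_{1,\infty}}(V,\mathcal{U})$ for any open $V$ containing $u$ and $v$.

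Given an arbitrary non-empty open $V\subseteq X$, the task therefore reduces to producing $n\in\mathbb{N}$ and $u,v\in V$ with $f_1^n(u)\in O_1$ and $f_1^n(v)\notin O$. I would transfer the splitting information from a neighbourhood of $x$ to the set $V$ as follows. By Theorem \ref{T1}, $x\in\mathrm{Trans}(X,g)$, so the $g$-orbit of $x$ is dense; pick $a\ge 0$ with $g^a(x)=f_1^{ap}(x)\in V$, and set $m=ap$ and $W=(f_1^m)^{-1}(V)$, an open neighbourhood of $x$ satisfying $f_1^m(W)\subseteq V$. Applying Lemma \ref{ls} to the neighbourhoods $W$ of $x$ and $O_1$ of $y$ yields infinitely many $n$ with $f_1^n(W)\cap O_1\neq\emptyset$ and $f_1^n(W)\not\subseteq O$; I would pick such an $n$ with $n>m$ and set $k=n-m$.

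The key step, and the one I expect to be the technical pinch point, is converting splitting data on $W$ into splitting data on $V$, which is non-trivial in the non-autonomous setting because $f_1^n$ and $f_1^k\circ f_1^m$ are generally distinct maps. This is where periodicity is essential: because $m=ap$ is a multiple of the period $p$, the identity $f_{m+j}=f_j$ (for all $j\ge 1$) gives $f_1^n=f_1^k\circ f_1^m$, and consequently $f_1^n(W)=f_1^k(f_1^m(W))\subseteq f_1^k(V)$. It follows that $f_1^k(V)\cap O_1\neq\emptyset$ and $f_1^k(V)\not\subseteq O$, which supplies $u,v\in V$ with $f_1^k(u)\in O_1$ and $f_1^k(v)\notin O$; hence $k\in N_{f_{1,\infty}}(V,\mathcal{U})$, and since $V$ was arbitrary, $\mathcal{U}$ is a Hausdorff sensitivity cover. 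The obstacle I would flag is precisely the need to align $m$ with the period: without the ability to choose $m$ as a multiple of $p$, the periodicity identity above fails, and this is exactly where both hypotheses of the theorem are used crucially, since Theorem \ref{T1} (which rests on $\mathrm{Trans}(X,g)\neq\emptyset$ and periodicity) is what lets us approach $V$ by $g$-iterates of $x$ rather than by arbitrary $f_{1,\infty}$-iterates.
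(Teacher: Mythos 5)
Your proposal is correct and follows essentially the same route as the paper's proof: a two-element cover built from the splitting neighbourhood via regularity, pulling an arbitrary open set $V$ back to a neighbourhood of $x$ through a $g$-iterate (using Theorem \ref{T1}), applying Lemma \ref{ls}, and shifting by the multiple of the period. The only differences are cosmetic — you use one nested closed neighbourhood instead of the paper's two, and you correctly state the inclusion $f_1^n(W)\subseteq f_1^{k}(V)$ where the paper loosely writes an equality — neither of which changes the argument.
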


\begin{proof}
Suppose \emph{p} is the period of \((X,f_{1,\infty})\), then \(g:=f^{\,p}_{1}=f_{p}\circ f_{p-1}\circ\cdots\circ f_{1}\). Let \(x\) and \(y\) be as in the hypothesis. Then by Theorem \ref{T1}, $x\in \mathrm{Trans}(X,g)$, and by hypothesis, there exists a splitting neighborhood $O$ of $y$ with respect to $x$. Since \(X\) is $T_3$, we can choose open neighborhoods \(V_1\) and \(V_2\) of \(y\) satisfying $\overline{V}_1\subseteq O \ \text{and} \ \overline{V}_2\subseteq V_1.$

Define the finite open cover $\mathcal{U}=\{V_1,\, X\setminus \overline{V_2}\}.$ Now, let \(W\subseteq X\) be an arbitrary non-empty open set. Since \(x\in \mathrm{Trans}(X,g)\) therefore,  there exists an integer \(n\in\mathbb{N}\) such that $x\in W'=g^{-n}(W).$

By Lemma \ref{ls}, for \(W'\in \mathcal N_x\)  and \(V_1 \in \mathcal N_y\), there exists an integer \(m>pn\) with $f_1^m(W')\cap V_2\neq\emptyset \quad\text{and}\quad f_1^m(W')\not\subseteq O$. Since $W' = g^{-n}(W)=(f_{1}^{pn})^{-1}(W) \text{ we get } f_1^m(W') = f_1^{m-pn}(W)$. For \(k=m-pn\), we have $f_1^k(W)\cap V_2\neq\emptyset \quad\text{and}\quad f_1^k(W)\not\subseteq O.$\\
In particular, there exist points \(a,b\in W\) such that $f_1^k(a)\notin O \quad \text{and} \quad f_1^k(b)\in V_2.$ Thus, $\{f_1^k(a),f_1^k(b)\}\cap V_1=\{f_1^k(b)\} \quad \text{and} \quad \{f_1^k(a),f_1^k(b)\}\cap (X\setminus \overline{V_2})=\{f_1^k(a)\}.$

Thus the images of \(a\) and \(b\) lie in different members of the cover \(\mathcal{U}\). Since \(W\) was arbitrary, it follows by definition that \((X,f_{1,\infty})\) is Hausdorff sensitive.
\end{proof}

The following theorem is the topological analogue of the Auslander-Yorke dichotomy for periodic non-autonomous dynamical systems.

\begin{theorem}\label{TAY1}
Let \((X,f_{1,\infty})\) be a periodic non-autonomous dynamical system on a $T_3$ space $X$ such that \(\mathrm{Tran}(X,g)\neq\emptyset\). If \((X,f_{1,\infty})\) is minimal then it is either topologically equicontinuous or Hausdorff sensitive.
\end{theorem}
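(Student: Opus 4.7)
The plan is to reduce the statement to the two preceding results by performing a case split on whether the set $\mathrm{TEq}(X,f_{1,\infty})$ is empty. The crucial observation, on which both branches hinge, is that minimality of $(X,f_{1,\infty})$ forces $\mathrm{Trans}(X,f_{1,\infty}) = X$, so every point of $X$ is simultaneously a transitive point and a candidate for witnessing equicontinuity or its failure.

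First I would handle the easy branch. Suppose $\mathrm{TEq}(X,f_{1,\infty}) \neq \emptyset$. Then the hypotheses of Theorem \ref{TEQ} are exactly the hypotheses of the present theorem supplemented by non-emptiness of $\mathrm{TEq}$, and its second conclusion directly gives that $(X,f_{1,\infty})$ is topologically equicontinuous. No additional work is needed in this case.

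Next I would treat the complementary branch: $\mathrm{TEq}(X,f_{1,\infty}) = \emptyset$. Fix any $x \in X$. Since minimality yields $x \in \mathrm{Trans}(X,f_{1,\infty})$ while $x \notin \mathrm{TEq}(X,f_{1,\infty})$, the very definition of $\mathrm{TEq}$ produces some $y \in X$ with $(x,y) \notin \mathrm{EqP}(X,f_{1,\infty})$. Theorem \ref{T21} applies verbatim to this pair (all its hypotheses hold, in particular $\mathrm{Trans}(X,g) \neq \emptyset$) and concludes that $(X,f_{1,\infty})$ is Hausdorff sensitive.

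The proof is essentially an assembly of Theorems \ref{TEQ} and \ref{T21}, so there is no real obstacle once those are in hand; the only point deserving a little care is to verify that the global hypothesis $\mathrm{Trans}(X,g) \neq \emptyset$ is threaded through each invocation, since it is precisely this assumption that unlocks the identification $\mathrm{Trans}(X,g) = \mathrm{Trans}(X,f_{1,\infty})$ from Theorem \ref{T1} that both invoked theorems rest upon. The dichotomy is then exhaustive and mutually exclusive (a topologically equicontinuous system cannot be Hausdorff sensitive), closing the argument.
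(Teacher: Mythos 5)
Your proposal is correct and follows essentially the same route as the paper: split on whether $\mathrm{TEq}(X,f_{1,\infty})$ is empty, applying Theorem \ref{TEQ} together with minimality in the nonempty case and Theorem \ref{T21} (using that minimality gives a transitive point while emptiness of $\mathrm{TEq}$ gives a non-equicontinuity pair) in the empty case. No gaps.
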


\begin{proof}
Suppose \((X,f_{1,\infty})\) is minimal. There are now two possibilities:

\medskip
\noindent\textbf{Case 1:} \(\mathrm{TEq}(X,f_{1,\infty})\neq \emptyset\). Then by Theorem \ref{TEQ}, \(\mathrm{TEq}(X,f_{1,\infty})=X\). Hence, the system \((X,f_{1,\infty})\) is topologically equicontinuous.

\medskip
\noindent\textbf{Case 2:} Suppose no point is topologically equicontinuous. In this case, for every \(x\in X\) there exists some \(y\in X\) such that $(x,y)\notin \operatorname{EqP}(X,f_{1,\infty})$ and by the Theorem \ref{T21}, the system \((X,f_{1,\infty})\) is Hausdorff sensitive.

\medskip
Thus, a minimal non-autonomous dynamical system \((X,f_{1,\infty})\) must be either topologically equicontinuous or Hausdorff sensitive.
\end{proof}

With similar arguments, we have the following two results for non-autonomous dynamical systems from \cite{good2020equicontinuity} :

\begin{lemma}\label{LTE}
Let \((X, f_{1,\infty})\) be a non-autonomous dynamical system on a Hausdorff space \(X\),  Suppose that \((x,y) \in \operatorname{EqP}(X, f_{1,\infty})\). Then either $y\notin \Omega_{f_{1,\infty}}(x)$  or else \(y\in\omega_{f_{1,\infty}}(x)\).
\end{lemma}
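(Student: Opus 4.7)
The plan is to establish the dichotomy directly, by assuming $y \in \Omega_{f_{1,\infty}}(x)$ and deducing that $y \in \omega_{f_{1,\infty}}(x)$. Unfolding the definition of the $\omega$-limit set, this reduces to showing that for every open neighborhood $O$ of $y$ and every $N \in \mathbb{N}$, there exists some integer $n > N$ with $f_1^n(x) \in O$; the conclusion $y \in \omega_{f_{1,\infty}}(x)$ then follows immediately from $y \in \bigcap_{N \in \mathbb{N}} \overline{\{f_1^n(x) : n > N\}}$.

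First, I would fix an arbitrary neighborhood $O$ of $y$ and an arbitrary $N \in \mathbb{N}$. Invoking the hypothesis that $(x,y) \in \operatorname{EqP}(X, f_{1,\infty})$ applied to $O$, I would extract neighborhoods $U \in \mathcal{N}_x$ and $V \in \mathcal{N}_y$ having the property that for every $n \in \mathbb{N}$, the condition $f_1^n(U) \cap V \neq \emptyset$ forces $f_1^n(U) \subseteq O$. Next, using that $y \in \Omega_{f_{1,\infty}}(x)$, applied to these specific neighborhoods $U$ and $V$ together with $N$, I would produce some $n > N$ for which $f_1^n(U) \cap V \neq \emptyset$. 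Combining these two facts gives $f_1^n(U) \subseteq O$, and since $x \in U$, in particular $f_1^n(x) \in O$, which is exactly the required orbit-hit.

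As $O \in \mathcal{N}_y$ and $N \in \mathbb{N}$ were arbitrary, this shows $y \in \omega_{f_{1,\infty}}(x)$, completing the dichotomy. I do not anticipate a genuine obstacle here: the proof is essentially a bookkeeping argument that threads the definitions of $\operatorname{EqP}$, $\Omega_{f_{1,\infty}}$, and $\omega_{f_{1,\infty}}$ together. The only mild subtlety to watch is that one must feed the very neighborhoods $U, V$ produced by the equicontinuity pair into the non-wandering condition, rather than two independently chosen neighborhoods; otherwise the implication $f_1^n(U) \cap V \neq \emptyset \Rightarrow f_1^n(U) \subseteq O$ cannot be triggered. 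Notably, this argument does not require the $T_3$ hypothesis used elsewhere in the section, only the Hausdorff assumption already present in the statement.
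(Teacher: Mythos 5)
Your proof is correct, and it is essentially the argument the paper intends: the paper defers to the autonomous-case proof in Good--Leek--Mitchell, which is exactly this definitional bookkeeping of feeding the neighborhoods $U,V$ produced by the equicontinuity pair into the non-wandering condition to hit every neighborhood $O$ of $y$ at times $n>N$, hence $y\in\omega_{f_{1,\infty}}(x)$. No gap; as you note, neither $T_3$ nor in fact any separation axiom is used in this step.
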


\begin{lemma}\label{LTE2}
Let \((X, f_{1,\infty})\) be a non-autonomous dynamical system on a Hausdorff space \(X\), if \((X, f_{1,\infty})\) is topologically equicontinuous at \(x\). Then, $\omega_{f_{1,\infty}}(x) = \Omega_{f_{1,\infty}}(x).$
\end{lemma}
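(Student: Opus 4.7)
The plan is to prove both inclusions between $\omega_{f_{1,\infty}}(x)$ and $\Omega_{f_{1,\infty}}(x)$, with the nontrivial direction being essentially an immediate application of Lemma \ref{LTE}.

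First I would dispose of the easy inclusion $\omega_{f_{1,\infty}}(x) \subseteq \Omega_{f_{1,\infty}}(x)$. If $y \in \omega_{f_{1,\infty}}(x)$, then for every $V \in \mathcal N_y$ and every $N \in \mathbb N$, there exists $n > N$ with $f_1^n(x) \in V$. For any $U \in \mathcal N_x$ we have $x \in U$, so $f_1^n(x) \in f_1^n(U) \cap V$, whence $f_1^n(U) \cap V \neq \emptyset$. By definition this gives $y \in \Omega_{f_{1,\infty}}(x)$. This step uses nothing beyond the definitions and does not require topological equicontinuity.

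For the reverse inclusion, the hypothesis that $(X,f_{1,\infty})$ is topologically equicontinuous at $x$ means precisely that $(x,y) \in \mathrm{EqP}(X,f_{1,\infty})$ for every $y \in X$. Fix any $y \in \Omega_{f_{1,\infty}}(x)$. Then $(x,y) \in \mathrm{EqP}(X,f_{1,\infty})$, so by Lemma \ref{LTE}, either $y \notin \Omega_{f_{1,\infty}}(x)$ or $y \in \omega_{f_{1,\infty}}(x)$. The first alternative contradicts our choice of $y$, so we must have $y \in \omega_{f_{1,\infty}}(x)$. Since $y$ was arbitrary, this yields $\Omega_{f_{1,\infty}}(x) \subseteq \omega_{f_{1,\infty}}(x)$.

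Combining the two inclusions gives $\omega_{f_{1,\infty}}(x) = \Omega_{f_{1,\infty}}(x)$. The only potential pitfall here is conceptual rather than technical: one has to notice that Lemma \ref{LTE} is stated pointwise in the pair $(x,y)$, so that topological equicontinuity at $x$ (which gives $(x,y) \in \mathrm{EqP}$ for every $y$) is exactly the quantifier strength required to convert the dichotomy of Lemma \ref{LTE} into the desired set-theoretic equality. No separate compactness or periodicity assumption is needed for this argument.
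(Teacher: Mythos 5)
Your proof is correct and follows exactly the intended route: the paper leaves this lemma to the reader (citing the autonomous argument in \cite{good2020equicontinuity}), and that argument is precisely your combination of the trivial inclusion $\omega_{f_{1,\infty}}(x)\subseteq\Omega_{f_{1,\infty}}(x)$ with the dichotomy of Lemma \ref{LTE} applied to every pair $(x,y)$. No changes needed.
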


\begin{proposition}
    
    Let \((X,f_{1,\infty})\) be a periodic non-autonomous dynamical system on a Hausdorff space $X$ such that \(\mathrm{Tran}(X,g)\neq\emptyset\). If $ \mathrm{TEq}(X,f_{1,\infty)}\neq\emptyset $ then \(\mathrm{Tran}(X,f_{1,\infty})= \mathrm{TEq}(X,f_{1,\infty})\).
\end{proposition}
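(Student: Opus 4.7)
The plan is to mirror the argument used for the uniform-space analogue in Theorem \ref{T4}(2), with Lemma \ref{LTE2} playing the role that \cite[Theorem 2.3]{akin1997transitive} played there. One inclusion, namely $\mathrm{Trans}(X,f_{1,\infty}) \subseteq \mathrm{TEq}(X,f_{1,\infty})$, is already provided by Theorem \ref{TEQ} under the standing hypotheses, so the entire content of the proof is the reverse inclusion $\mathrm{TEq}(X,f_{1,\infty}) \subseteq \mathrm{Trans}(X,f_{1,\infty})$.

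To establish the reverse inclusion, I would fix an arbitrary $x \in \mathrm{TEq}(X,f_{1,\infty})$ and aim to show $\overline{\mathcal{O}_{f_{1,\infty}}(x)} = X$. First I would invoke the hypothesis $\mathrm{Trans}(X,g) \neq \emptyset$ together with \cite[Proposition 1]{DegirmenciKocak2003} to conclude that $(X,g)$ is topologically transitive; since $\mathcal{O}_g(y) \subseteq \mathcal{O}_{f_{1,\infty}}(y)$ for all $y$, this forces $(X,f_{1,\infty})$ to be topologically transitive as well. Next, using that $X$ has no isolated points (our standing assumption on $X$), topological transitivity upgrades to $\Omega_{f_{1,\infty}}(y) = X$ for every $y \in X$, exactly as argued in the proof of Theorem \ref{T4}(2).

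With this in hand, the topological equicontinuity of the system at $x$ is the key ingredient: Lemma \ref{LTE2} gives $\omega_{f_{1,\infty}}(x) = \Omega_{f_{1,\infty}}(x)$, and combining this with the previous step yields $\omega_{f_{1,\infty}}(x) = X$. Since $\omega_{f_{1,\infty}}(x) \subseteq \overline{\mathcal{O}_{f_{1,\infty}}(x)}$ by definition of the $\omega$-limit set, it follows that $\overline{\mathcal{O}_{f_{1,\infty}}(x)} = X$, so $x \in \mathrm{Trans}(X,f_{1,\infty})$. As $x$ was arbitrary, $\mathrm{TEq}(X,f_{1,\infty}) \subseteq \mathrm{Trans}(X,f_{1,\infty})$, and equality follows.

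I do not anticipate a serious obstacle here: the proof is essentially a routine assembly of Theorem \ref{TEQ}, Lemma \ref{LTE2}, and the standard fact that topological transitivity on a space without isolated points forces $\Omega_{f_{1,\infty}}(x) = X$ for all $x$. The only minor point to verify carefully is the implication ``$\Omega_{f_{1,\infty}}(x) = X$ for every $x$'' from topological transitivity plus the no-isolated-points hypothesis, but since the paper already uses this implication in the proof of Theorem \ref{T4}(2), I would simply cite that argument rather than reprove it.
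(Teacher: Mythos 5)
Your proposal is correct and follows essentially the same route as the paper: one inclusion from Theorem \ref{TEQ}, and the reverse inclusion by combining Lemma \ref{LTE2} with topological transitivity of \((X,f_{1,\infty})\) (obtained from \(\mathrm{Trans}(X,g)\neq\emptyset\)) to get \(\omega_{f_{1,\infty}}(x)=\Omega_{f_{1,\infty}}(x)=X\). The extra detail you supply (citing \cite[Proposition 1]{DegirmenciKocak2003} and the no-isolated-points step, as in Theorem \ref{T4}(2)) is exactly what the paper's briefer proof leaves implicit.
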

\begin{proof}
  By Theorem \ref{TEQ} we have \(\mathrm{Tran}(X,f_{1,\infty})\subseteq \mathrm{TEq}(X,f_{1,\infty})\). Let $x\in \mathrm{TEq}(X,f_{1,\infty})$, then by Lemma \ref{LTE2}, $\Omega_{f_{1,\infty}}(x)=\omega_{f_{1,\infty}}(x)$ and by transitivity of \((X,f_{1,\infty})\), we get that $\Omega_{f_{1,\infty}}(x)=X$, so $\omega_{f_{1,\infty}}(x)=X$. Hence  $x\in \mathrm{Trans}(X,f_{1,\infty})$ and therefore \(\mathrm{TEq}(X,f_{1,\infty})\subseteq \mathrm{Trans}(X,f_{1,\infty})\) and \(\mathrm{Tran}(X,f_{1,\infty})= \mathrm{TEq}(X,f_{1,\infty})\).
\end{proof}

The following theorem provides sufficient conditions under which a periodic non-autonomous dynamical system is topologically equicontinuous if and only its induced autonomous system is topologically equicontinuous.

\begin{theorem}\label{thm:periodic-nonauto-tec}
Let \((X,f_{1,\infty})\) be a periodic non-autonomous dynamical system on a Hausdorff space $X$ and let \((X,g)\) be its induced autonomous system. If each $f_i$ is feebly open and bijective, then $(X,f_{1,\infty})$ is topologically equicontinuous if and only if $(X,g)$ is topologically equicontinuous.

\end{theorem}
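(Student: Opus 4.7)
The forward direction $(X,f_{1,\infty})$ topologically equicontinuous $\Rightarrow$ $(X,g)$ topologically equicontinuous does not use the hypothesis at all. Fix $x,y\in X$ and a neighborhood $O$ of $y$, and apply topological equicontinuity of $(X,f_{1,\infty})$ at the pair $(x,y)$ to obtain $U\in\mathcal{N}_x$ and $V\in\mathcal{N}_y$ with $f_1^n(U)\cap V\neq\emptyset\Rightarrow f_1^n(U)\subseteq O$ for every $n\in\mathbb{N}$. Since $g^m=f_1^{mp}$, restricting to $n=mp$ gives the required equicontinuity condition for $g$ at $(x,y)$.

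For the reverse direction, fix $x,y\in X$ and a neighborhood $O$ of $y$, and write every $n\in\mathbb{N}$ as $n=mp+r$ with $0\le r<p$, so that $f_1^n=f_1^r\circ g^m$. Since each $f_i$ is bijective, so is $f_1^r$, and by continuity $O_r:=(f_1^r)^{-1}(O)$ is an open neighborhood of the well-defined point $y_r:=(f_1^r)^{-1}(y)$. For every $r\in\{0,1,\ldots,p-1\}$, topological equicontinuity of $g$ at the pair $(x,y_r)$ with respect to $O_r$ supplies neighborhoods $U_r\in\mathcal{N}_x$ and $\tilde V_r\in\mathcal{N}_{y_r}$ such that $g^m(U_r)\cap\tilde V_r\neq\emptyset\Rightarrow g^m(U_r)\subseteq O_r$ for every $m\in\mathbb{N}$. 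The residual small iterates $n\in\{1,\ldots,p-1\}$ (corresponding to $m=0$) would be handled separately by invoking Lemma~\ref{CTE} with $n=p-1$, yielding neighborhoods $(U',V')$ of $x$ and $y$ respectively with the required property for every $k\in\{1,\ldots,p-1\}$.

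To synthesize these data into a single neighborhood pair for $(X,f_{1,\infty})$, define
\[
U\;:=\;U'\cap\bigcap_{r=0}^{p-1}U_r,\qquad V\;:=\;V'\cap\bigcap_{r=0}^{p-1}\operatorname{int}\bigl(f_1^r(\tilde V_r)\bigr).
\]
For $V$ to be a genuine neighborhood of $y$ one needs $y\in\operatorname{int}(f_1^r(\tilde V_r))$ for every $r$, which is exactly the point at which the hypothesis that each $f_i$ is feebly open and bijective enters: the argument requires $f_1^r$ to be open at every point, so that the image $f_1^r(\tilde V_r)$ is itself a neighborhood of $y=f_1^r(y_r)$. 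Granting this, verification is routine: given $n=mp+r$ with $f_1^n(U)\cap V\neq\emptyset$, bijectivity of $f_1^r$ turns this into $g^m(U)\cap(f_1^r)^{-1}(V)\neq\emptyset$ and $(f_1^r)^{-1}(V)\subseteq\tilde V_r$ by construction, so for $m\ge 1$ topological equicontinuity of $g$ yields $g^m(U)\subseteq O_r$ and hence $f_1^n(U)=f_1^r(g^m(U))\subseteq O$, while the case $m=0$, $r\in\{1,\ldots,p-1\}$ is precisely what Lemma~\ref{CTE} handles.

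The principal obstacle is promoting ``feebly open and bijective'' to ``open at every point'' for each $f_i$, and then by induction for the composition $f_1^r$. Under the standard convention (the image of every non-empty open set has non-empty interior), this upgrade is not automatic, so the cleanest remedy is to isolate a short preliminary lemma asserting that a continuous feebly open bijection on the ambient Hausdorff space is open (equivalently, a homeomorphism), and then apply it inductively along $f_1^r=f_r\circ\cdots\circ f_1$ to guarantee that each $f_1^r(\tilde V_r)$ is a neighborhood of $y$. With that lemma in hand, the rest of the argument is bookkeeping.
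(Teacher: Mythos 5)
Your route is essentially the paper's: the forward direction by restricting to the iterates $n=mp$, and for the converse the decomposition $n=mp+r$, pulling $y$ back through $f_1^r$ (surjectivity) to the point $y_r$, applying topological equicontinuity of $(X,g)$ at the pairs $(x,y_r)$, setting $U=\bigcap_r U_r$ and $V=V_0\cap\bigcap_r\operatorname{int}\bigl(f_1^r(\tilde V_r)\bigr)$, and using injectivity of $f_1^r$ to cancel it in the verification; your explicit use of Lemma~\ref{CTE} for the initial block $n\in\{1,\dots,p-1\}$ (the case $m=0$, $r\neq 0$) is a careful touch that the paper leaves implicit. So the architecture is fine, and you have correctly isolated the one step where feeble openness must enter, namely that $V$ be a genuine neighbourhood of $y$.

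However, the repair you propose for that step is a genuine gap: the preliminary lemma that a continuous feebly open bijection of a Hausdorff space is open (equivalently a homeomorphism) is false. Sketch of a counterexample: let $Y=[0,1)\cup[2,3]$ and $Z=[0,2]$ with their usual topologies and let $h\colon Y\to Z$ be the obvious continuous bijection ($x\mapsto x$ on $[0,1)$, $x\mapsto x-1$ on $[2,3]$); $h$ is feebly open but not open, since $h\bigl([2,2.1)\bigr)=[1,1.1)$. Now let $X$ be the disjoint clopen union of copies $Y_n$, $n\in\mathbb{Z}$, with $Y_n\cong Y$ for $n\le 0$ and $Y_n\cong Z$ for $n\ge 1$, and let $f$ shift $Y_n$ onto $Y_{n+1}$, using homeomorphisms on every piece except $h$ on $Y_0$. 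Then $f$ is a continuous, bijective, feebly open self-map of a Hausdorff space without isolated points which is not open. Hence ``feebly open and bijective'' cannot be upgraded to ``open at every point,'' and what your construction of $V$ actually needs is the pointwise statement $y\in\operatorname{int}\bigl(f_1^r(\tilde V_r)\bigr)$, i.e.\ openness of $f_1^r$ at the specific point $y_r$, which neither hypothesis supplies. Be aware that the paper's own proof is no better at this exact spot: it asserts that $V$ is a neighbourhood of $y$ merely because each $\operatorname{int}\bigl(f_1^r(W_r)\bigr)$ is non-empty, which likewise does not place $y$ in that interior. So you have located the crux correctly, but your proposed lemma does not close it; it would only do so under extra hypotheses (for instance compactness of $X$, where a continuous bijection is automatically a homeomorphism and feeble openness is not even needed).
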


\begin{proof} 
Suppose \emph{p} is the period of \((X,f_{1,\infty})\), then \(g:=f^{\,p}_{1}=f_{p}\circ f_{p-1}\circ\cdots\circ f_{1}\). Assume \((X,f_{1,\infty})\) is topologically equicontinuous.  Then for all \(x,y\in X\) and any neighborhood \(O\) of \(y\), there are neighborhoods \(U\in \mathcal{N}_x\) and \(V\in \mathcal N_y\) such that if $f_1^n(U)\cap V\neq\emptyset$ then $f_1^n(U)\subseteq O$ for each $n\in\mathbb N.$ In particular, take \(n=m\,p\), $m\in \mathbb N$.  Since 
\(
f_1^{mp}
= (f_p\circ\cdots\circ f_1)^m
= g^m,
\)
we have 
\[
\text{if }g^m(U)\cap V\neq\emptyset
\;\text{then}\;
g^m(U)\subseteq O, \ 
\text{ for every }\,m\in\mathbb N,
\]
so \((X,g)\) is topologically equicontinuous.

\medskip

Now assume that \((X,g)\) is topologically equicontinuous.  Fix arbitrary \(x,y\in X\) and a neighborhood \(O \in \mathcal N_y\).  We will produce neighborhoods $U$ of $x$, $V$ of $y$ such that if $f_1^n(U)\cap V\neq\emptyset$ then $f_1^n(U)\subseteq O$, for each \(n\in \mathbb N\).

By topological equicontinuity of \((X,g)\), there are neighborhoods $U_0$ of $x$ and $V_0$ of $y$ such that if $g^q(U_0)\cap V_0\neq\emptyset$ then $g^q(U_0)\subseteq O$, for each $q\in\mathbb N.$

For \(r \in \{1,\dots,p-1\}\), ler $w_r\in X$ be such that  \(f_1^r(w_r)=y\).  Then choose a neighborhood $V_r$ of $w_r$ such that
    \(\;f_1^r(V_r)\subseteq O.\)
    By topological equicontinuity of \((X,g)\), there exist neighborhoods $U_r$ of x and $W_r$ of $w_r$ such that
    \[
    \text{if }g^q(U_r)\cap W_r\neq\emptyset
    \;\text{then}\;
    g^q(U_r)\subseteq V_r, 
    \text{ for each }\,q\in\mathbb N.
    \]

Set
    \[
      U \;=\;\bigcap_{r=0}^{p-1}U_r,
      \qquad
      V \;=\; V_0\;\cap\;\bigcap_{r=1}^{p-1}Int\bigl(f_1^r(W_r)\bigr).
    \]
    Note that each \(Int(f_1^r(W_r))\) is non-empty by feeble openness of each $f_i$, so \(V\) is a neighborhood of \(y\).

Now write any \(n\in\mathbb N\) uniquely as \(n=p\,q + r\) with \(0\le r\le p-1\).  
    - If \(r=0\), then
      \[
        f_1^n(U)
        = f_1^{pq}(U)
        = g^{\,q}(U).
      \]
      Hence \(f_1^n(U)\cap V\neq\emptyset\) implies \(g^q(U)\cap V_0\neq\emptyset\), whence \(g^q(U)\subseteq O\) which implies that \ \(f_1^n(U)\subseteq O\).

     If \(1\le r\le p-1\), then
      \[
        f_1^n \;=\; f_1^{pq + r}
        \;=\;
        f_1^r\circ g^q.
      \]
      If
      $
        f_1^n(U)\cap V
        =
        \bigl(f_1^r\circ g^q\bigr)(U)\;\cap\; int\bigl(f_1^r(W_r)\bigr)
        \;\neq\;\emptyset,
      $ which implies that $f_1^r\circ g^q(U) \cap f_1^r(W_r)\neq \emptyset$  
      then
      \(\;g^q(U)\cap W_r\neq\emptyset\)
      and hence \(g^q(U)\subseteq V_r\), for each $r \in \{1,...,p-1\}$. Thus $g^q(U)\subseteq V_r$ and applying \(f_1^r\) we get
      \[
        f_1^n(U)
        = f_1^r\bigl(g^q(U)\bigr)
        \;\subseteq\;
        f_1^r(V_r)
        \;\subseteq\;
        O.
      \]

In all cases \(f_1^n(U)\subseteq O\) whenever \(f_1^n(U)\cap V\neq\emptyset\).  Thus \((X,f_{1,\infty})\) is topologically equicontinuous, completing the proof.
\end{proof}

We now build some results to establish a topological analogue for Theorem \ref{T6}.

\begin{theorem}\label{HS}
Let \((X,f_{1,\infty})\) be a periodic non-autonomous dynamical system on a Hausdorff space $X$. Then $(X,f_{1,\infty})$ is Hausdorff sensitive if and only if $(X,g)$ is Hausdorff sensitive.

\end{theorem}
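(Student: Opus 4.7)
The plan is to establish both directions of the equivalence by exploiting the identity $f_1^{qp+r}=f_1^{r}\circ g^{q}$, which follows from $p$-periodicity of $(X,f_{1,\infty})$. For the forward direction, suppose $(X,g)$ is Hausdorff sensitive with finite open cover $\mathcal{U}'$. For any non-empty open $V\subseteq X$, pick $m\in\mathbb{N}$ and $u,v\in V$ so that no element of $\mathcal{U}'$ contains both $g^m(u)$ and $g^m(v)$. Since $g^m=f_1^{mp}$, the integer $mp$ lies in $N_{f_{1,\infty}}(V,\mathcal{U}')$, so the \emph{same} cover $\mathcal{U}'$ witnesses Hausdorff sensitivity of $(X,f_{1,\infty})$.

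For the backward direction, suppose $(X,f_{1,\infty})$ is Hausdorff sensitive with finite open cover $\mathcal{U}$. The candidate cover for $(X,g)$ is the finite common refinement
\[
\mathcal{V}\;=\;\bigvee_{r=0}^{p-1}(f_1^{r})^{-1}(\mathcal{U}),
\]
whose elements are the non-empty intersections $\bigcap_{r=0}^{p-1}(f_1^{r})^{-1}(U_r)$ with $U_r\in\mathcal{U}$. The key observation is that if no element of $\mathcal{U}$ contains both $f_1^{qp+r}(u)$ and $f_1^{qp+r}(v)$, then pulling back through $f_1^{r}$ shows no element of $(f_1^{r})^{-1}(\mathcal{U})$ contains both $g^{q}(u)$ and $g^{q}(v)$; since every element of $\mathcal{V}$ is contained in some element of $(f_1^{r})^{-1}(\mathcal{U})$, no element of $\mathcal{V}$ does either.

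The main obstacle is that a direct application of Hausdorff sensitivity to a given non-empty open $V$ yields some $n_0\in N_{f_{1,\infty}}(V,\mathcal{U})$, but the decomposition $n_0=qp+r$ may give $q=0$, producing only the useless witness $g^0=\mathrm{id}$. To force $q\ge 1$, I shrink $V$ using continuity: pick any $x\in V$, and for each $i=1,\dots,p-1$ choose an open neighborhood $W_i$ of $x$ and an element $U_{x,i}\in\mathcal{U}$ with $f_1^{i}(W_i)\subseteq U_{x,i}$. Then $W=V\cap\bigcap_{i=1}^{p-1}W_i$ is a non-empty open subset of $V$ for which $\{1,\dots,p-1\}\cap N_{f_{1,\infty}}(W,\mathcal{U})=\emptyset$. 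Applying Hausdorff sensitivity to $W$ then produces $n_0\ge p$, so that $q\ge 1$, and the corresponding witnesses $u,v\in W\subseteq V$ establish $q\in N_g(V,\mathcal{V})$, proving $(X,g)$ is Hausdorff sensitive with cover $\mathcal{V}$.
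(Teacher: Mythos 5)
Your proposal is correct, and its two key constructions coincide with the paper's: the easy direction uses the very same cover together with the identity $g^m=f_1^{mp}$, and the hard direction uses the common refinement $\bigvee_{r=0}^{p-1}(f_1^{r})^{-1}(\mathcal{U})$ of the pullback covers. The difference lies in the execution of the hard direction. The paper argues by contradiction and reads ``$\mathcal{U}$ fails to be a sensitivity cover for $g$'' as ``for every $n$ the whole image $g^n(W)$ of some fixed open set $W$ lies inside a single member of the cover''; with the pair-based definition of $N_g(W,\mathcal{U})$ this is a strictly stronger statement than $N_g(W,\mathcal{U})=\emptyset$ once the cover has three or more members, and the paper's block argument also does not treat the times $n<p$ (the case $q=0$). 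You instead argue directly: a witness pair $u,v$ for $n_0=qp+r\in N_{f_{1,\infty}}(W,\mathcal{U})$ is pulled back through $f_1^{r}$, using $f_1^{qp+r}=f_1^{r}\circ g^{q}$, to a witness pair for $q\in N_g(V,\mathcal{V})$, and you force $q\ge 1$ by shrinking $V$ via continuity of $f_1^{1},\dots,f_1^{p-1}$ so that the times $1,\dots,p-1$ cannot occur in $N_{f_{1,\infty}}(W,\mathcal{U})$. This buys rigor at exactly the two points the paper glosses over: your argument works verbatim with the pairwise definition of $N(\cdot,\mathcal{U})$ and explicitly eliminates the degenerate quotient $q=0$, at the modest cost of the extra shrinking step; the paper's contradiction argument is shorter but, as written, depends on the unjustified single-member reading of non-sensitivity.
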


\begin{proof}
Suppose \((X,f_{1,\infty})\) is Hausdorff‐sensitive with sensitive finite open cover \(\mathcal U_0\).  For \(r\in \{1,\dots,p-1\}\) define
\[
\mathcal U_r
=\bigl\{\,f_1^{-r}(V):V\in\mathcal U_0\bigr\},
\]
which is again a finite open cover of \(X\).  Let
\(\displaystyle
\mathcal U
=\mathcal U_0\vee\mathcal U_1\vee\cdots\vee\mathcal U_{p-1}
\)
be their common refinement. Then $\mathcal{U}$ is a finite open cover for $X$. We claim that \(\mathcal U\) is a sensitive cover for \((X,g)\).

Assume if possible that \((X,g)\) is not Hausdorff sensitive then for the finite open cover $\mathcal{U}$, there exists a non-empty open set \(W\subseteq X\) such that for each \(n\in\mathbb N\), there exists a set $U_n\in\mathcal U$ such that $g^n(W)\;\subseteq\;U_n$. 

Let $k \in \mathbb{N}$, write \(k=qp+r\) with \(0\le r<p\). By our assumption there exists a $U_q$ such that $g^q(W)\subseteq U_q$, Since $\mathcal U
=\mathcal U_0\vee\mathcal U_1\vee\cdots\vee\mathcal U_{p-1}$, there is some $V_k \in\mathcal U_0$ with $U_q \subseteq f_1^{-r}(V_k).$ Hence $g^q(W)
\subseteq U_p\subseteq f_1^{-r}(V_r)$ which implies that
$f_1^r\bigl(g^k(W)\bigr)
\subseteq V_r.$ But \(f_1^r\circ g^q = f_1^{\,qp+r} = f_1^k\), therefore $f_1^k(W)\subseteq V_k \in \mathcal{U}_0$, hence for all $i \in \mathbb{N}, \ f_1^i(W) \subseteq V_i\in \mathcal{U}_0$ contradicting Hausdorff‐sensitivity of \((X,f_{1,\infty})\).  Thus \((X,g)\) is Hausdorff‐sensitive with sensitive cover \(\mathcal U\).\\
Now, suppose \((X,g)\) is Hausdorff sensitive with sensitive finite open cover \(\mathcal U\).  We show \((X,f_{1,\infty})\) is Hausdorff‐sensitive with the same cover. Let \(W\subseteq X\) be any non-empty open set.  By sensitivity of \(\mathcal U\) for \(g\), there exists an \(m\in\mathbb N\) such that $g^m(W)\;\not\subseteq\;U,
\ \text{for any }U\in\mathcal U.$ Let \(n = m\,p\).  Then $f_1^n(W) = f_1^{mp}(W)= g^m(W)$, so \(f_1^n(W)\not\subseteq U\), for any \(U\in\mathcal U\).  Hence \(\mathcal U\) is also a sensitive cover for \((X,f_{1,\infty})\).
\end{proof}

\begin{theorem}\label{STEq}
    Let \((X,f_{1,\infty})\) be a periodic non-autonomous dynamical system on a Hausdorff space $X$ such that  \(\mathrm{Tran}(X,g)\neq\emptyset\). If $ \mathrm{STEq}(X,f_{1,\infty)}\neq\emptyset $ then \(\mathrm{Tran}(X,f_{1,\infty})\subseteq \mathrm{STEq}(X,f_{1,\infty})\). Further, if \((X,f_{1,\infty})\) is minimal then it is syndetically topologically equicontinuous.
\end{theorem}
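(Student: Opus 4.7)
The plan is to mirror the proof of Theorem~\ref{TEQ}, carrying the syndeticity of the relevant return sets through the construction. Fix $x \in \mathrm{Trans}(X, f_{1,\infty})$, $z \in \mathrm{STEq}(X, f_{1,\infty})$, an arbitrary $y \in X$, and a neighborhood $O$ of $y$; the goal is to produce $U \in \mathcal{N}_x$ and $V \in \mathcal{N}_y$ with $J_{f_{1,\infty}}(U, V; O)$ syndetic. By syndetic topological equicontinuity at $z$, I would first obtain $W \in \mathcal{N}_z$ and $V' \in \mathcal{N}_y$ such that $J_{f_{1,\infty}}(W, V'; O)$ is syndetic.

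Next, by Theorem~\ref{T1} we have $x \in \mathrm{Trans}(X, g)$, so there exists $N \in \mathbb{N}$ with $g^N(x) = f_1^{Np}(x) \in W$; continuity of $g^N$ yields $W' \in \mathcal{N}_x$ with $f_1^{Np}(W') \subseteq W$. For each $m \in \{1, \dots, Np-1\}$, Lemma~\ref{CTE} provides $U_m \in \mathcal{N}_x$ and $V_m \in \mathcal{N}_y$ such that $f_1^m(U_m) \cap V_m \neq \emptyset$ implies $f_1^m(U_m) \subseteq O$. I would then set
\[
U \;=\; W' \cap \bigcap_{m=1}^{Np-1} U_m,
\qquad
V \;=\; V' \cap \bigcap_{m=1}^{Np-1} V_m.
\]

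The crux is to verify $Np + J_{f_{1,\infty}}(W, V'; O) \subseteq J_{f_{1,\infty}}(U, V; O)$. For $k \in J_{f_{1,\infty}}(W, V'; O)$, since $U \subseteq W'$ and $V \subseteq V'$, we have $f_1^{Np+k}(U) = f_1^k\bigl(f_1^{Np}(U)\bigr) \subseteq f_1^k(W)$; if $f_1^{Np+k}(U) \cap V \neq \emptyset$, then $f_1^k(W) \cap V' \neq \emptyset$, so $f_1^k(W) \subseteq O$ and hence $f_1^{Np+k}(U) \subseteq O$. Moreover $\{1, \dots, Np-1\} \subseteq J_{f_{1,\infty}}(U, V; O)$ by construction of the $U_m, V_m$. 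Since a translate by a fixed constant of a syndetic set is syndetic (with an enlarged bound), $J_{f_{1,\infty}}(U, V; O)$ contains a syndetic subset and is itself syndetic. As $y$ and $O$ were arbitrary, $x \in \mathrm{STEq}(X, f_{1,\infty})$. The minimality statement then follows immediately, since $\mathrm{Trans}(X, f_{1,\infty}) = X$ forces $\mathrm{STEq}(X, f_{1,\infty}) = X$.

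The main obstacle I expect is the careful bookkeeping for the short-time indices: the translated set $Np + J_{f_{1,\infty}}(W, V'; O)$ contains no integers below $Np$, so without the Lemma~\ref{CTE} step there would be a long initial gap and syndeticity could fail at the start. Pairing the translate of the syndetic set coming from $z$ with the finite cover of early indices from Lemma~\ref{CTE}, and then checking that these all lie in $J_{f_{1,\infty}}(U, V; O)$ after the final intersection, is the delicate part; the monotonicity observation that shrinking $W \supseteq U$ and $V' \supseteq V$ preserves the defining implication of $J$ is straightforward but must be recorded explicitly.
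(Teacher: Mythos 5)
Your proof is correct and follows essentially the same route as the paper: transfer the syndetic set $J_{f_{1,\infty}}(W,V';O)$ attached to the syndetically topologically equicontinuous point $z$ over to $x$ via $g$-transitivity (Theorem \ref{T1}) and continuity of $g^N$, obtaining $Np + J_{f_{1,\infty}}(W,V';O) \subseteq J_{f_{1,\infty}}(U,V;O)$ and concluding by syndeticity of translates. The only difference is your additional Lemma \ref{CTE} step covering the indices $\{1,\dots,Np-1\}$, which the paper omits because a translate of a syndetic set is already syndetic (the initial gap of fixed length $Np$ is simply absorbed into the syndeticity bound), so that extra step is harmless but not needed.
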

\begin{proof}
Assume that \(\mathrm{STEq}(X,f_{1,\infty})\neq\emptyset\).  Let
$
x \in \mathrm{Trans}(X,f_{1,\infty})
\text{ and }
z \in \mathrm{TEq}(X,f_{1,\infty}),
$
and fix any \(y \in X\) and neighborhood \(O\) of \(y\).  Since \((z,y) \in \mathrm{SEqP}(X,f_{1,\infty})\), there exist non-empty open sets $W\in \mathcal{N}_z$, $V'\in \mathcal{N}_y$ such that $
J_{f_{1,\infty}}(U,V;O)
$ is syndetic. By Theorem \ref{T1}, \(x \in \mathrm{Trans}(X,g)\).  Hence there is an \(N\in\mathbb N\) with
$
g^N(x) = f_1^{Np}(x) \in W.
$
Continuity of \(g^N\) gives a neighborhood \(U \in \mathcal{N}_x\) with
$
g^N(U) \subseteq W.
$
Then for each \(j\in J_{f_{1,\infty}}(W,V;O)\),
\[
f_1^{Np+j}(U)
= f_1^j\bigl(f_1^{Np}(U)\bigr)
= f_1^j\bigl(g^N(U)\bigr)
\subseteq f_1^j(W),
\]
therefore whenever \(f_1^{Np+j}(U)\cap V\neq\emptyset\) we have \(f_1^j(W)\cap V\neq\emptyset\) and hence \(f_1^{Np+j}(U)\subseteq O\), as \(f_1^{j}(W)\subseteq O\). Thus $Np + J_{f_{1,\infty}}(W,V;O)  \subseteq J_{f_{1,\infty}}(U,V;O)$ and since $J_{f_{1,\infty}}(W,V;O)$ is syndetic so is  $J_{f_{1,\infty}}(U,V;O)$ implying that \((x,y)\in \mathrm{SEqP}(X,f_{1,\infty})\). Since \(y\) and \(O\) were arbitrary, \(x \in \mathrm{STEq}(X,f_{1,\infty})\).  Therefore
\[
\mathrm{Trans}(X,f_{1,\infty})
\;\subseteq\;
\mathrm{STEq}(X,f_{1,\infty}).
\]
If \((X,f_{1,\infty})\) is minimal then \(\mathrm{Trans}(X,f_{1,\infty})=X\), implying \(\mathrm{STEq}(X,f_{1,\infty})=X\), i.e., the system is syndetically topologically equicontinuous.
\end{proof}

\begin{theorem}\label{THs}
Let \((X,f_{1,\infty})\) be a periodic non-autonomous dynamical system on a  $T_3$ space $X$ such that \(\mathrm{Tran}(X,g)\neq\emptyset\). Suppose that \(x\in \mathrm{Trans}(X,f_{1,\infty})\) . If there exists a \(y\in X\) with $(x,y)\notin \operatorname{SEqP}(X,f_{1,\infty})$, then the system \((X,f_{1,\infty})\) is thickly Hausdorff sensitive.
\end{theorem}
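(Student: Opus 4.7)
The plan is to mimic the proof of Theorem \ref{T21}, upgrading ``infinitely many bad times'' (obtained there via Lemma \ref{ls}) to ``arbitrarily long blocks of bad times.'' The upgrade is forced by the elementary fact that the complement in $\mathbb N$ of a non-syndetic set contains arbitrarily long intervals, i.e.\ is thick.

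First, I would unpack the hypothesis. Since $(x,y)\notin \operatorname{SEqP}(X,f_{1,\infty})$, there is an open neighborhood $O$ of $y$ such that for every $U\in\mathcal N_x$ and every $V\in\mathcal N_y$ the set $J_{f_{1,\infty}}(U,V;O)$ fails to be syndetic, and hence
\[
J_{f_{1,\infty}}(U,V;O)^{c}=\bigl\{n\in\mathbb N : f_1^n(U)\cap V\neq\emptyset\text{ and }f_1^n(U)\not\subseteq O\bigr\}
\]
is thick. Next, exactly as in Theorem \ref{T21}, I would use the $T_3$ axiom to choose open sets $V_2, V_1$ with $y\in V_2\subseteq\overline{V_2}\subseteq V_1\subseteq\overline{V_1}\subseteq O$ and form the finite open cover $\mathcal U=\{V_1,\ X\setminus\overline{V_2}\}$, which I will show is a thick Hausdorff sensitivity cover for $(X,f_{1,\infty})$.

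For the main step, fix an arbitrary non-empty open $W\subseteq X$ and an arbitrary $k\in\mathbb N$. By Theorem \ref{T1}, $x\in\mathrm{Trans}(X,g)$, so there is an $s\in\mathbb N$ with $g^s(x)=f_1^{sp}(x)\in W$; continuity of $g^s$ produces an open $W'\ni x$ with $g^s(W')\subseteq W$. Applying the unpacked hypothesis to the pair $(W',V_2)$, the set $J_{f_{1,\infty}}(W',V_2;O)^{c}$ is thick, so I can pick $m>sp$ with $\{m,m+1,\dots,m+k\}\subseteq J_{f_{1,\infty}}(W',V_2;O)^{c}$. For each such $j$ there exist $w_1,w_2\in W'$ with $f_1^j(w_1)\in V_2$ and $f_1^j(w_2)\notin O$; the chain $\overline{V_2}\subseteq V_1\subseteq\overline{V_1}\subseteq O$ forces $f_1^j(w_1)\in V_1\setminus(X\setminus\overline{V_2})$ and $f_1^j(w_2)\in (X\setminus\overline{V_2})\setminus V_1$, so no single member of $\mathcal U$ contains both images. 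Setting $u_i=f_1^{sp}(w_i)=g^s(w_i)\in W$ and using $f_1^{j-sp}(u_i)=f_1^j(w_i)$ gives $j-sp\in N_{f_{1,\infty}}(W,\mathcal U)$, whence $\{m-sp,\dots,m+k-sp\}\subseteq N_{f_{1,\infty}}(W,\mathcal U)$. Since $k$ and $W$ were arbitrary, $\mathcal U$ witnesses thick Hausdorff sensitivity.

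The only point requiring care is the bookkeeping in the last paragraph: the block of ``bad times'' lives at parameters for $W'$ and must be translated back to parameters for $W$ by subtracting $sp$, which is why I arrange $m>sp$; this is harmless because thick sets contain arbitrarily late blocks of every length. The rest is the same $T_3$ separation argument as in Theorem \ref{T21}, but now applied simultaneously along an entire interval of times rather than at a single time.
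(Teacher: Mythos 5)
Your proof is correct and takes essentially the same route as the paper's: the same $T_3$ separation producing the two-element cover $\{V_1,\,X\setminus\overline{V_2}\}$, and the same transfer of a length-$k$ block of ``bad'' times from a neighborhood $W'$ of $x$ back to the arbitrary open set $W$ by subtracting a multiple of the period coming from transitivity of $g$ (Theorem \ref{T1}). The only cosmetic difference is that you obtain the blocks directly from thickness of the complement of the non-syndetic set $J_{f_{1,\infty}}(W',V_2;O)$, whereas the paper phrases that step through Lemma \ref{ls}; both amount to the same use of the hypothesis $(x,y)\notin\operatorname{SEqP}(X,f_{1,\infty})$.
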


\begin{proof}
Suppose $p$ is the period of \((X,f_{1,\infty})\), then \(g:=f^{\,p}_{1}=f_{p}\circ f_{p-1}\circ\cdots\circ f_{1}\). We show that $(X,f_{1,\infty})$ is thickly Hausdorff sensitive. Let \(x\) and \(y\) be as in the hypothesis. Then by Theorem \ref{T1}, $x\in \mathrm{Trans}(X,g)$. Since $(x,y)\notin \operatorname{SEqP}(X,f_{1,\infty})$, there exists a neighborhood \(O\in \mathcal{N}_y\) such that for every choice of neighborhoods \(U\in \mathcal{N}_x\) and \(V\in \mathcal{N}_y\), $J_{f_{1,\infty}}(U,V;O)$ is not syndetic and hence $\mathbb N \setminus J_{f_{1,\infty}}(U,V;O)$ is thick. So for any $L \in \mathbb N$, there exist $n_0$ such that for each $t\in \{n_0,n_0+1,....,n_0+L\}$, we have $f_1^t(U)\cap V\neq\emptyset \quad\text{and}\quad f_1^t(U)\not\subseteq O$.

Since \(X\) is $T_3$, we can choose open neighborhoods \(V_1\) and \(V_2\) of \(y\) satisfying $\overline{V}_1\subseteq O \quad \text{and} \quad \overline{V}_2\subseteq V_1$. Consider the finite open cover $\mathcal{U}=\{V_1,\, X\setminus \overline{V_2} \}$.
Now, let \(W\subseteq X\) be an arbitrary non-empty open set and $l\in \mathbb N$. Since \(x\in \mathrm{Trans}(X,f_{1,\infty})=\mathrm{Trans}(X,g)\), there exists an \(n\in\mathbb{N}\) such that $x \in W'= g^{-n}(W)$ and since $(x,y)\notin \operatorname{SEqP} (X,f_{1,\infty})$, by Lemma \ref{ls}, there exists an integer \(s>pn\) such that for $j\in\{s,s+1,...s+l\}$ we have $f_1^j(W')\cap V_2\neq\emptyset \quad\text{and}\quad f_1^j(W')\not\subseteq O$. Since $W' = g^{-n}(W)=(f_{1}^{pn})^{-1}(W)$, we have $ f_1^j(W') = f_1^{j-pn}(W)$, therefore, for \(k=j-pn\), we get that $f_1^k(W)\cap V_2\neq\emptyset \quad\text{and}\quad f_1^k(W)\not\subseteq O$. In particular, there exist  \(a,b\in W\) such that $f_1^k(a)\notin O  \text{ and } f_1^k(b)\in V_2$. Thus,
\[
\{f_1^k(a),f_1^k(b)\}\cap V_1=\{f_1^k(b)\} \quad \text{and} \quad \{f_1^k(a),f_1^k(b)\}\cap (X\setminus \overline{V_2})=\{f_1^k(a)\},
\]
which shows that the images of \(a\) and \(b\) lie in different members of the cover \(\mathcal{U}\) and hence $k \in N_{f_{1,\infty}}(W,\mathcal{U}).$ Therefore $\{s-pn, (s-pn)+1,...(s-pn)+l\}\subseteq N_{f_{1,\infty}}(W,\mathcal{U})$ and hence $N_{f_{1,\infty}}(W,\mathcal{U})$ is thick. Since \(W\) was arbitrary, it follows that \((X,f_{1,\infty})\) is thickly Hausdorff sensitive.
\end{proof}

We now have the following dichotomy that is analogous to Theorem \ref{T6}.

\begin{theorem}\label{TAY2}
Let \((X,f_{1,\infty})\) be a periodic non-autonomous dynamical system on a  $T_3$ space $X$ such \(\mathrm{Tran}(X,g)\neq\emptyset\). If \((X,f_{1,\infty})\) is minimal then it is either syndetically topologically equicontinuous or thickly Hausdorff sensitive.
\end{theorem}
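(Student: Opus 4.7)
The plan is to mirror exactly the structure of the proof of Theorem \ref{TAY1}, but substituting its two ingredients (Theorem \ref{TEQ} and Theorem \ref{T21}) with their syndetic/thick counterparts (Theorem \ref{STEq} and Theorem \ref{THs}) which have already been established. Since minimality of $(X,f_{1,\infty})$ forces $\mathrm{Trans}(X,f_{1,\infty})=X$, and by Theorem \ref{T1} we also have $\mathrm{Trans}(X,g)=\mathrm{Trans}(X,f_{1,\infty})=X$, every point of $X$ is a transitive point, so the two supporting theorems apply to every point.

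First I would split into two cases according to whether $\mathrm{STEq}(X,f_{1,\infty})$ is empty or not. In the first case, assume $\mathrm{STEq}(X,f_{1,\infty})\neq\emptyset$. By Theorem \ref{STEq}, we obtain $\mathrm{Trans}(X,f_{1,\infty})\subseteq \mathrm{STEq}(X,f_{1,\infty})$, and since minimality gives $\mathrm{Trans}(X,f_{1,\infty})=X$, it follows that $\mathrm{STEq}(X,f_{1,\infty})=X$, i.e., the system is syndetically topologically equicontinuous.

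In the second case, assume $\mathrm{STEq}(X,f_{1,\infty})=\emptyset$. Then for every $x\in X$ there exists some $y\in X$ such that $(x,y)\notin \mathrm{SEqP}(X,f_{1,\infty})$. Pick any $x\in X=\mathrm{Trans}(X,f_{1,\infty})$ (using minimality) together with a corresponding $y$; then the hypotheses of Theorem \ref{THs} are satisfied (the space $X$ is $T_3$, the system is periodic, $\mathrm{Trans}(X,g)\neq\emptyset$, $x\in \mathrm{Trans}(X,f_{1,\infty})$, and $(x,y)\notin \mathrm{SEqP}(X,f_{1,\infty})$). Applying Theorem \ref{THs} directly yields that $(X,f_{1,\infty})$ is thickly Hausdorff sensitive.

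The two cases are mutually exclusive and exhaustive, so the dichotomy follows. There is essentially no main obstacle here, since the heavy lifting has already been absorbed into Theorems \ref{STEq} and \ref{THs}; the only subtlety worth spelling out is that minimality is what lets us promote the conclusion $\mathrm{Trans}(X,f_{1,\infty})\subseteq \mathrm{STEq}(X,f_{1,\infty})$ to full syndetic topological equicontinuity on the whole space, and simultaneously guarantees that the point $x$ in Case 2 can be chosen in $\mathrm{Trans}(X,f_{1,\infty})$ so Theorem \ref{THs} becomes applicable.
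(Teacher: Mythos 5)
Your proposal is correct and follows essentially the same route as the paper: split on whether $\mathrm{STEq}(X,f_{1,\infty})$ is empty, use Theorem \ref{STEq} together with minimality to upgrade $\mathrm{Trans}(X,f_{1,\infty})\subseteq \mathrm{STEq}(X,f_{1,\infty})$ to equality with $X$ in the first case, and apply Theorem \ref{THs} to a transitive point paired with a non-syndetic-equicontinuity partner in the second. No gaps; your remark that minimality supplies both the promotion to the whole space and the availability of a transitive point for Theorem \ref{THs} matches the paper's argument.
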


\begin{proof}
Suppose \((X,f_{1,\infty})\) is minimal. There are two possibilities:

\medskip
\noindent\textbf{Case 1:} \(\mathrm{STEq}(X,f_{1,\infty})\neq \emptyset\). Then by Theorem \ref{STEq}, \(\mathrm{STEq}(X,f_{1,\infty})=X\). Hence, the system \((X,f_{1,\infty})\) is syndetically topologically equicontinuous.

\medskip
\noindent\textbf{Case 2:} Suppose that no point is topologically equicontinuous. In this case, for every \(x\in X\) there exists some \(y\in X\) such that 
$
(x,y)\notin \operatorname{SEqP}(X,f_{1,\infty}).
$
Then by the Theorem \ref{THs}, the system \((X,f_{1,\infty})\) is thickly Hausdorff sensitive. Hence, a minimal non-autonomous dynamical system \((X,f_{1,\infty})\) must be either syndetically topologically equicontinuous or thickly Hausdorff sensitive.
\end{proof}

Theorem \ref{HTS} shows that a periodic non-autonomous dynamcial system is thickly Hausdorff sensitive if and only if the induced autonomous system is Hausdorff sensitive.
\begin{theorem}\label{HTS}
    Let \((X,f_{1,\infty})\) be a periodic non-autonomous dynamical system on a Hausdorff space $X$ and let \((X,g)\) be its induced autonomous system. Then $(X,f_{1,\infty})$ is thickly Hausdorff sensitive if and only if $(X,g)$ is thickly Hausdorff sensitive.
\end{theorem}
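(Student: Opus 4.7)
The plan is to prove both implications by block-counting on orbit times, in the spirit of Theorems \ref{HS} and \ref{TSen} but with extra attention to the consecutiveness required by thickness. For the forward direction, I claim that if $(X, f_{1,\infty})$ is thickly Hausdorff sensitive with finite open cover $\mathcal V$, then $(X,g)$ is thickly Hausdorff sensitive with the \emph{same} cover $\mathcal V$. Given a non-empty open $W \subseteq X$ and $t \in \mathbb N$, thickness of $N_{f_{1,\infty}}(W,\mathcal V)$ produces $m_0$ such that $\{m_0, m_0+1, \dots, m_0+(t+1)p-1\} \subseteq N_{f_{1,\infty}}(W,\mathcal V)$. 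Any interval of $(t+1)p$ consecutive integers contains precisely $t+1$ consecutive multiples of $p$, say $q_0 p,(q_0+1)p,\dots,(q_0+t)p$, and because $f_1^{qp}=g^q$, the relation $qp \in N_{f_{1,\infty}}(W,\mathcal V)$ is equivalent to $q \in N_g(W,\mathcal V)$. Thus $\{q_0,q_0+1,\dots,q_0+t\}\subseteq N_g(W,\mathcal V)$, establishing thickness for $g$ with the same cover.

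For the converse, I would exploit the periodic splitting
\[
g^q \;=\; H_r \circ f_1^{qp-r}, \qquad H_r \;:=\; f_p \circ f_{p-1}\circ\cdots\circ f_{p-r+1},
\]
valid for every $r\in\{0,1,\dots,p-1\}$ and $q\geq 1$ (with the convention $H_0 = \mathrm{id}$); this identity follows from the index-shift $f_{qp+i}=f_i$. Given a thick Hausdorff sensitivity cover $\mathcal U$ for $(X,g)$, set $\mathcal U^{(r)} := \{H_r^{-1}(V) : V \in \mathcal U\}$, a finite open cover of $X$ by continuity of $H_r$, and let $\mathcal U' := \mathcal U^{(0)} \vee \mathcal U^{(1)} \vee \cdots \vee \mathcal U^{(p-1)}$ be the common refinement. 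Every $U' \in \mathcal U'$ then satisfies $U' \subseteq H_r^{-1}(V_r)$ for some $V_r \in \mathcal U$, for every $r$ simultaneously, and $\mathcal U'$ is the candidate witness cover for $(X, f_{1,\infty})$.

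To finish, fix a non-empty open $W$ and $t\in\mathbb N$, and use thickness of $N_g(W,\mathcal U)$ to produce $q_0 \geq 1$ with $\{q_0,\dots,q_0+t\}\subseteq N_g(W,\mathcal U)$. For each $q$ in this range choose $u_q,v_q\in W$ with no single element of $\mathcal U$ containing both $g^q(u_q)$ and $g^q(v_q)$. If $\{f_1^{qp-r}(u_q),\,f_1^{qp-r}(v_q)\}\subseteq U'$ for some $r \in \{0,\dots,p-1\}$ and some $U'\in\mathcal U'$, then $U'\subseteq H_r^{-1}(V_r)$ combined with $g^q=H_r\circ f_1^{qp-r}$ would force $\{g^q(u_q),g^q(v_q)\}\subseteq V_r$, a contradiction. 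Hence $qp-r \in N_{f_{1,\infty}}(W,\mathcal U')$ for every admissible pair $(q,r)$, and as $r$ sweeps $\{0,\dots,p-1\}$ and $q$ sweeps $\{q_0,\dots,q_0+t\}$ these indices exhaust the contiguous block $\{(q_0-1)p+1, \dots, (q_0+t)p\}$ of length $(t+1)p$ inside $N_{f_{1,\infty}}(W,\mathcal U')$. The main obstacle is designing the refined cover so that the $r$-dependent containments stitch together into one contiguous block, which is exactly what forces the common-refinement construction; verifying the splitting identity and the covering property of $\mathcal U'$ is then a direct computation.
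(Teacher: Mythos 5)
Your proof is correct, and in substance it follows the same route as the paper: the direction from $(X,f_{1,\infty})$ to $(X,g)$ is handled with the same cover by extracting the $t+1$ consecutive multiples of $p$ from a long run (exactly as in the paper), and the converse uses the same refined cover, since your maps $H_r=f_p\circ\cdots\circ f_{p-r+1}$ are, up to reindexing the remainder, the tail compositions $f_p\circ\cdots\circ f_{r+1}$ whose preimages the paper joins into $\mathcal U_0\vee\cdots\vee\mathcal U_{p-1}$. The one genuine difference is organizational: the paper proves the converse by contradiction, passing through syndeticity of $\mathbb N\setminus N_{f_{1,\infty}}(W,\mathcal U)$, whereas you argue directly, showing that a run $\{q_0,\dots,q_0+t\}\subseteq N_g(W,\mathcal U)$ forces the explicit contiguous block $\{(q_0-1)p+1,\dots,(q_0+t)p\}$ into $N_{f_{1,\infty}}(W,\mathcal U')$. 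Your version has two small advantages: it exhibits the thick blocks constructively, and it works at the level of the witness pairs $u_q,v_q$, which matches the definition of $N_{f_{1,\infty}}$ exactly; the paper's contrapositive step reads non-membership of an index as ``$f_1^i(W)\subseteq U$ for a single $U\in\mathcal U$,'' which is formally stronger than the literal negation (the common cover element may depend on the pair), though its argument is easily repaired pairwise, precisely in the way your proof does. The paper's phrasing is slightly shorter in bookkeeping; yours is sharper and self-contained.
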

\begin{proof}
    Suppose $p$ is the period of \((X,f_{1,\infty})\), then \(g:=f^{\,p}_{1}=f_{p}\circ f_{p-1}\circ\cdots\circ f_{1}\). We first show that if $(X,g)$ is thickly sensitive then \((X,f_{1,\infty})\) is thickly sensitive.
Suppose \((X,g)\) is thickly sensitive with thick sensitive finite open cover \(\mathcal U_0\) and assume that  \((X,f_{1,\infty})\) is not thickly sensitive.  For \(r\in \{1,\dots,p-1\}\) define
\[
\mathcal U_r
=\bigl\{\,(f_{p}\circ f_{p-1}\circ\cdots\circ f_{r+1})^{-1}(V)=(f_{r+1}^{p-r})^{-1}(V):V\in\mathcal U_0\bigr\},
\]
which is again a finite open cover of \(X\).  Let
\(\displaystyle
\mathcal U
=\mathcal U_0\vee\mathcal U_1\vee\cdots\vee\mathcal U_{p-1}
\)
be their common refinement. Then $\mathcal{U}$ is a finite open cover for $X$. Since  \((X,f_{1,\infty})\) is not thickly Hausdorff sensitive, for $\mathcal{U}$, there exists a non-empty open set $W$ of $X$ such that $N_{f_{1,\infty}}(W,\mathcal{U})$ is not thick and therefore $\mathbb N \setminus N_{f_{1,\infty}}(W,\mathcal{U})$ is syndetic, implying that there exists an $L\in \mathbb N$ such that for each $n\in \mathbb N$, there exists an $i\in \{n,n+1,...,n+L\}$ such that $f_1^i(W)\subseteq U$ for some $U\in \mathcal{U}$.\\
Since $(X,g)$ is thickly Hausdorff sensitive, $N_g(W,\mathcal{U}_0)$ is thick. Choose $k\in\mathbb N$ with $kp>L$, then there exists an $n_0\in \mathbb N$ such that 
\[ \{ n_0, n_0+1,...,n_0+k\} \subseteq N_g(W,\mathcal{U}_0). \]\\
Consider the block $ \{ n_0p, n_0p+1,...,n_0p+L\}$, then there exists a $j\in \{ n_0p, n_0p+1,...,n_0p+L\}$ such that \[f_1^j(W)\subseteq U=\bigl (U_0\; \cap \; \bigcap_{r=1}^{p-1}(f_{r+1}^{p-r})^{-1}(U_r) \bigr )\]
Let $j=pq+r$, with $\ 0\leq r<p $ and $n_0\leq q \leq (n_0+k-1)$. If $j$ is a multiple of $p$, then $j=pq$ and $q\in N_{f_{1,\infty}}(W,\mathcal{U_0})$ therefore $f_1^j(W)=g^q(W) \subseteq U_0$ which is a contradiction.\\
If $r \not= 0$ then $f_1^j(W) \subseteq U\subseteq (f_{p}\circ f_{p-1}\circ\cdots\circ f_{r+1})^{-1}(U_r)$, therefore $g^{q+1}(W)=f_{p}\circ f_{p-1}\circ\cdots\circ f_{r+1}(f_1^r(g^q(W)))\subseteq Ur$, which is a contradiction as $q+1\in \{ n_0, n_0+1,...,n_0+k\}\subseteq N_g(W,\mathcal{U}_0) $.\\
Hence, if $(X,g)$ is thickly Hausdorff sensitive then $(X,f_{1,\infty})$ is thickly Hausdorff sensitive.\\
Now Assume that \((X,f_{1,\infty})\) is thickly Hausdorff sensitive with a finite open cover \(\mathcal{D}\). We will show that \((X,g)\) is thickly sensitive with the same \(\mathcal{D}\). Let \(V\subseteq X\) be any non-empty open set and fix an arbitrary \(t\in\mathbb N\). Choose   \(k\in\mathbb{N}\) with $k>(t+1)\,p$. Since \((X,f_{1,\infty})\) is thickly Hausdorff sensitive, there exists an $m_0\in \mathbb N$ such that
$
\{\,m_0,\;m_0+1,\;\dots,\;m_0+k\}
\;\subseteq\;
N_{f_{1.\infty}}(V,\mathcal{D})
$
. We can write
$
m_0 \;=\; q\,p \;+\; r,
\
0 \;\le\; r < p
$
, then $\{(q+1)p,\;(q+2)p,\;\dots,\;(q+1+t)p\}\subseteq N_{f_{1.\infty}}(V,\mathcal{D})$. But $f_1^{\,(q+j)p} \;=\; g^{\,q+j}$, so $\{\,q+1,\;q+2,\;\dots,\;q+1+t\}
\;\subseteq\;
N_g(V,\mathcal{D})$.

Since \(t\) and $V$ were arbitrary, therefore \((X,g)\) is thickly Hausdorff sensitive with sensitive open cover \(\mathcal D\).

\end{proof}
Theorem \ref{hmsts} is analogous to \cite[Theorem 3.2]{huang2018analogues} and provides sufficient conditions under which thick Hausdorff sensitivity and multi-Hausdorff sensitivity are equivalent for autonomous dynamical systems. We use it to establish a similar result for periodic non-autonomous dynamical systems in Theorem \ref{Tpms}.

\begin{theorem}\label{hmsts}
    Let \((X,f)\) be an autonomous system on a Hausdorff space \(X\). If \((X,f)\) is multi-Hausdorff sensitive then it is thickly Hausdorff sensitive. If $\mathrm{Trans}(X,f)\neq\emptyset$, then thick Hausdorff sensitivity of \((X,f)\) implies multi-Hausdorff sensitivity.
\end{theorem}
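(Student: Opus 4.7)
The plan is to adapt the argument of \cite[Theorem 3.2]{huang2018analogues}, already ported to uniform spaces in Theorem \ref{msts}, by working throughout with finite open covers in place of entourages. The two implications are handled independently.

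For the forward implication, I would show that any multi-Hausdorff sensitivity cover $\mathcal{U}$ is itself a thick-sensitivity cover. Fix a non-empty open $V$ and $k \in \mathbb{N}$; the aim is to find a block of length $k+1$ in $N_f(V, \mathcal{U})$. The key idea is to apply multi-Hausdorff sensitivity to the $k+1$ preimages $f^{-j}(V)$ for $j = 0, 1, \ldots, k$, producing a common $n \in \bigcap_{j=0}^{k} N_f(f^{-j}(V), \mathcal{U})$ chosen with $n \geq k$. For each $j$, extract $x_j, y_j \in f^{-j}(V)$ such that $f^n(x_j), f^n(y_j)$ are not in a common element of $\mathcal{U}$. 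Setting $a_j := f^j(x_j)$ and $b_j := f^j(y_j)$, which both lie in $V$, the identity $f^{n-j}(a_j) = f^n(x_j)$ (and similarly for $b_j$) certifies that $n - j \in N_f(V, \mathcal{U})$ for every $j$, yielding the contiguous block $\{n - k, \ldots, n\} \subseteq N_f(V, \mathcal{U})$ as required.

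For the reverse implication, fix $x \in \mathrm{Trans}(X, f)$, let $\mathcal{U}$ be a thick-sensitivity cover, and let $V_1, \ldots, V_m$ be arbitrary non-empty open sets. Density of the orbit of $x$ supplies $k_i \in \mathbb{N}$ with $f^{k_i}(x) \in V_i$; continuity of each $f^{k_i}$ produces a neighborhood $W_i$ of $x$ with $f^{k_i}(W_i) \subseteq V_i$, and I set $W := \bigcap_{i=1}^m W_i$, a non-empty open neighborhood of $x$. Thick sensitivity applied to $W$ delivers a block $\{n_0, n_0 + 1, \ldots, n_0 + M\} \subseteq N_f(W, \mathcal{U})$ with $M := \max_i k_i$. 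For each $i$, the index $n_0 + k_i$ sits in this block, so there exist $u_i, v_i \in W$ with $f^{n_0 + k_i}(u_i)$ and $f^{n_0 + k_i}(v_i)$ lying in no common element of $\mathcal{U}$; then $p_i := f^{k_i}(u_i)$ and $q_i := f^{k_i}(v_i)$ belong to $V_i$ and witness that $n_0 \in N_f(V_i, \mathcal{U})$. Consequently $n_0 \in \bigcap_{i=1}^m N_f(V_i, \mathcal{U})$, so $\mathcal{U}$ is a multi-Hausdorff sensitivity cover.

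The main obstacle lies in the forward direction: one must verify both that each preimage $f^{-j}(V)$ is non-empty (automatic when $f$ is surjective, as is typical in the multi-sensitive setting) and, more delicately, that the common $n$ furnished by multi-sensitivity can be chosen with $n \geq k$ so that the shifted indices $n - j$ are legitimate natural numbers. The latter requires a separate combinatorial step, most naturally by enlarging the family of open sets to which multi-sensitivity is applied so as to force $n$ to exceed $k$; this parallels the technique in the metric-space proof of \cite[Theorem 3.2]{huang2018analogues} and is the place where the argument needs its most careful treatment.
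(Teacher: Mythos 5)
Your reverse implication is correct and is essentially the paper's argument: pick a transitive point, use continuity to find a single neighborhood $W$ whose images $f^{k_i}(W)$ sit inside the $V_i$, apply thickness to $W$ to get a long block, and shift back to land a common time in $\bigcap_i N_f(V_i,\mathcal U)$. No issues there (apart from the non-emptiness of preimages in the other direction, which the paper also glosses over).

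The forward implication, however, has a genuine gap, and it is exactly the step you defer. Applying multi-Hausdorff sensitivity to the raw preimages $f^{-j}(V)$, $j=0,\dots,k$, only yields \emph{some} common time $n$; nothing in the definition lets you ``choose'' $n\geq k$, and your suggested remedy (``enlarging the family of open sets'') is left unexplained -- adding arbitrary extra open sets does not force the common time to be large. The missing idea, which is the heart of the paper's proof, is to \emph{shrink} each preimage using the cover itself: for $j\in\{1,\dots,k\}$ pick $x_j\in f^{-j}(V)$, note that $f^{j}(x_j)$ lies in some member $V_j\in\mathcal U$, and by continuity choose an open $U_j\subseteq f^{-j}(V)$ containing $x_j$ with $f^{j}(U_j)\subseteq V_j$. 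Since $f^{j}(U_j)$ is contained in a single member of the cover, $j\notin N_f(U_j,\mathcal U)$; hence any $s\in\bigcap_{j=1}^{k}N_f(U_j,\mathcal U)$ supplied by multi-sensitivity automatically satisfies $s>k$, and because $U_j\subseteq f^{-j}(V)$ the shift argument you already describe gives $\{s-k,\dots,s-1\}\subseteq N_f(V,\mathcal U)$, proving thickness. Without this construction (or some substitute establishing the lower bound on $n$), your forward direction does not go through, since the block $\{n-k,\dots,n\}$ may not consist of natural numbers.
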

\begin{proof}
    First Suppose that $(X,f)$ is  multi-Hausdorff sensitive with the multi-sensitivity finite cover $\mathcal{U}$. Let $k \in \mathbb{N}$ and $U$ be a non-empty open subset of $X$. Let $i \in \{1,2,...,k\}$, then $f^{-i}(U)\neq \emptyset$. Suppose $x_i\in f^{-i}(U)$ and $f^{i}(x_i) \in V_i$, where $V_i\in \mathcal{U}$. By continuity of $f^i$, there exists an open set $U_i\subseteq X$ such that $f^i(U_i)\in V_i$. Let $s \in \bigcap_{i=1}^k N_f(U_i, \mathcal{U})$ then by construction $s > k$ and $s \in \bigcap_{i=1}^k N_f(f^{-i}(U), \mathcal{U})$ and therefore $\{s-k,s-k+1,..., s-1,s\} \subseteq N_f(U, \mathcal{U})$. This shows $N_f(U, \mathcal{U})$ is thick and $(X,f)$ is thickly Hausdorff sensitive.

    Now assume that $(X,f)$ is thickly Hausdorff sensitive  with thick sensitivity finite cover $\mathcal{U}$ and $\mathrm{Trans}(X,f)\neq\emptyset$. Let $m\in \mathbb N$ and $V_1,V_2...,V_m$ be a collection of non-empty open subsets of $X$. Suppose $x\in\mathrm{Tran}(X,f)$, then there exists an $n_i\in \mathbb N$ such that $f^{n_i}(x)\in V_i$ for each $i\in \{1,2,...,m\}$. By continuity of $f$, there exists a $V\in\mathcal{N}_x$ such that $f^i(V)\subseteq V_i$ for each $i\in \{1,2,...,m\}$. By our assumption $(X,f)$ is thickly Hausdorff sensitive and therefore there exists a $t\in \mathbb N$ such that  $\{t, t+1,...,t+n_1+n_2+...+n_m\}\subseteq N_f(V, \mathcal{U}) $ and hence $t \in \bigcap_{i=1}^m N_f(V_i, \mathcal{U})$. Thus $(X,f)$ is multi-Hausdorff sensitive.
\end{proof}

\begin{theorem}\label{Tpms}
Let \((X,f_{1,\infty})\) be a periodic nonautonomous system on a Hausdorff space \(X\) such that $\mathrm{Tran}(X,g)\neq\emptyset$. Then \((X,f_{1,\infty})\) is thickly Hausdorff sensitive if and only if \((X,f_{1,\infty})\) is multi-Hausdorff sensitive.
\end{theorem}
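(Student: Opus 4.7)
The plan is to reduce the desired equivalence to the autonomous case handled by Theorem \ref{hmsts}, bridging via Theorem \ref{HTS}, and filling in the missing link by transferring multi-Hausdorff sensitivity between \((X,f_{1,\infty})\) and \((X,g)\). More precisely, I would establish the chain of equivalences
\[
(X,f_{1,\infty})\text{ thickly HS}
\;\overset{\ref{HTS}}{\Longleftrightarrow}\;
(X,g)\text{ thickly HS}
\;\overset{\ref{hmsts}}{\Longleftrightarrow}\;
(X,g)\text{ multi HS}
\;\Longleftrightarrow\;
(X,f_{1,\infty})\text{ multi HS},
\]
where the middle step uses the hypothesis \(\mathrm{Trans}(X,g)\neq\emptyset\) to get both directions of Theorem \ref{hmsts}.

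The easy half of the last equivalence — \((X,g)\) multi-HS implies \((X,f_{1,\infty})\) multi-HS — I would do first with the same cover \(\mathcal{U}\): for any non-empty open \(V_1,\dots,V_k\), a witness \(m\in\bigcap_{i=1}^k N_g(V_i,\mathcal{U})\) yields \(mp\in\bigcap_{i=1}^k N_{f_{1,\infty}}(V_i,\mathcal{U})\) because \(g^m=f_1^{mp}\).

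The harder half — \((X,f_{1,\infty})\) multi-HS implies \((X,g)\) multi-HS — is the main technical step. Given a multi-Hausdorff sensitivity cover \(\mathcal{D}\) for \((X,f_{1,\infty})\), I would define the common refinement
\[
\mathcal{U} \;=\; \bigvee_{r=0}^{p-1}\bigl(f_1^r\bigr)^{-1}(\mathcal{D}),
\]
which is a finite open cover of \(X\) by continuity of each \(f_1^r\). For arbitrary non-empty open \(V_1,\dots,V_k\), pick \(m\in\bigcap_{i=1}^k N_{f_{1,\infty}}(V_i,\mathcal{D})\) and write \(m=qp+r\) with \(0\le r<p\). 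For each \(i\) there exist \(u_i,v_i\in V_i\) such that \((f_1^m(u_i),f_1^m(v_i))=(f_1^r(g^q(u_i)),f_1^r(g^q(v_i)))\) lies in no single member of \(\mathcal{D}\). If some \(U\in\mathcal{U}\) contained both \(g^q(u_i)\) and \(g^q(v_i)\), then its \((f_1^r)^{-1}(\mathcal{D})\)-component would force \(f_1^m(u_i),f_1^m(v_i)\) into a single element of \(\mathcal{D}\), a contradiction. Hence \(q\in\bigcap_{i=1}^k N_g(V_i,\mathcal{U})\), showing \((X,g)\) is multi-Hausdorff sensitive.

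The main obstacle is precisely this refinement-cover construction. The pitfall that \(f_1^{qp+r}\) need not factor nicely in a general non-autonomous system is resolved here only because periodicity gives \(f_1^{qp+r}=f_1^r\circ g^q\); once this factorization is exploited, the refinement \(\bigvee_{r=0}^{p-1}(f_1^r)^{-1}(\mathcal{D})\) is exactly what makes the witnesses for \(f_1^m\) descend to witnesses for \(g^q\). All remaining steps are routine applications of Theorems \ref{HTS} and \ref{hmsts}, mirroring the strategy used in Theorem \ref{TMS}.
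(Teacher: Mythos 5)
Your proposal follows essentially the same route as the paper: both sides are reduced to the induced system $(X,g)$ via Theorem \ref{HTS} and Theorem \ref{hmsts}, with the remaining step being the transfer of multi-Hausdorff sensitivity between $(X,f_{1,\infty})$ and $(X,g)$, where your refinement cover $\bigvee_{r=0}^{p-1}(f_1^{r})^{-1}(\mathcal{D})$ is exactly the device used in the paper's proof of Theorem \ref{HS} (which the paper merely cites at this point, so your writing it out is welcome). The one detail to add in your harder half is that the witness $m\in\bigcap_{i}N_{f_{1,\infty}}(V_i,\mathcal{D})$ must satisfy $m\ge p$ so that $q\ge 1$ (since $0\notin\mathbb{N}$); this is easily arranged by first shrinking each $V_i$ to a nonempty open $V_i'=V_i\cap\bigcap_{j=1}^{p-1}f_1^{-j}(D_{i,j})$ around a chosen point, with $f_1^{j}(x_i)\in D_{i,j}\in\mathcal{D}$, so that $N_{f_{1,\infty}}(V_i',\mathcal{D})\subseteq N_{f_{1,\infty}}(V_i,\mathcal{D})$ contains no $j<p$, and then applying multi-sensitivity to the $V_i'$.
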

\begin{proof}
First assume that \((X,f_{1,\infty})\) is multi-Hausdorff sensitive, then as shown in Theorem \ref{HS}, $(X,g)$ is multi-Hausdorff sensitive and therefore by Theorem \ref{hmsts}, $(X,g)$ is thickly Hausdorff sensitive and hence by Theorem \ref{HTS} \((X,f_{1,\infty})\) is thickly Hausdorff sensitive.\\
Now, assume that \((X,f_{1,\infty})\) is thickly Hausdorff sensitive, then by Theorem \ref{HTS} $(X,g)$ is thickly sensitive and by Theorem \ref{hmsts}, $(X,g)$ is multi-Hausdorff sensitive and hence \((X,f_{1,\infty})\) is multi-Hausdorff sensitive.
\end{proof}

\begin{corollary}
    Let \((X,f_{1,\infty})\) be a periodic non-autonomous dynamical system on a $T_3$ space $X$ such that \(\mathrm{Tran}(X,g)\neq\emptyset\). If \((X,f_{1,\infty})\) is minimal then it is either syndetically topologically equicontinuous or multi-Hausdorff sensitive.
\end{corollary}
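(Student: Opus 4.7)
The plan is to derive this corollary as a direct consequence of two results already established in the section, namely Theorem \ref{TAY2} and Theorem \ref{Tpms}. Both of these hypotheses (periodicity, $T_3$, and $\mathrm{Tran}(X,g)\neq\emptyset$) are exactly what feeds these two theorems, so no further conditions need to be verified.

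First I would apply Theorem \ref{TAY2} to the minimal system $(X,f_{1,\infty})$ on the $T_3$ space $X$ with $\mathrm{Tran}(X,g)\neq\emptyset$. This immediately yields the dichotomy that $(X,f_{1,\infty})$ is either syndetically topologically equicontinuous, in which case we are done, or thickly Hausdorff sensitive. In the latter case, I would invoke Theorem \ref{Tpms}, whose hypotheses (periodic, Hausdorff, $\mathrm{Tran}(X,g)\neq\emptyset$) are satisfied since $T_3$ implies Hausdorff. Theorem \ref{Tpms} gives the equivalence between thick Hausdorff sensitivity and multi-Hausdorff sensitivity for such systems, so thick Hausdorff sensitivity upgrades to multi-Hausdorff sensitivity, completing the dichotomy.

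There is no significant obstacle here; the proof is essentially a two-line combination of prior theorems. The only thing to be careful about is ensuring the hypotheses line up: $T_3 \Rightarrow$ Hausdorff is the one implicit step that lets Theorem \ref{Tpms} apply, and the assumption $\mathrm{Tran}(X,g)\neq\emptyset$ must be carried forward to invoke Theorem \ref{Tpms} (it is essential there, since the direction thick $\Rightarrow$ multi requires a transitive point in the induced autonomous system via Theorem \ref{hmsts}). I would write the proof as two short sentences citing the two theorems in order.
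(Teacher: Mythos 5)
Your proposal is correct and matches the paper's intended argument exactly: the corollary is obtained by combining the dichotomy of Theorem \ref{TAY2} with the equivalence of thick Hausdorff sensitivity and multi-Hausdorff sensitivity from Theorem \ref{Tpms} (mirroring how Corollary \ref{AYms} follows from Theorems \ref{T6} and \ref{TMS}). Your attention to the implicit step that $T_3$ implies Hausdorff and that $\mathrm{Tran}(X,g)\neq\emptyset$ must be carried into Theorem \ref{Tpms} is appropriate and complete.
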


\section*{Acknowledgment}
The first author is supported by Junior Research Fellowship, E-certificate No. $24J/01/00293$ and CSIR-HRDG Ref. No: June-24(ii)/EU-V, for carrying out doctoral work.

\end{document}